\newtheorem{theorem}{Theorem}[section]
\newtheorem{lemma}[theorem]{Lemma}
\newtheorem{corollary}[theorem]{Corollary}
\newtheorem{proposition}[theorem]{Proposition}
\theoremstyle{definition}
\newtheorem{definition}[theorem]{Definition}
\theoremstyle{remark}
\newtheorem{remark}[theorem]{Remark}
\numberwithin{equation}{section}
\begin{document}
\tikzset{->-/.style={decoration={
  markings,
  mark=at position #1 with {\arrow{>}}},postaction={decorate}}}

\tikzset{-<-/.style={decoration={
  markings,
  mark=at position #1 with {\arrow{<}}},postaction={decorate}}}

\title[Monodromy of the $SL(n)$ and $GL(n)$ Hitchin fibrations]{Monodromy of the $SL(n)$ and $GL(n)$ Hitchin fibrations}
\author{David Baraglia}

\address{School of Mathematical Sciences, The University of Adelaide, Adelaide SA 5005, Australia}
\email{david.baraglia@adelaide.edu.au}

\begin{abstract}
We compute the monodromy of the Hitchin fibration for the moduli space of $L$-twisted $SL(n,\mathbb{C})$ and $GL(n,\mathbb{C})$-Higgs bundles for any $n$, on a compact Riemann surface of genus $g>1$. We require the line bundle $L$ to either be the canonical bundle or satisfy $deg(L) > 2g-2$. The monodromy group is generated by Picard-Lefschetz transformations associated to vanishing cycles of singular spectral curves. We construct such vanishing cycles explicitly and use this to show that the $SL(n,\mathbb{C})$ monodromy group is a {\em skew-symmetric vanishing lattice} in the sense of Janssen. Using the classification of vanishing lattices over $\mathbb{Z}$, we completely determine the structure of the monodromy groups of the $SL(n,\mathbb{C})$ and $GL(n,\mathbb{C})$ Hitchin fibrations. As an application we determine the image of the restriction map from the cohomology of the moduli space of Higgs bundles to the cohomology of a non-singular fibre of the Hitchin fibration.
\end{abstract}

%\thanks{This work is supported by the Australian Research Council grant DE160100024}

\subjclass[2010]{Primary 14H60 53C07; Secondary 14H70, 14D05}

% You can replace \today by manually entering the date (also you can manually enter to put date in format Day Month Year)
%\date{\today}

% \keywords{Differential geometry, algebraic geometry}

\maketitle

%%%%%%%%%%%%%%%%%%%%%%%%%%%%%%%%%%%%%%%%%%%%%%%%%%%%%%%%%%%%%%%%%%%%%%%%%%%%%%%%
%%%%%%%%%%%%%%%%%%%%%%%%%%%%%%%%%%%%%%%%%%%%%%%%%%%%%%%%%%%%%%%%%%%%%%%%%%%%%%%%
%%%%%%%%%%%%%%%%%%%%%%%%%%%%%%%%%%%%%%%%%%%%%%%%%%%%%%%%%%%%%%%%%%%%%%%%%%%%%%%%
%%%%%%%%%%%%%%%%%%%%%%%%%%%%%%%%%%%%%%%%%%%%%%%%%%%%%%%%%%%%%%%%%%%%%%%%%%%%%%%%

%%%%%%%%%%%%%%%%%%%%%%%%%%%%%%%%%%%%%%%%%%%
%%%%%%%%%%%%%%%%%%%%%%%%%%%%%%%%%%%%%%%%%%%
% SECTION
%%%%%%%%%%%%%%%%%%%%%%%%%%%%%%%%%%%%%%%%%%%
%%%%%%%%%%%%%%%%%%%%%%%%%%%%%%%%%%%%%%%%%%%
\section{Introduction}

\subsection{Monodromy of the Hitchin fibration}\label{secmonohit}
In this paper we determine the monodromy of the $SL(n,\mathbb{C})$-Hitchin fibration $h : \mathcal{M}(n,L) \to \mathcal{A}$ for all $n$. Here $\mathcal{M}(n,L)$ is the moduli space of $L$-twisted $SL(n,\mathbb{C})$-Higgs bundles on a compact Riemann surface $\Sigma$ of genus $g > 1$ and $L$ is a line bundle which is either the canonical bundle $K$, or satisfies $l = deg(L) > 2g-2$. Our proof also gives the monodromy of the corresponding $GL(n,\mathbb{C})$-Hitchin fibration. For $SL(2,\mathbb{C})$, the monodromy was first determined in \cite{cop} using a combinatorial approach and later revisited in \cite{bs1} from a more geometric point of view. It does not seem possible to extend the arguments used in \cite{cop}, \cite{bs1} to rank $n > 2$. In this paper we introduce new techniques that apply for all $n$. When $n > 2$, our results are completely new.\\

Recall that the Hitchin fibration is a proper, surjective holomorphic map $h : \mathcal{M}(n,L) \to \mathcal{A}$ from $\mathcal{M}(n,L)$ to the affine space $\mathcal{A} = \bigoplus_{j=2}^n H^0(\Sigma , L^j)$ \cite{hit1,hit2,nit}. As shown by Hitchin \cite{hit2}, when $L = K$ the Hitchin map gives $\mathcal{M}(n,L)$ the structure of an algebraically completely integrable system with respect to a natural holomorphic symplectic structure on $\mathcal{M}(n,L)$. For $L \neq K$, we do not have a holomorphic symplectic structure, but it remains the case that the non-singular fibres of the Hitchin map are abelian varieties. Finding the monodromy of this fibration is a natural problem which has a number of important applications, some of which are described in \textsection \ref{secapp}.\\

To explain our results we will mainly focus on the $SL(n,\mathbb{C})$ case. The $GL(n,\mathbb{C})$ case is covered by Theorem \ref{thmglmon}. Let $\mathcal{D} \subset \mathcal{A}$ denote the locus of singular fibres of the $SL(n,\mathbb{C})$-Hitchin fibration and let $\mathcal{A}^{\rm reg} = \mathcal{A} \setminus \mathcal{D}$ be the regular locus. Let $\mathcal{M}^{\rm reg}$ be the points of $\mathcal{M}$ lying over $\mathcal{A}^{\rm reg}$, so that $h : \mathcal{M}^{\rm reg} \to \mathcal{A}^{\rm reg}$ is a non-singular bundle of abelian varieties. The monodromy of the $SL(n,\mathbb{C})$-Hitchin system is the local system $\underline{\Lambda}$ on $\mathcal{A}^{\rm reg}$ whose fibre over a point $a \in \mathcal{A}^{\rm reg}$ is the underlying lattice $\underline{\Lambda}_a = H_1( h^{-1}(a) , \mathbb{Z} )$ of the abelian variety $h^{-1}(a)$. Equivalently $\underline{\Lambda}$ is the dual of the Gauss-Manin local system $R^1 h_* \mathbb{Z}$. Choose a basepoint $a_0 \in \mathcal{A}^{\rm reg}$ and let $\Lambda_P$ be the fibre of $\underline{\Lambda}$ over $a_0$ (we use subscript $P$ because $\Lambda_P$ is the lattice of a Prym variety, see \textsection \ref{sec2}). The local system $\underline{\Lambda}$ is then equivalent to a representation $\rho_{SL} : \pi_1( \mathcal{A}^{\rm reg} , a_0) \to Aut( \Lambda_P)$. We call $\rho_{SL}$ the {\em monodromy representation} of the Hitchin fibration. The image $\Gamma_{SL} \subseteq Aut( \Lambda_P )$ of $\rho_{SL}$ will be called the {\em monodromy group} of the $SL(n,\mathbb{C})$-Hitchin fibration. The smooth fibres of the Hitchin fibration are Prym varieties associated to certain branched covers of $\Sigma$ called {\em spectral curves}, recalled in \textsection \ref{sec2}. This construction shows that the smooth fibres are equipped with a natural polarization, which defines a non-degenerate skew-symmetric bilinear pairing $\langle \; , \; \rangle : \Lambda_P \times \Lambda_P \to \mathbb{Z}$ invariant under the monodromy representation. Thus the monodromy of the $SL(n,\mathbb{C})$-Hitchin fibration is given by a triple $( \Lambda_P , \langle \; , \; \rangle , \Gamma_{SL})$, consisting of:
\begin{itemize}
\item[(i)]{a lattice $\Lambda_P$,}
\item[(ii)]{a skew-symmetric $\mathbb{Z}$-valued bilinear form $\langle \; , \; \rangle$ on $\Lambda_P$,}
\item[(iii)]{a subgroup $\Gamma_{SL} \subset Aut(\Lambda_P)$ preserving $\langle \; , \; \rangle$.}
\end{itemize}

%%%%%%%%%%%%%%%%%%%%%%%%%%%
\subsection{Vanishing cycles}

The monodromy group is generated by a certain collection of vanishing cycles. To describe these vanishing cycles, we choose a basepoint $a_0 \in \mathcal{A}^{\rm reg}$ of the form $a_0 = (0,0, \dots , 0 , a_n)$, where $a_n \in H^0(\Sigma , L^n)$ has only simple zeros. Let $tot(L)$ denote the total space of $L$ and $\pi : tot(L) \to \Sigma$ the projection. The spectral curve $S$ associated to $a_0$ is given by:
\begin{equation*}
S = \{ \lambda \in tot(L) \; | \; \lambda^n + a_n(\pi(\lambda)) = 0 \} \subset tot(L).
\end{equation*}
Spectral curves of this form carry a Galois action of the cyclic group $\mathbb{Z}_n$ by deck transformations, and will thus be referred to as {\em cyclic spectral curves}. One of the key insights of this paper is that the monodromy of the Hitchin fibration can be computed by studying cyclic spectral curves and small perturbations of them.\\

Given a cyclic spectral curve $S$, we construct vanishing cycles as follows. Let $\pi : S \to \Sigma$ denote the restriction of $\pi$ to $S$. Then $\pi : S \to \Sigma$ is a degree $n$ branched cover of $\Sigma$, branched over the zeros $b_1,b_2, \dots , b_k$ of $a_n$, where $k = nl$. Let $u_1, \dots , u_k \in S$ be the corresponding ramification points in $S$. The spectral curve construction identifies the lattice $\Lambda_P$ with the kernel of the Gysin homomorphism $\pi_* : H^1(S , \mathbb{Z}) \to H^1(\Sigma , \mathbb{Z})$. Suppose that $\gamma : [0,1] \to \Sigma$ is an embedded path in $\Sigma$ joining $b_i$ to $b_j$, where $i \neq j$ and assume that $\gamma$ does not meet any other branch point. Then $\pi^{-1}( \gamma( [0,1] ) ) \subset S$ consists of $n$ paths $\gamma^1 , \gamma^2 , \dots , \gamma^n : [0,1] \to S$ joining $u_i$ to $u_j$. Let $t : S \to S$ be the generator of the cyclic Galois action given by $\lambda \mapsto e^{2\pi i/n}\lambda$ and order the paths so that $t \gamma^i = \gamma^{i+1}$, $i = 1 , \dots , n-1$. Then $(\gamma^1 - \gamma^2)$ is a $1$-cycle in $S$. Let $l_\gamma \in H_1(S , \mathbb{Z})$ be its homology class and $c_\gamma \in H^1(S , \mathbb{Z})$ the Poincar\'e dual class. Similarly the cycles $(\gamma^2 -\gamma^3) , \dots , (\gamma^{n-1} - \gamma^n) , (\gamma^n - \gamma^1)$ correspond to the cohomology classes $tc_\gamma , \dots , t^{n-2}c_\gamma , t^{n-1}c_\gamma \in H^1(S,\mathbb{Z})$. Clearly these cycles are in the kernel of $\pi_* : H^1(S , \mathbb{Z}) \to H^1(\Sigma , \mathbb{Z})$, so they are elements of $\Lambda_P$. As the paths $\gamma^1 , \dots , \gamma^n$ are only determined by $\gamma$ up to cyclic permutation, the cycles $c_\gamma , tc_\gamma , \dots , t^{n-1}c_\gamma$ are likewise only determined by $\gamma$ up to cyclic permutation. We call $c_\gamma , tc_\gamma , \dots , t^{n-1}c_\gamma$ the {\em vanishing cycles associated to $\gamma$}. We then have:
\begin{theorem}
Let $a = t^ic_\gamma \in \Lambda_P$ be a vanishing cycle associated to $\gamma$. Then the $SL(n,\mathbb{C})$ monodromy group $\Gamma_{SL}$ contains the Picard-Lefschetz transformation $T_a : \Lambda_P \to \Lambda_P$ associated to $a$, given by:
\begin{equation}\label{equtransvection}
T_a(x) = x + \langle a , x \rangle a.
\end{equation}
\end{theorem}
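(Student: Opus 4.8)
The plan is to realise each $T_a$, $a = t^{i}c_\gamma$, as the Picard--Lefschetz monodromy of a one--parameter degeneration of the cyclic spectral curve $S$ along which a single ordinary double point is created, with vanishing cycle $\pm a$. Let $\sigma$ denote the local monodromy of $\pi : S \to \Sigma$ around $b_i$ (equivalently around $b_j$); since $a_n$ has a simple zero at each $b_m$, $\sigma$ is the full $n$--cycle generating the cyclic Galois group, acting on the sheets $\gamma^1,\dots,\gamma^n$ by $\gamma^m \mapsto \gamma^{m+1}$ (indices mod $n$). The combinatorial point is that a transposition $\tau$ occurs as the local monodromy of a simple branch point in a deformation breaking up $\sigma$ — that is, $\sigma$ factors as $\tau$ times an $(n-1)$--cycle — exactly when $\tau$ interchanges a pair of sheets adjacent in the cyclic order, namely one of $\{\gamma^{m+1},\gamma^{m+2}\}$, $m = 0,\dots,n-1$; and these are precisely the pairs of sheets occurring in $c_\gamma, tc_\gamma,\dots,t^{n-1}c_\gamma$. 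So I would deform $a_0$ to a nearby $a' \in \mathcal{A}^{\rm reg}$, joined to $a_0$ by a path in $\mathcal{A}^{\rm reg}$, in which the totally ramified point over $b_i$ splits into a simple branch point $p$ of monodromy $\tau = (\gamma^{i+1}\ \gamma^{i+2})$ placed on $\gamma$ just inside from $b_i$, together with a branch point of monodromy an $(n-1)$--cycle placed off $\gamma$, and similarly over $b_j$ into a simple branch point $q$ of the same monodromy $\tau$ on $\gamma$ and an $(n-1)$--cycle branch point off $\gamma$. This is a deformation--theoretic statement for which the hypothesis $L = K$ or $\deg L > 2g-2$ is needed: it gives $H^1(\Sigma,L^j) = 0$ for $2 \le j \le n$, so the $a_2,\dots,a_n$ can be deformed to prescribe any small change of the jets of the spectral polynomial near $b_i$, $b_j$ and $\gamma$ while leaving the curve smooth and essentially unchanged elsewhere.

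Next I would slide $p$ towards $q$ along $\gamma$. Let $V$ be a thin disc neighbourhood of the sub--arc of $\gamma$ between $p$ and $q$, containing no branch point other than $p$ and $q$. The monodromies of $p$, $q$ and $\partial V$ all preserve $\{\gamma^{i+1},\gamma^{i+2}\}$, so over $V$ these two sheets form a connected double cover branched at $p$ and $q$, i.e.\ an annulus $\{\mu^2 = (z-p)(z-q)\cdot(\text{unit})\}$, while the other $n-2$ sheets are unbranched over $V$; as $p$ and $q$ collide the double cover acquires a single node $\{\mu^2 = (z-q)^2\cdot(\text{unit})\}$ and no other singularity appears. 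Thus the curve becomes singular at a smooth point $a_1$ of the discriminant $\mathcal{D}$ (a node has a one--dimensional versal deformation, realised inside $\mathcal{A}$). Fix a point $a''$ of the family just before the collision: then $a'' \in \mathcal{A}^{\rm reg}$ lies near $a_1$, and a loop in $\mathcal{A}^{\rm reg}$ based at $a''$ with winding number one about $\mathcal{D}$ has monodromy $T_\delta^{\pm1}$ by Picard--Lefschetz, where $\delta \in \Lambda_P$ is the vanishing cycle (that $\delta \in \Lambda_P$ is clear since $\pi_*\delta = 0$, and that the resulting transvection has the stated form follows from the spectral description of the polarization). Since $T_\delta = T_{-\delta}$ and $T_\delta^{-1}\in\Gamma_{SL}$ forces $T_\delta\in\Gamma_{SL}$, it remains to identify $\delta = \pm t^{i}c_\gamma$.

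At $a''$ the cycle $\delta$ is the core circle of the annulus, i.e.\ the loop running along $\gamma$ from $p$ to $q$ on the sheet $\gamma^{i+1}$ and back on $\gamma^{i+2}$. I would transport this class back to $S$ by reversing the deformations above: then $p$ and $q$ slide back to $b_i$ and $b_j$ and the ramification recombines into the totally ramified points $u_i$, $u_j$, the two arcs of $\delta$ extend to $u_i$ and $u_j$ and become the full lifts $\gamma^{i+1}$ and $\gamma^{i+2}$, while the closing arcs near $p$ and $q$ contract to $u_i$ and $u_j$. Hence the transported class is $[\gamma^{i+1} - \gamma^{i+2}] = t^{i}l_\gamma$, Poincar\'e dual to $t^{i}c_\gamma$. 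Therefore $\Gamma_{SL}$ contains $T_{t^{i}c_\gamma}^{\pm1}$, hence $T_a$; and letting $i$ run over $0,\dots,n-1$ (equivalently, letting the split--off pair range over all $n$ adjacent pairs) realises all of $c_\gamma, tc_\gamma,\dots,t^{n-1}c_\gamma$.

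I expect the main obstacle to be turning the two geometric moves above into an honest one--parameter family inside $\mathcal{A}$: showing that the prescribed splitting of the totally ramified points and the subsequent collision are realised by a path that stays in $\mathcal{A}^{\rm reg}$ except for a single transverse crossing of $\mathcal{D}$, and carrying out the bookkeeping of sheet labels carefully enough to be certain that the vanishing cycle of the node equals $\pm t^{i}c_\gamma$ rather than some other difference of sheets. The local model of the node, the Picard--Lefschetz formula, and the compatibility of the transvection with $\langle\,,\,\rangle$ should be routine given the spectral description of the smooth fibres recalled earlier.
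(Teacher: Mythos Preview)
Your approach is genuinely different from the paper's. The paper does not break up the totally ramified points over $b_i$ and $b_j$; instead it collides them \emph{directly} by moving $b_i$ and $b_j$ together while staying on the cyclic locus $a_2=\cdots=a_{n-1}=0$, producing a spectral curve $\lambda^n + a'_n = 0$ where $a'_n$ acquires a single double zero. This creates an $A_{n-1}$ singularity (not a node), and the paper then shows that the remaining coordinates $a_2,\ldots,a_{n-1}$ furnish a versal deformation of this germ. The Milnor fibre of $A_{n-1}$ is precisely the local picture of the cyclic $n$-fold cover of a disc with two branch points, and its $n-1$ standard vanishing cycles are identified with $c_\gamma, tc_\gamma,\ldots,t^{n-2}c_\gamma$ by the explicit local computation of Proposition~\ref{propgeommono}. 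Thus all of $T_{c_\gamma},\ldots,T_{t^{n-1}c_\gamma}$ come from a single degeneration, whereas you manufacture one node, hence one transvection, at a time and must repeat for each $i$. What the paper's route buys is that the required family is constructed explicitly: only $a_n$ varies, and a family of sections of $L^n$ in which exactly two prescribed zeros collide is produced by an Abel--Jacobi argument (the restricted map $\widetilde{S}^{nl-2}\Sigma \to Jac_{nl-2}(\Sigma)$ is a Serre fibration).

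Your route, by contrast, requires first producing a smooth spectral curve with the non-generic branching pattern $\bigl(\text{transposition}\bigr)\cdot\bigl((n{-}1)\text{-cycle}\bigr)$ near each of $b_i,b_j$, with the transposition placed on $\gamma$ and equal to a \emph{prescribed} adjacent pair $(\gamma^{i+1}\ \gamma^{i+2})$, and then sliding the two simple branch points together while keeping the $(n-1)$-cycle points off $\gamma$ and the curve smooth. You correctly flag this as the main obstacle, and it is a real one: for $n\ge 4$ an $(n-1)$-fold root of the characteristic polynomial is a codimension-$(n-2)$ condition, so the splitting you want is far from generic, and the vanishing of $H^1(\Sigma,L^j)$ only says that local jets can be extended to global sections, not that the resulting spectral curve is smooth nor that the branch points can subsequently be moved as you wish within $\mathcal{A}^{\rm reg}$. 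The idea is not wrong, but closing this gap would need machinery comparable to the paper's Abel--Jacobi construction, and would still have to be iterated $n-1$ times; the $A_{n-1}$ approach is both shorter and more explicit.
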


%%%%%%%%%%%%%%%%%%%%%%%%%
\subsection{Vanishing lattices}

To state our main results, we need to recall the notion of a {\em skew-symmetric vanishing lattice} \cite{jan1}. Let $R$ denote the ring $\mathbb{Z}$ or $\mathbb{Z}_2$ and let $V$ be a free $R$-module of rank $\mu$ equipped with a bilinear form $\langle \; , \; \rangle : V \times V \to R$ which is alternating, i.e. $\langle x , x \rangle = 0$ for all $x \in V$. For any $a \in V$ we have an endomorphism $T_a : V \to V$, called a {\em symplectic transvection}, given by Equation (\ref{equtransvection}). Clearly $T_a$ acts as an automorphism of $V$ which preserves $\langle \; , \; \rangle$. Let $j : V \to V^*$ be the map $j(x) = \langle x , \; \rangle$, let $V_0$ be the kernel of $j$ and $V'$ the image of $j$. Let $Sp^{\#}V$ denote the automorphisms of $(V , \langle \; , \; \rangle)$ acting trivially on $V^*/V'$ and note that $T_a$ belongs to $Sp^{\#}V$ for any $a \in V$.

\begin{definition}[\cite{jan1}]
Let $\Delta \subseteq V$ and let $\Gamma_\Delta$ be the subgroup of $Sp^{\#}V$ generated by $\{ T_\alpha \}_{\alpha \in \Delta}$. We say that $(V , \langle \; , \; \rangle , \Delta)$ is a {\em (skew-symmetric)-vanishing lattice} over $R$ if:
\begin{enumerate}
\item[(i)]{$\Delta$ is a $\Gamma_\Delta$-orbit.}
\item[(ii)]{$\Delta$ spans $V$.}
\item[(iii)]{If $\mu >1$, then there exists $\delta_1,\delta_2 \in \Delta$ for which $\langle \delta_1 , \delta_2 \rangle = 1$.}
\end{enumerate}
We also call $\Gamma_{\Delta}$ the {\em monodromy group} of the vanishing lattice.
\end{definition}

%%%%%%%%%%%%%%%%%%%%%%%%%%%%%%
\subsection{Main Results}\label{secmainres}

Let $\mathcal{VC} \subset \Lambda_P$ be the collection of all vanishing cycles $c_\gamma , \dots , t^{n-1}c_\gamma$ associated to paths $\gamma$ joining pairs of branch points. Let $\Gamma_{\mathcal{VC}}$ be the subgroup of $Sp^{\#}( \Lambda_P )$ generated by transvections $T_a$, where $a \in \mathcal{VC}$ and set $\Delta_P = \Gamma_{\mathcal{VC}} \cdot \mathcal{VC}$.

\begin{theorem}
We have that $(\Lambda_P , \langle \; , \; \rangle , \Delta_P)$ is a vanishing lattice. Let $\Gamma_{\Delta_P}$ be the subgroup of $Sp^{\#}(\Lambda_P)$ generated by transvections in $\Delta_P$. Then $\Gamma_{\mathcal{VC}} = \Gamma_{\Delta_P} = \Gamma_{SL}$. That is, the monodromy group of the $SL(n,\mathbb{C})$-Hitchin fibration is the monodromy group $\Gamma_{\Delta_P}$ of the vanishing lattice $(\Lambda_P , \langle \; , \; \rangle , \Delta_P)$.
\end{theorem}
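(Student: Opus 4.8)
The plan is to break the statement into three pieces and treat them in turn: (A) the triple $(\Lambda_P,\langle\,,\,\rangle,\Delta_P)$ satisfies the three axioms of a skew-symmetric vanishing lattice; (B) $\Gamma_{\mathcal{VC}}=\Gamma_{\Delta_P}$; and (C) $\Gamma_{\mathcal{VC}}=\Gamma_{SL}$. Part (B) is essentially formal: since $\mathcal{VC}\subseteq\Delta_P$ we get $\Gamma_{\mathcal{VC}}\subseteq\Gamma_{\Delta_P}$, while every element of $\Delta_P$ is of the form $g\cdot c$ with $g\in\Gamma_{\mathcal{VC}}$ and $c\in\mathcal{VC}$, whence $T_{g\cdot c}=g\,T_c\,g^{-1}\in\Gamma_{\mathcal{VC}}$ and so $\Gamma_{\Delta_P}\subseteq\Gamma_{\mathcal{VC}}$. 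The same computation shows $\Delta_P=\Gamma_{\mathcal{VC}}\cdot\mathcal{VC}=\bigcup_{c\in\mathcal{VC}}\Gamma_{\mathcal{VC}}\cdot c$ is a single $\Gamma_{\Delta_P}$-orbit precisely when $\mathcal{VC}$ lies in one $\Gamma_{\mathcal{VC}}$-orbit, which is exactly axiom (i); axioms (ii) and (iii) refer only to $\mathcal{VC}$, and it suffices to verify them for $\mathcal{VC}$ since $\mathcal{VC}\subseteq\Delta_P$.

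For (A) I would work on the cyclic spectral curve $\pi:S\to\Sigma$ attached to the basepoint $a_0$. Fixing a CW-model of $\Sigma$ adapted to the branch points $b_1,\dots,b_k$ (a wedge of $2g$ loops together with a tree through the $b_i$) pulls back to an explicit $\mathbb{Z}_n$-equivariant model of $S$, from which one reads off $H_1(S,\mathbb{Z})$, the action of $t$, and the Gysin map $\pi_*$. Axiom (ii) then becomes the assertion that the classes $t^i l_\gamma$, as $\gamma$ runs over a spanning family of edges between branch points, generate $\ker(\pi_*:H_1(S,\mathbb{Z})\to H_1(\Sigma,\mathbb{Z}))=\Lambda_P$; this is a rank count (matching $\mathrm{rk}\,\Lambda_P=2\dim_{\mathbb C}\mathcal A$) together with a direct independence check on the model. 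Axiom (iii) follows from computing the intersection number $l_\gamma\cdot t^i l_{\gamma'}$ for two paths $\gamma,\gamma'$ meeting in exactly one branch point (or crossing once): for a suitable $i$ it equals $\pm1$. Axiom (i), the transitivity of $\Gamma_{\mathcal{VC}}$ on $\mathcal{VC}$, is the subtlest point: using connectedness of $\Sigma$ one moves the endpoints of $\gamma$ to those of $\gamma'$ one at a time, and each elementary braid-type move on paths between branch points is realized by a transvection in $\Gamma_{\mathcal{VC}}$ via Picard–Lefschetz identities $T_{c_\gamma}(c_{\gamma'})=c_{\gamma'}+\langle c_\gamma,c_{\gamma'}\rangle c_\gamma$; the cases where an intersection number vanishes, and the relation between $c_\gamma$ and $tc_\gamma$, are handled using the explicit local model near a branch point together with axiom (iii).

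For (C), the inclusion $\Gamma_{\mathcal{VC}}\subseteq\Gamma_{SL}$ is the preceding Theorem (that $T_a\in\Gamma_{SL}$ for every vanishing cycle $a$), and the same construction realizes each element of $\mathcal{VC}$ as a genuine Picard–Lefschetz vanishing cycle, so $\mathcal{VC}\subseteq W$, where $W\subseteq\Lambda_P$ denotes the set of vanishing cycles of all nodal degenerations. For the reverse inclusion $\Gamma_{SL}\subseteq\Gamma_{\mathcal{VC}}$ I would use Lefschetz theory: since $\mathcal{A}\cong\mathbb{C}^N$ is affine and $\mathcal{D}$ is a hypersurface, a generic affine line $\ell$ through $a_0$ satisfies $\pi_1(\ell\setminus\mathcal{D})\twoheadrightarrow\pi_1(\mathcal{A}^{\rm reg})$ (a Zariski–Lefschetz theorem), and $\pi_1(\ell\setminus\mathcal{D})$ is generated by meridians about the finitely many points of $\ell\cap\mathcal{D}$; for generic $\ell$ each is a transverse intersection with the smooth locus of $\mathcal{D}$ at a spectral curve with a single ordinary node, so its monodromy on the Prym lattice is the Picard–Lefschetz transformation $T_w$ at the vanishing cycle $w\in\Lambda_P$. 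Hence $\Gamma_{SL}$ is generated by $\{T_w:w\in W\}$, and it remains to show $T_w\in\Gamma_{\mathcal{VC}}$ for each $w$. This is where the cyclic spectral curves enter: one checks that irreducibility of $\mathcal{D}$ forces its smooth locus to be connected, so all of $W$ lies in one $\Gamma_{SL}$-orbit; and a local analysis near the cyclic locus shows that a node of a spectral curve close to a cyclic one forces exactly two sheets of the cover to collide along a path joining two branch points, reproducing a cycle of the form $t^i c_\gamma$. Combined with a careful choice of generating meridians (again supplied by deforming the cyclic curve), this identifies the generating transvections of $\Gamma_{SL}$ with transvections at elements of $\Delta_P$, giving $\Gamma_{SL}\subseteq\Gamma_{\Delta_P}=\Gamma_{\mathcal{VC}}$ and hence equality; $W=\Delta_P$ then follows.

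The main obstacle I anticipate is precisely this last identification: matching the combinatorially defined set $\mathcal{VC}$ (and its orbit $\Delta_P$) with the Picard–Lefschetz vanishing cycles $W$ arising from arbitrary nodal degenerations. It requires (a) a precise description of which one-parameter families of spectral curves degenerate to nodal curves and how the node sits with respect to the branched-cover structure, (b) irreducibility of the discriminant $\mathcal{D}$, and (c) a deformation argument, anchored at the cyclic spectral curves, showing that every such vanishing cycle is carried into $\Delta_P$ by an element of $\Gamma_{\mathcal{VC}}$ — equivalently, that the cyclic locus and its small perturbations already see all of the monodromy. The transitivity step (axiom (i)), including the degenerate intersection-number cases, is the other place where the argument needs genuine care rather than bookkeeping.
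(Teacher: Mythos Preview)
Your decomposition into (A), (B), (C) matches the paper's architecture, and (B) is correct. But there is a genuine gap in (C), exactly at the point you flag as the main obstacle. You correctly deduce from irreducibility of $\mathcal{D}$ that the set $W$ of Picard--Lefschetz vanishing cycles is a single $\Gamma_{SL}$-orbit (up to sign), and that $c_1\in W\cap\mathcal{VC}$. But this only gives $W=\Gamma_{SL}\cdot\{\pm c_1\}$, while $\Delta_P=\Gamma_{\Delta_P}\cdot c_1$ with, a priori, $\Gamma_{\Delta_P}\subseteq\Gamma_{SL}$. Your proposed ``deformation argument anchored at the cyclic spectral curves'' does not break this circularity: connecting two nodal degenerations by a path in the smooth locus of $\mathcal{D}$ produces a conjugating element of $\Gamma_{SL}$, not of $\Gamma_{\mathcal{VC}}$, which is precisely what you need and do not yet have.

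The paper closes this gap by a mechanism absent from your proposal: it first \emph{classifies} $(\Lambda_P,\langle\,,\,\rangle,\Delta_P)$ using Janssen's theory of skew-symmetric vanishing lattices over $\mathbb{Z}_2$ and $\mathbb{Z}$. The payoff is an \emph{intrinsic}, a priori $\Gamma_{SL}$-invariant description of $\Delta_P$ as a subset of $\Lambda_P$. In the orthogonal case (for $n>2$ with $n$ odd, or $n,l$ both even) one has $\overline{\Delta}_P=\{v\in\Lambda_P[2]\setminus(\Lambda_P[2])_0 : q(v)=1\}$, where $q$ is a quadratic refinement of the mod~$2$ intersection form constructed geometrically via the mod~$2$ index of a spin structure pulled back from $\Sigma$; in the symplectic case ($n>2$ even, $l$ odd) $\overline{\Delta}_P$ is everything outside the null space. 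The crucial point is that $q$ is invariant under the full monodromy $\Gamma_{SL}$, not merely under $\Gamma_{\Delta_P}$, because it is defined by geometry on the family of spectral curves. Then for any $w=\pm g c_1$ with $g\in\Gamma_{SL}$ one gets $q(w)=q(c_1)=1$, and Janssen's membership criterion (an element lies in $\Delta$ iff it pairs to $1$ with something and is congruent mod~$2$ to an element of $\Delta$) yields $w\in\Delta_P$. The case $n=2$ (special type $A'$) requires a separate direct check of the mod~$2$ class of $c_\gamma$. Your Part~(A) is also harder than necessary: rather than proving transitivity on all of $\mathcal{VC}$ by braid moves, the paper works with a small explicit subset $S_P\subset\mathcal{VC}$ and applies the elementary criterion that a generating set whose intersection graph is connected with edge-labels $\pm1$ already yields a vanishing lattice with $\Gamma_\Delta=\Gamma_{S_P}$.
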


Thus to describe the monodromy group $\Gamma_{SL}$ and its action on $\Lambda_P$, it suffices to classify the vanishing lattice $(\Lambda_P , \langle \; , \; \rangle , \Delta_P)$. The classification is as follows (here we use the notation for vanishing lattices introduced in \cite{jan1,jan2}, which is recalled in \textsection \ref{secthevanlat}):

\begin{theorem}
The vanishing lattice $(\Lambda_P , \langle \; , \; \rangle , \Delta_P)$ is isomorphic to:
\begin{enumerate}
\item{$A'(1,1, \dots , 1 , 2 , 2 , \dots , 2; 0)$, if $n=2$,}
\item{$O_a^{\#}(1,1, \dots , 1 , n , n , \dots , n ; 0)$, where $a = (m(m-1)/2)l$, if $n =2m+1$ is odd,}
\item{$O_a^{\#}(1,1, \dots , 1 , n , n , \dots , n ; 0)$, where $a = m(l/2)$, if $n=2m$ and $l$ are even and $n>2$,}
\item{$Sp^{\#}(1,1, \dots , 1 , n , n , \dots , n ; 0)$, if $n$ is even, $l$ is odd and $n > 2$.}
\end{enumerate}
In this classification, the number of $1$'s is $(n-2)(g-1) + n(n-1)l/2 - 1$ and the number of $n$'s is $g$.
\end{theorem}

As mentioned previously, our results also yield the monodromy of the $GL(n,\mathbb{C})$-Hitchin fibration. Fix a basepoint $a_0 \in \mathcal{A}^{\rm reg}$ with spectral curve $S$ and let $\Lambda_S = H^1(S, \mathbb{Z})$. Notice that $\Lambda_P$ is a sublattice of $\Lambda_S$. The monodromy of the $GL(n,\mathbb{C})$-Hitchin fibration is given by a representation $\rho_{GL} : \pi_1( \mathcal{A}^{\rm reg} , a_0) \to Aut( \Lambda_S)$. Let $\Gamma_{GL} \subseteq Aut( \Lambda_S)$ be the image of $\rho_{GL}$. We have:

\begin{theorem}\label{thmglmon}
The monodromy group $\Gamma_{GL}$ is the subgroup of $Aut(\Lambda_S)$ generated by transvections $T_{\alpha} : \Lambda_S \to \Lambda_S$, where $\alpha \in \Delta_P$.
\end{theorem}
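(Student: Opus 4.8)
The plan is to reduce the $GL(n,\mathbb{C})$ statement to the $SL(n,\mathbb{C})$ results already in hand by analyzing the relationship between $\Lambda_S = H^1(S,\mathbb{Z})$ and the sublattice $\Lambda_P = \ker(\pi_* : H^1(S,\mathbb{Z}) \to H^1(\Sigma,\mathbb{Z}))$, together with the complementary piece coming from $\pi^*H^1(\Sigma,\mathbb{Z})$. The first step is to recall the structure of the $GL(n,\mathbb{C})$-Hitchin fibration: over $\mathcal{A}^{\rm reg}$ (or the relevant affine base), the fibre is the Jacobian (or a torsor over it) of the spectral curve $S$, so its first homology is $\Lambda_S$, and there is a short exact sequence of local systems relating the Gauss–Manin systems $R^1h_*\mathbb{Z}$ for $GL(n)$ and $SL(n)$ to the constant system $H^1(\Sigma,\mathbb{Z})$ pulled back via the spectral correspondence. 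Concretely, for each $a \in \mathcal{A}^{\rm reg}$ one has $H^1(S_a,\mathbb{Z}) = \pi_a^* H^1(\Sigma,\mathbb{Z}) \oplus^{?} \underline{\Lambda}_a$; while this is not literally a direct sum of lattices in general, the map $\pi_a^*$ is injective with image a primitive sublattice, and the monodromy respects the induced filtration $0 \subset \pi^*H^1(\Sigma,\mathbb{Z}) \subset \Lambda_S$ with graded pieces the trivial system $H^1(\Sigma,\mathbb{Z})$ and the $SL(n)$ system $\underline{\Lambda}$. The key point is that $\pi^*H^1(\Sigma,\mathbb{Z})$ is monodromy-invariant and the monodromy acts trivially on it (the base $\Sigma$ does not move), so $\rho_{GL}$ is completely determined once we know its action on the quotient $\Lambda_S/\pi^*H^1(\Sigma,\mathbb{Z})$ — which is $\rho_{SL}$ — plus the extension data, i.e. how a loop in $\pi_1(\mathcal{A}^{\rm reg})$ shears $\Lambda_S$ along $\pi^*H^1(\Sigma,\mathbb{Z})$.

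Next I would identify the generators. By the same vanishing-cycle construction used in the $SL(n)$ case, the monodromy around a generic point of the discriminant $\mathcal{D}$ is a Picard–Lefschetz transformation on $H^1(S_a,\mathbb{Z})$; since a vanishing cycle $\alpha$ of a spectral curve degeneration automatically lies in $\Lambda_P$ (it is supported on fibres of $\pi$ and maps to zero under $\pi_*$, exactly as shown in the $SL(n)$ discussion preceding Theorem 1.2), the corresponding transvection $T_\alpha : \Lambda_S \to \Lambda_S$, $x \mapsto x + \langle \alpha, x\rangle \alpha$, is well-defined on all of $\Lambda_S$ using the intersection pairing on $H^1(S,\mathbb{Z})$, and it fixes $\pi^*H^1(\Sigma,\mathbb{Z})$ pointwise because $\langle \alpha, \pi^*\beta \rangle = \langle \pi_*\alpha, \beta\rangle = 0$ by the projection formula. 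This shows $\Gamma_{GL}$ is \emph{contained in} the group generated by $\{T_\alpha : \alpha \in \Delta_P\}$, once we know (from the $SL(n)$ analysis, Theorems 1.2–1.4, and the fact that the set of vanishing cycles one obtains is the same) that every element of $\Delta_P$ arises as a vanishing cycle of some spectral-curve degeneration reachable by a loop in $\mathcal{A}^{\rm reg}$ — this is precisely the content that makes $\Gamma_{\mathcal{VC}} = \Gamma_{SL}$, and the identical degenerations act on $\Lambda_S$.

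For the reverse inclusion, I would argue that $\Gamma_{GL}$ \emph{contains} all the $T_\alpha$, $\alpha \in \Delta_P$, again directly: each generator $c_\gamma$ (and its translates $t^ic_\gamma$) of $\mathcal{VC}$ was realized in the $SL(n)$ case as an honest vanishing cycle of an explicit one-parameter degeneration of cyclic spectral curves, and that same degeneration, viewed inside the $GL(n)$-Hitchin fibration, has monodromy equal to the Picard–Lefschetz transvection $T_{c_\gamma}$ on $H^1(S,\mathbb{Z})=\Lambda_S$ (Picard–Lefschetz for a family of curves acquiring a node is a statement about $H^1$ of the whole curve, not just a Prym piece). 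Since $\Delta_P = \Gamma_{\mathcal{VC}}\cdot\mathcal{VC}$ and $\Gamma_{\mathcal{VC}}\subseteq\Gamma_{GL}$ (being generated by such transvections), conjugating shows $T_\alpha\in\Gamma_{GL}$ for all $\alpha\in\Delta_P$. Combining the two inclusions gives the result.

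The main obstacle I anticipate is \textbf{bookkeeping of the extension/shearing data}: a priori a loop in $\pi_1(\mathcal{A}^{\rm reg})$ could act on $\Lambda_S$ by a transvection composed with a unipotent shear along $\pi^*H^1(\Sigma,\mathbb{Z})$ that is invisible on the $SL(n)$ quotient, and one must rule this out. The cleanest way is to observe that the $GL(n)$ fibre is (a torsor over) $\mathrm{Jac}(S)$ and the spectral curve $S_a$ itself does not degenerate in a way that mixes $\pi^*H^1(\Sigma)$ with the rest — i.e. the polarization and the morphism $\pi$ are defined over $\mathcal{A}^{\rm reg}$, so $\pi^*H^1(\Sigma,\mathbb{Z})$ is not merely monodromy-invariant but the monodromy is the identity on it, and near a generic discriminant point the local monodromy is exactly the single Picard–Lefschetz transvection with no extra unipotent part (the local degeneration is a single node on $S$). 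One then needs the (standard but worth stating) fact that $H_1$ of the total space $\mathcal{A}^{\rm reg}$, or rather the set of Picard–Lefschetz generators, suffices to generate $\Gamma_{GL}$ — this is where one invokes that $\mathcal{D}$ has a dense stratum of generic (one-nodal) spectral curves and that $\mathcal{A}^{\rm reg}$'s fundamental group is generated by small loops around such points, exactly as in the $SL(n)$ argument. Modulo citing that geometric input, the proof is then a short formal argument comparing the two lattices.
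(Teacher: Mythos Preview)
Your core argument is correct and matches the paper's proof: $\Gamma_{GL}$ is generated by Picard--Lefschetz transvections around meridians (since $\pi_1(\mathcal{A}^{\rm reg})$ is generated by such loops and the local monodromy at a generic nodal degeneration is classically a single transvection on all of $\Lambda_S$), the $SL(n)$ analysis (Theorem~\ref{thmmain1}) already shows every meridional vanishing cycle lies in $\Delta_P$, and conversely the elements of $S_P \subset \Delta_P$ are realised by explicit meridians and generate the same subgroup of $Aut(\Lambda_S)$ as all of $\Delta_P$ by the argument of Proposition~\ref{propsgen}. Your filtration and ``shearing'' discussion is unnecessary scaffolding that the paper avoids entirely---once you know the local monodromy is exactly $T_\alpha$ there is no room for an extra unipotent piece---and incidentally it uses the wrong filtration: the $SL(n)$ lattice is $\Lambda_P = \ker \pi_*$ sitting in $0 \to \Lambda_P \to \Lambda_S \xrightarrow{\pi_*} \Lambda_\Sigma \to 0$, whereas $\pi^*\Lambda_\Sigma$ is the orthogonal complement of $\Lambda_P$ (and $\Lambda_S/\pi^*\Lambda_\Sigma$ is not $\Lambda_P$ over $\mathbb{Z}$).
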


In \textsection \ref{secappl} we give an application of our monodromy computations. Let $\mathcal{M}(n,d,L)$ denote the moduli space of rank $n$ $L$-twisted Higgs bundles with trace-free Higgs field and determinant equal to a fixed line bundle of degree $d$. The Hitchin fibration $h : \mathcal{M}(n,d,L) \to \mathcal{A}$ may be defined for any value of $d$. Let $F_a = h^{-1}(a)$ be a non-singular fibre of the Hitchin fibration lying over $a \in \mathcal{A}^{\rm reg}$. Then:
\begin{theorem}\label{thmrestriction0}
Let $\omega \in H^2( F_a , \mathbb{Q} )$ be the cohomology class of the polarization on $F_a$. The image
\begin{equation*}
Im( H^*( \mathcal{M}(n,d,L) , \mathbb{Q} )) \to H^*( F_a , \mathbb{Q} ) )
\end{equation*}
of the restriction map in cohomology is the subspace spanned by $1 , \omega , \omega^2 , \dots , \omega^u$, where $u = dim_{\mathbb{C}}(F_a)$ is the dimension of the fibre.
\end{theorem}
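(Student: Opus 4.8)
The plan is to combine the monodromy computation with a standard argument in the cohomology of abelian variety bundles. The restriction map $H^*(\mathcal{M}(n,d,L),\mathbb{Q}) \to H^*(F_a,\mathbb{Q})$ factors through the monodromy invariants $H^*(F_a,\mathbb{Q})^{\Gamma}$, since the image of any global class must be preserved by the deck transformations of the bundle $h : \mathcal{M}^{\rm reg} \to \mathcal{A}^{\rm reg}$ (the relevant monodromy group being that of the $SL$ or $GL$ fibration, depending on $d$ and whether the determinant is fixed — in all cases the transvections $T_\alpha$, $\alpha\in\Delta_P$, act). So the first step is purely group-theoretic: compute the invariant subspace $H^*(F_a,\mathbb{Q})^{\Gamma}$. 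Since $F_a$ is an abelian variety (or a torsor over one), $H^*(F_a,\mathbb{Q}) = \Lambda^*(\Lambda_P^*\otimes\mathbb{Q})$ (respectively $\Lambda^*(\Lambda_S^*\otimes\mathbb{Q})$), and the $\Gamma$-action is induced from the action on $\Lambda_P\otimes\mathbb{Q}$. Here I would invoke the vanishing lattice classification: because $\Delta_P$ spans $\Lambda_P$ and the monodromy group acts (essentially) as a full symplectic-type group on $\Lambda_P\otimes\mathbb{Q}$, the only invariant vectors in $\Lambda^k(\Lambda_P^*\otimes\mathbb{Q})$ are the powers of the symplectic form $\omega$. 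This is the classical fact that for the symplectic group $Sp(2g,\mathbb{Q})$ acting on the standard representation $W$, one has $(\Lambda^k W)^{Sp} = \mathbb{Q}\cdot\omega^{k/2}$ for $k$ even and $0$ for $k$ odd; I would reduce our group to this case using that $\Gamma$ contains $\Gamma_{\Delta_P}$, which by the classification contains the symplectic group of a finite-index sublattice, hence acts with the same rational invariants. One must be slightly careful when there is a nontrivial radical $V_0$ (the lattice splits off copies of the $n$'s), but the pairing restricted to the relevant summand is non-degenerate rationally, so the radical does not produce extra invariants beyond powers of $\omega$.

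The second step is to show that all of these invariants are actually hit, i.e. the image contains $1,\omega,\dots,\omega^u$. This follows because $\omega$ itself extends: the polarization class on the fibres is the restriction of a global cohomology class on $\mathcal{M}(n,d,L)$. Concretely, $\mathcal{M}(n,d,L)$ carries a natural class — for $L=K$ this is (a multiple of) the class of the holomorphic symplectic/Kähler form, and in general one uses a relative theta divisor or the $\kappa$-class coming from the universal spectral data — whose restriction to each smooth fibre is $\omega$. Once $\omega$ is in the image, so are all its powers $\omega^j$ for $0\le j\le u$, since the restriction map is a ring homomorphism and $\omega^u\ne 0$ while $\omega^{u+1}=0$ on the fibre. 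Combined with Step 1 this gives the equality.

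The main obstacle is Step 1, specifically verifying that our monodromy group $\Gamma$ — which is not literally the full symplectic group but rather the monodromy group $\Gamma_{\Delta_P}$ of a vanishing lattice of type $O_a^{\#}$, $Sp^{\#}$, or $A'$ — has rational invariants in $\Lambda^*$ no larger than $\mathbb{Q}[\omega]$. For this I would appeal directly to Janssen's structure theory: in each case from Theorem 1.5 the group $\Gamma_{\Delta_P}$ contains the commutator subgroup of the symplectic group of a full-rank sublattice of $\Lambda_P$, and this is Zariski-dense in $Sp(\Lambda_P\otimes\mathbb{Q})$ (using $g\ge 1$ and $u>1$ so the rank is large enough that the relevant symplectic groups are semisimple with no small-rank exceptions); Zariski density suffices to equate the invariants. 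A secondary technical point is the $GL$ versus $SL$ distinction and the dependence on $d$: for the $GL$-fibration one works with $\Lambda_S = \Lambda_P \oplus H^1(\Sigma,\mathbb{Z})$, where $\Gamma_{GL}$ acts trivially on the $H^1(\Sigma)$ factor, so $\omega$ (pulled back from the $\Lambda_P$ part, together with the standard pairing on $H^1(\Sigma)$) still generates all invariants after the identical argument; and for general $d$ the fibre is a torsor, which does not affect rational cohomology. I would organize the write-up so that the abelian-variety-bundle formalism (Leray spectral sequence degeneration, invariants computation) is stated once and the three lattice cases are checked uniformly via Zariski density.
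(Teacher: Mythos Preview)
Your overall architecture matches the paper's: factor the restriction through monodromy invariants, identify those invariants with $\mathbb{Q}[\omega]$, then exhibit a global class restricting to $\omega$. The execution of the two nontrivial steps differs, however.

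For the invariant computation, the paper does \emph{not} argue via Zariski density in $Sp(\Lambda_P\otimes\mathbb{Q})$. Instead it works one prime at a time: for each odd prime $p\nmid n$ it shows, citing Brown--Humphries, that the reduction $\Gamma_{SL}\to Sp(\Lambda_P\otimes\mathbb{Z}_p)$ is \emph{surjective}, proves by hand (Lemma~\ref{leminvs}) that $(\wedge^*V)^{Sp(V,\mathbb{Z}_p)}$ is spanned by the $\alpha_{2m}$, and then lifts to $\mathbb{Q}$ by a divisibility/infinitely-many-primes argument. Your Zariski-density route is also valid, but your proposed justification is imprecise: Janssen's classification is an integral/$\mathbb{Z}_2$ statement and does not directly hand you ``the commutator subgroup of the symplectic group of a full-rank sublattice'' (this is doubtful in the $O^\#$ and $A'$ cases). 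What does work cleanly is the elementary argument that the Zariski closure of $\Gamma_{\Delta_P}$ has Lie algebra containing every $\alpha\otimes\alpha$ for $\alpha\in S_P$, and that connectedness of $S_P$ (Proposition~\ref{propsgen}(ii)) then forces the Lie algebra to be all of $\mathfrak{sp}$; alternatively one can deduce Zariski density from the same Brown--Humphries surjectivity the paper uses. Also note that $\langle\,,\,\rangle$ is nondegenerate on $\Lambda_P$ over $\mathbb{Q}$ (type $(1,\dots,1,n,\dots,n)$), so your worry about a radical $V_0$ is unnecessary, and the $GL$ discussion is extraneous to the statement.

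For the second step the paper is more economical than your proposed constructions: it simply observes that $\mathcal{M}(n,d,L)$ is quasi-projective, so some ample class $\alpha\in H^2(\mathcal{M}(n,d,L),\mathbb{Q})$ restricts to a K\"ahler class on $F_a$, which must be a nonzero multiple of $\omega$. This avoids having to build a relative theta divisor or $\kappa$-class and works uniformly in $n,d,L$.
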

This result generalises to $SL(n,\mathbb{C})$ a result proved by Thaddeus for $SL(2,\mathbb{C})$ in \cite{tha} (see also the appendix of \cite{chm}). When $n$ and $d$ are coprime, the moduli space $\mathcal{M}(n,d,L)$ is smooth and Theorem \ref{thmrestriction0} can be proven by a straightforward generalisation of the argument given in \cite{chm}. The argument breaks down when $n$ and $d$ are not coprime, in which case Theorem \ref{thmrestriction0} is a new result.

%%%%%%%%%%%%%%%%%%%%%%%%%%
\subsection{Applications}\label{secapp}
Knowledge of the monodromy is important for applications of the moduli space of Higgs bundles in which the Hitchin fibration plays a prominent role. We describe some of these applications here.\\

{\bf Cohomology of the moduli space and Ng\^o's support theorem}. Let $\mathcal{M}_{GL}(n,d,L)$ denote the moduli space of $L$-twisted Higgs bundles of rank $n$, degree $d$. Assume that $n$ and $d$ are coprime, in which case $\mathcal{M}_{GL}(n,d,L)$ is non-singular. We have the Hitchin fibration $h : \mathcal{M}_{GL}(n,d,L) \to \mathcal{A}_{GL}$ whose monodromy is isomorphic to $\underline{\Lambda}$ for any value of $d$. Let $\mathcal{A}_{GL}^{\rm ell} \subset \mathcal{A}_{GL}$ be the elliptic locus of $\mathcal{A}_{GL}$, the locus of points for which the corresponding spectral curve is reduced and irreducible. Ng\^o's support theorem \cite{ngo} (see also \cite{cm}) applied to the restriction $h^{\rm ell} : \mathcal{M}_{GL}^{\rm ell}(n,d,L) \to \mathcal{A}_{GL}^{\rm ell}$ of $h$ over $\mathcal{A}_{GL}^{\rm ell}$ implies that the perverse sheaves on $\mathcal{A}^{\rm ell}_{GL}$ occuring in the decomposition theorem for $h^{\rm ell}$ are supported on the whole of $\mathcal{A}_{GL}^{\rm ell}$. The decomposition theorem then reduces to:
\begin{equation*}
Rh^{\rm ell}_* \mathbb{Q}[ dim(\mathcal{M}_{GL}(n,d,L)) ] = \bigoplus_i IC_{\mathcal{A}_{GL}^{\rm ell}}( \wedge^i \underline{\Lambda}^* \otimes_{\mathbb{Z}} \mathbb{Q} )[f-i],
\end{equation*}
where $f$ is the dimension of the fibres of the Hitchin fibration. This raises the possibility of computing the cohomology of $\mathcal{M}_{GL}^{\rm ell}(n,d,L)$ through a knowledge of the local system $\underline{\Lambda}$. In turn, this gives us partial information about the cohomology of the full moduli space $\mathcal{M}_{GL}(n,d,L)$. In fact for $d > 2g-2$, it was shown by Chaudouard and Laumon that there are no new supports when $\mathcal{A}_{GL}^{\rm ell}$ is replaced by $\mathcal{A}_{GL}$ \cite{chla}. A similar result holds in the $SL(n,\mathbb{C})$ case, except there are additional supports related to the endoscopy theory of $SL(n,\mathbb{C})$ \cite{cat}. In a related direction, we note that local monodromy calculations were used extensively in \cite{chm} in the proof of the $P = W$ conjecture for $GL(2,\mathbb{C}), SL(2,\mathbb{C}), PSL(2,\mathbb{C})$. We expect our monodromy calculations to be similarly useful in tackling this conjecture for higher rank groups.\\

{\bf Wall crossing and the hyperk\"ahler metric}. In the work of Gaiotto, Moore and Neitzke on the Kontsevich-Soibelman wall-crossing formula for $\mathcal{N} = 2$ supersymmetric quantum field theories, a relation is established between hyperk\"ahler metrics and counts of BPS states \cite{gmn1}. In particular, this can be applied to the hyperk\"ahler metric on the moduli space of Higgs bundles (in the untwisted case $L = K$). Through a twistorial construction, the hyperk\"ahler metric on the moduli space of Higgs bundles is encoded in a family of complex symplectic forms $\omega(\zeta)$ parametrised by $\zeta \in \mathbb{CP}^1$. In \cite{gmn1}, the authors consider local Darboux coordinates for the symplectic forms $\omega(\zeta)$. The Darboux coordinates $\chi_\gamma$, are locally defined $\mathbb{C}^*$-valued functions on $\mathcal{M}^{\rm reg}$ depending on a choice of point $\zeta \in \mathbb{C}^* \subset \mathbb{CP}^1$ and local section $\gamma$ of the monodromy local system $\underline{\Lambda}$. The functions $\chi_\gamma$ depend multiplicatively on $\gamma$ in the sense that $\chi_{\gamma + \gamma'} = \chi_\gamma \chi_{\gamma'}$ and are Darboux coordinates in the sense that their Poisson brackets are given by the formula:
\begin{equation*}
\{ \chi_\gamma , \chi_{\gamma'} \} = \langle \gamma  , \gamma' \rangle \chi_{\gamma} \chi_{\gamma'}.
\end{equation*}
The locally defined coordinates $\chi_\gamma$ do not patch together globally, instead they satisfy a wall-crossing formula as one crosses a real codimension $1$ subspace of $\mathcal{M}^{\rm reg} \times \mathbb{C}^*$. The wall-crossing formula is given by a holomorphic symplectomorphism, which may be expressed in terms of symplectomorphisms $\mathcal{K}_\gamma$ of the form
\begin{equation*}
\mathcal{K}_\gamma : \chi_{\gamma'} \mapsto \chi_{\gamma'}( 1 - (-1)^{q(\gamma)}\chi_{\gamma})^{\langle \gamma' , \gamma \rangle },
\end{equation*}
where $q : \underline{\Lambda} \to \mathbb{Z}_2 = \{ 0, 1 \}$ is a quadratic function on $\underline{\Lambda}$ (see \textsection \ref{secquadratics}). Interestingly, the existence of a monodromy invariant quadratic function on $\underline{\Lambda}$ is a key ingredient in the classification of $\underline{\Lambda}$ as a vanishing lattice. The proposal of \cite{gmn1} is that the existence of coordinates $\chi_\gamma$ satisfying the wall-crossing formula and some further technical conditions completely determines the hyperk\"ahler metric on the moduli space, through twistor theory. Clearly it is important for this proposal to be able to describe the local system $\underline{\Lambda}$ and its quadratic function $q$.\\

{\bf Connected components of real character varieties}. Let $G$ be a real or complex reductive Lie group. A representation $\theta : \pi_1(\Sigma) \to G$ is called {\em reductive} if the action of $\pi_1(\Sigma)$ on the Lie algebra of $G$ obtained by composing $\theta$ with the adjoint representation is a direct sum of irreducible representations. Let $Hom^{\rm red}(\pi_1(\Sigma) , G)$ be the space of reductive representations given the compact-open topology. The group $G$ acts on $Hom^{\rm red}(\pi_1(\Sigma) , G)$ by conjugation and the quotient
\begin{equation*}
Rep(G) = Hom^{\rm red}(\pi_1(\Sigma) , G)/G
\end{equation*}
is a Hausdorff space \cite{ric}, called the {\em character variety} of representations of $\pi_1(\Sigma)$ in $G$. The non-abelian Hodge correspondence gives a homeomorphism between $Rep(G)$ and the moduli space $\mathcal{M}_G$ of polystable $G$-Higgs bundles, with $L = K$. If $G_\mathbb{C}$ is a complex reductive group and $G_{\mathbb{R}}$ the split real form, then as shown by Schaposnik \cite[Theorem 4.12]{sch0}, the image of the natural map $\mathcal{M}_{G_{\mathbb{R}}} \to \mathcal{M}_{G_{\mathbb{C}}}$ meets the smooth fibres of the Hitchin fibration in points of order $2$. The points of order $2$ in smooth fibres are described by the monodromy with $\mathbb{Z}_2$-coefficients $\underline{\Lambda}[2] = \underline{\Lambda} \otimes_{\mathbb{Z}} \mathbb{Z}_2$. Thus one may hope to determine the number of connected components of $\mathcal{M}_{G_{\mathbb{R}}}$ and hence of $Rep(G_{\mathbb{R}})$ by counting the number of orbits of the local system $\underline{\Lambda}[2]$. This strategy has been successfully carried out for $SL(2,\mathbb{R})$ in \cite{sch1} and extended to $GL(2,\mathbb{R})$, $PGL(2,\mathbb{R})$ and $Sp(4,\mathbb{R})$ with maximal Toledo invariant in \cite{bs1}. With the calculation of the monodromy $\underline{\Lambda}$ in this paper one can similarly obtain the number of components of the character varieties for $SL(n,\mathbb{R}), GL(n,\mathbb{R}), PSL(n,\mathbb{R})$ and $Sp(2n,\mathbb{R})$ with maximal Toledo invariant for any value of $n$. There is however a caveat that so far for $n>2$, we are only able to count the number of connected components which intersect the regular locus. To turn these counts into a complete proof it will be necessary to show that there are no components of the corresponding character varieties lying entirely within the singular locus. We hope to address this problem in future work.\\

The strategy of counting components of $Rep(G_\mathbb{R})$ through monodromy may also be applied to non-split real forms. One may define spectral data or cameral data for arbitrary real forms and at least in some cases, this gives rise to families of smooth spectral curves or cameral curves. The generic fibre $F$ of the Hitchin fibration restricted to $\mathcal{M}_{G_\mathbb{R}}$ is generally not a discrete space anymore, but one can consider the monodromy action on $\pi_0(F)$ and by counting the number of orbits obtain a count of the components of $Rep(G_\mathbb{R})$.

%%%%%%%%%%%%%%%%%%%%%%%%%%
\subsection{Structure of the paper}
This paper is organised as follows. In \textsection \ref{sec2} we recall the spectral curve construction of the regular locus of the Hitchin fibration. In \textsection 
\ref{secthelattices}, we study in detail the structure of the lattices $\Lambda_S , \Lambda_P$, where $\Lambda_S = H^1(S , \mathbb{Z})$ is the integral cohomology of the spectral curve over the basepoint $a_0$. In particular, we work out the intersection form $\langle \; , \; \rangle$ in Section \ref{secintf}. In Section \ref{secquadratics} we find that under conditions on $n$ and $l$, there is a natural quadratic function on $\Lambda_P$ and we compute its Arf invariant. This is an important input for the classification of the vanishing lattice $(\Lambda_P , \langle \; , \; \rangle , \Delta_P)$. In \textsection \ref{secvc} we construct vanishing cycles associated to paths $\gamma$ in $\Sigma$ joining pairs of branch points. In \textsection \ref{secthevanlat}, we use the vanishing cycles of \textsection \ref{secvc}, to construct the vanishing lattice $(\Lambda_P , \langle \; , \; \rangle , \Delta_P)$. We then recall the classification of vanishing lattices over $\mathbb{Z}_2$ in Section \ref{secz2class}, over $\mathbb{Z}$ in Section \ref{seczclass} and use this to completely determine the structure of $(\Lambda_P , \langle \; , \; \rangle , \Delta_P)$. In \textsection \ref{secmain}, we prove that the monodromy group $\Gamma_{SL}$ of the $SL(n,\mathbb{C})$-Hitchin fibration coincides with the monodromy group $\Gamma_{\Delta_P}$ of the vanishing lattice. In \textsection \ref{secappl}, we apply our monodromy computations to give a proof of Theorem \ref{thmrestriction}, describing the image of the restriction map from the cohomology of the moduli space of Higgs bundles to the cohomology of a non-singular fibre of the Hitchin fibration.

%%%%%%%%%%%%%%%%%%%%%%%%%%%%
\subsection{Acknowledgements}

We would like to thank Laura Schaposnik, Tam\'as Hausel and Nigel Hitchin helpful discussions. The author is supported by the Australian Research Council grant DE160100024.

%%%%%%%%%%%%%%%%%%%%%%%%%%%%%%%%%%%%%%%%%%%
%%%%%%%%%%%%%%%%%%%%%%%%%%%%%%%%%%%%%%%%%%%
% SECTION
%%%%%%%%%%%%%%%%%%%%%%%%%%%%%%%%%%%%%%%%%%%
%%%%%%%%%%%%%%%%%%%%%%%%%%%%%%%%%%%%%%%%%%%
\section{Spectral curves and the regular locus}\label{sec2}

We recall some basic facts about the moduli space of $SL(n,\mathbb{C})$-Higgs bundles, in particular the spectral curve construction of the regular locus. An {\em $SL(n,\mathbb{C})$ $L$-twisted Higgs bundle} is a pair $(E,\Phi)$ where $E$ is a rank $n$ holomorphic vector bundle on $\Sigma$ with trivial determinant and $\Phi$ is a trace-free holomorphic endomorphism $\Phi : E \to E \otimes L$. As shown by Nitsure \cite{nit}, one may define notions of semistability and $S$-equivalence for twisted Higgs bundles and construct a moduli space $\mathcal{M}(n,L)$ of $S$-equivalence classes of semistable $SL(n,\mathbb{C})$ $L$-twisted Higgs bundles. The moduli space $\mathcal{M}(n,L)$ is a quasi-projective complex algebraic variety \cite{nit}. As the rank $n$ and line bundle $L$ will be fixed throughout, we will omit them from the notation and simply write $\mathcal{M}$ for the moduli space.\\

Recall the {\em Hitchin fibration}, also known as the {\em Hitchin map} or {\em Hitchin system} is a surjective holomorphic map $h : \mathcal{M} \to \mathcal{A}$ from $\mathcal{M}$ to the affine space $\mathcal{A} = \bigoplus_{j=2}^n H^0(\Sigma , L^j)$ \cite{hit1,hit2,nit}. Using the notion of spectral curves recalled below, it can be shown that the non-singular fibres of the Hitchin map are abelian varieties \cite{hit2,bnr}. Let $\mathcal{D} \subset \mathcal{A}$ denote the locus of singular fibres of the Hitchin map and let $\mathcal{A}^{\rm reg} = \mathcal{A} \setminus \mathcal{D}$ be the regular locus. We let $\mathcal{M}^{\rm reg}$ denote the points of $\mathcal{M}$ lying over the regular locus, so that $h : \mathcal{M}^{\rm reg} \to \mathcal{A}^{\rm reg}$ is a locally trivial torus bundle. Our goal in this paper is to determine the monodromy of this torus bundle.\\

We recall the spectral curve construction of $\mathcal{M}^{\rm reg}$ from $\mathcal{A}^{\rm reg}$. For this, let $tot(L)$ denote the total space of $L$ and $\pi : tot(L) \to \Sigma$ the projection. We let $\lambda \in H^0( tot(L) , \pi^*L)$ denote the tautological section of $\pi^*(L)$ on $tot(L)$. Given $a = (a_2,a_3, \dots , a_n) \in \mathcal{A}$, let $s_a$ be the section of $\pi^*L^n$ on $tot(L)$ given by $s_a = \lambda^n + a_2 \lambda^{n-2} + \dots + a_n$. The vanishing locus of $s_a$ defines a curve $S_a \subset tot(L)$ called the {\em spectral curve} associated to $a$. We use $\pi : S_a \to \Sigma$ to denote the restriction of $\pi : tot(L) \to \Sigma$. As in \cite{hit2}, Bertini's theorem implies that $S_a$ is smooth for generic $a \in \mathcal{A}$. Moreover it can be shown that $S_a$ is smooth if and only if the corresponding fibre of the Hitchin system $h : \mathcal{M} \to \mathcal{A}$ is non-singular \cite{kp}. When this is the case the corresponding fibre of the Hitchin system is:
\begin{equation*}
h^{-1}(a) = \{ M \in Jac_{ln(n-1)/2}(S_a) \; | \; Nm(M) = L^{n(n-1)/2} \},
\end{equation*}
where $Jac_{d}(S_a)$ denotes the degree $d$ component of $Pic(S_a)$ and $Nm : Pic(S) \to Pic(\Sigma)$ is the norm map associated to $\pi : S_a \to \Sigma$ \cite{bnr}. Note that $h^{-1}(a)$ is naturally a torsor over the abelian variety $Prym(S_a, \Sigma)$, which is defined as:
\begin{equation*}
Prym(S_a , \Sigma) = \{ M \in Jac(S_a) \; | \; Nm(M) = \mathcal{O} \}.
\end{equation*}
The abelian variety $Prym(S_a , \Sigma)$ is called the {\em Prym variety} associated to $\pi : S_a \to \Sigma$.\\

Let $q : \mathcal{S} \to \mathcal{A}^{\rm reg}$ denote the family of smooth spectral curves parametrised by $\mathcal{A}^{\rm reg}$. This may be defined as the set of pairs $(a,x) \in \mathcal{A}^{\rm reg} \times tot(L)$ such that $s_a(x) = 0$. This is a family of non-singular curves parametrised by $\mathcal{A}^{\rm reg}$ and so we may construct the relative Jacobian $j : Jac(\mathcal{S}/\mathcal{A}^{\rm reg}) \to \mathcal{A}^{\rm reg}$. Let $Nm : Jac(\mathcal{S}/\mathcal{A}^{\rm reg}) \to \mathcal{A}^{\rm reg} \times Jac(\Sigma)$ be the fibrewise norm map and let $p : Prym(\mathcal{S}/\mathcal{A}^{\rm reg}) \to \mathcal{A}^{\rm reg}$ be the fibrewise kernel of $Nm$. Then $Prym(\mathcal{S}/\mathcal{A}^{\rm reg})$ is a family of Prym varieties parametrised by $\mathcal{A}^{\rm reg}$.

\begin{remark}
If $n$ is odd or $l$ is even, then there exists on $\Sigma$ a square root $L^{(n-1)/2}$ of $L^{n-1}$. The map sending $a \in \mathcal{A}^{\rm reg}$ to $\pi^*( L^{(n-1)/2} ) \in Jac_{ln(n-1)/2}(S_a)$ is a section of $\mathcal{M}^{\rm reg} \to \mathcal{A}^{\rm reg}$ and this gives an isomorphism $\mathcal{M}^{\rm reg} \simeq Prym(\mathcal{S}/\mathcal{A}^{\rm reg})$.
\end{remark}

\begin{definition}
We define the following local systems on $\mathcal{A}^{\rm reg}$:
\begin{itemize}
\item[(1)]{$\underline{\Lambda} = Hom( R^1 h_* \mathbb{Z} , \mathbb{Z})$, the monodromy of the Hitchin fibration.}
\item[(2)]{$\underline{\Lambda}_J = Hom(R^1 j_* \mathbb{Z},\mathbb{Z})$, the monodromy of the family of Jacobians.}
\item[(3)]{$\underline{\Lambda}_P = Hom(R^1 p_* \mathbb{Z},\mathbb{Z})$, the monodromy of the family of Prym varieties.}
\item[(4)]{$\underline{\Lambda}_S = Hom(R^1q_*\mathbb{Z},\mathbb{Z})$, the monodromy of the family of spectral curves.}
\item[(5)]{$\underline{\Lambda}_\Sigma$ will denote the trivial local system with coefficient group $H^1(\Sigma , \mathbb{Z})$.}
\end{itemize}
\end{definition}

\begin{proposition}
We have the following relations between local systems:
\begin{itemize}
\item[(1)]{$\underline{\Lambda}_S \simeq \underline{\Lambda}_J$}
\item[(2)]{$\underline{\Lambda} \simeq \underline{\Lambda_P}$}
\item[(3)]{The fibrewise norm map $Nm : Jac(\mathcal{S}/\mathcal{A}^{\rm reg}) \to \mathcal{A}^{\rm reg} \times Jac(\Sigma)$ induces a short exact sequence:
\begin{equation}\label{equseslocsys}
0 \longrightarrow \underline{\Lambda}_P \longrightarrow \underline{\Lambda}_J \buildrel Nm \over \longrightarrow \underline{\Lambda}_\Sigma \longrightarrow 0.
\end{equation}
}
\end{itemize}

\end{proposition}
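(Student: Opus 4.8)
The plan is to verify all three isomorphisms fibrewise over $\mathcal{A}^{\rm reg}$, since the existence of a morphism of local systems and its exactness are local conditions. First I would record that the integral cohomology of a smooth projective curve, of $\Sigma$, and of an abelian variety are all torsion-free, so by universal coefficients $Hom(R^1q_*\mathbb{Z},\mathbb{Z}) \cong R_1q_*\mathbb{Z}$ and likewise for $j$, $p$ and $h$; thus $\underline{\Lambda}, \underline{\Lambda}_J, \underline{\Lambda}_P, \underline{\Lambda}_S$ are canonically the local systems of first homology of the fibres of $h, j, p, q$, and $\underline{\Lambda}_\Sigma$ is the constant local system $H_1(\Sigma,\mathbb{Z})$. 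I would work throughout with these homology descriptions.

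For (1), I would use the relative Abel--Jacobi morphism $\mathcal{S} \to Pic^1(\mathcal{S}/\mathcal{A}^{\rm reg})$, which on the fibre over $a$ is the classical embedding $S_a \hookrightarrow Pic^1(S_a)$ of $S_a$ into this torsor over $Jac(S_a)$. Since $Jac(S_a) = H^0(S_a,\Omega^1)^*/H_1(S_a,\mathbb{Z})$ has period lattice $H_1(S_a,\mathbb{Z})$, the Abel--Jacobi map induces the identity on $H_1$ with respect to this identification, in particular an isomorphism $H_1(S_a,\mathbb{Z}) \xrightarrow{\sim} H_1(Jac(S_a),\mathbb{Z})$; these are compatible with base change, giving $R_1q_*\mathbb{Z} \cong R_1j_*\mathbb{Z}$, i.e. $\underline{\Lambda}_S \cong \underline{\Lambda}_J$. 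For (2), the point is that $h^{-1}(a)$, being the coset $\{M \in Jac_{ln(n-1)/2}(S_a) \mid Nm(M) = L^{n(n-1)/2}\}$ of the abelian subvariety $Prym(S_a,\Sigma) = \{M \mid Nm(M) = \mathcal{O}\}$ of $Pic(S_a)$, is a torsor under $Prym(S_a,\Sigma)$; relatively over $\mathcal{A}^{\rm reg}$ this exhibits $\mathcal{M}^{\rm reg} \to \mathcal{A}^{\rm reg}$ as a torsor under $p$. (When $n$ is odd or $l$ is even the torsor is trivial by the Remark, and the claim is immediate.) In general one chooses, over a small open $U \subseteq \mathcal{A}^{\rm reg}$, a section of $\mathcal{M}^{\rm reg}|_U$, which trivializes the torsor and identifies $R_1h_*\mathbb{Z}|_U$ with $R_1p_*\mathbb{Z}|_U$; any two sections differ by a section of $Prym(\mathcal{S}/\mathcal{A}^{\rm reg})|_U$ acting by translations, and translations act trivially on the first homology of the fibres, so this identification is independent of the chosen section and glues to a global isomorphism $\underline{\Lambda} = R_1h_*\mathbb{Z} \cong R_1p_*\mathbb{Z} = \underline{\Lambda}_P$.

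For (3), fix $a \in \mathcal{A}^{\rm reg}$. The discriminant of $s_a$ is a section of $L^{n(n-1)}$ of positive degree $n(n-1)l$, so it has zeros and the degree-$n$ cover $\pi : S_a \to \Sigma$ is ramified; consequently $Nm : Jac(S_a) \to Jac(\Sigma)$ is surjective with connected kernel $Prym(S_a,\Sigma)$ (a standard fact for norm maps of ramified covers, cf. \cite{bnr}), so there is a short exact sequence of abelian varieties
\begin{equation*}
0 \longrightarrow Prym(S_a,\Sigma) \longrightarrow Jac(S_a) \buildrel Nm \over \longrightarrow Jac(\Sigma) \longrightarrow 0 .
\end{equation*}
Applying the long exact homotopy sequence of this fibre bundle, and using that $\pi_{\geq 2}$ of a torus vanishes and that $Prym(S_a,\Sigma)$ is connected, I would obtain a short exact sequence $0 \to H_1(Prym(S_a,\Sigma),\mathbb{Z}) \to H_1(Jac(S_a),\mathbb{Z}) \to H_1(Jac(\Sigma),\mathbb{Z}) \to 0$, whose third arrow is, under the identification of (1), the Gysin map $\pi_* : H_1(S_a,\mathbb{Z}) \to H_1(\Sigma,\mathbb{Z})$ (since $Nm(\mathcal{O}_{S_a}(D)) = \mathcal{O}_\Sigma(\pi_*D)$). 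As these sequences are induced by the fibrewise morphism $Nm : Jac(\mathcal{S}/\mathcal{A}^{\rm reg}) \to \mathcal{A}^{\rm reg} \times Jac(\Sigma)$ of families of abelian varieties they are compatible with base change, and exactness of a sequence of local systems may be checked on stalks; this yields $0 \to \underline{\Lambda}_P \to \underline{\Lambda}_J \to \underline{\Lambda}_\Sigma \to 0$.

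The main obstacle I expect is the torsor argument in (2): making precise, without assuming a global section, that $\mathcal{M}^{\rm reg} \to \mathcal{A}^{\rm reg}$ is a locally trivial torsor under $Prym(\mathcal{S}/\mathcal{A}^{\rm reg})$, and that the triviality of the translation action on fibrewise homology upgrades the fibrewise isomorphisms to a genuine isomorphism of local systems. A secondary point requiring care is the connectedness of $\ker(Nm)$ for the covers occurring here, which is exactly what makes the third map in (3) surjective with $\mathbb{Z}$-coefficients.
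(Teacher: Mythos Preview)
Your proposal is correct and follows essentially the same approach as the paper: the Abel--Jacobi identification for (1), the torsor structure of $\mathcal{M}^{\rm reg}$ over $Prym(\mathcal{S}/\mathcal{A}^{\rm reg})$ for (2), and the short exact sequence of abelian varieties $Prym \to Jac \to Jac(\Sigma)$ for (3). The paper's proof is terser---it simply asserts the canonical isomorphism $H^1(S,\mathbb{Z})\simeq H^1(Jac(S),\mathbb{Z})$, invokes the torsor structure to get $R^1h_*\mathbb{Z}\simeq R^1p_*\mathbb{Z}$ without spelling out the local-section argument, and says only ``taking homology of the fibres''---whereas you have filled in the details (translation-invariance of homology, the homotopy long exact sequence, and connectedness of $\ker(Nm)$ via ramification) that the paper leaves implicit.
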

\begin{proof}
(1). For any smooth spectral curve $S$ there is a canonical isomorphism $H^1(S,\mathbb{Z}) \simeq H^1( Jac(S) , \mathbb{Z})$. Clearly this isomorphism extends to a canonical isomorphism of local systems $R^1 q_* \mathbb{Z} \simeq R^1 j_* \mathbb{Z}$ on $\mathcal{A}^{\rm reg}$ associated to the families $q : \mathcal{S} \to \mathcal{A}^{\rm reg}$ and $j : Jac(\mathcal{S}/\mathcal{A}^{\rm reg}) \to \mathcal{A}^{\rm reg}$. Thus $\underline{\Lambda}_S \simeq \underline{\Lambda}_J$.\\

(2). Recall that $Prym(\mathcal{S}/\mathcal{A}^{\rm reg})$ is the pre-image under the fibrewise norm map $Nm : Jac(\mathcal{S}/\mathcal{A}^{\rm reg}) \to \mathcal{A}^{\rm reg} \times Jac(\Sigma)$ of $\mathcal{A}^{\rm reg} \times \{ \mathcal{O} \}$. Similarly $\mathcal{M}^{\rm reg}$ is the pre-image under $Nm$ of $\mathcal{A}^{\rm reg} \times\{ L^{n(n-1)/2} \}$. Thus $p : Prym(\mathcal{S}/\mathcal{A}^{\rm reg}) \to \mathcal{A}^{\rm reg}$ is a bundle of abelian varieties and $h : \mathcal{M}^{\rm reg} \to \mathcal{A}^{\rm reg}$ is a bundle of torsors for $Prym(\mathcal{S}/\mathcal{A}^{\rm reg})$. In particular this gives a canonical isomorphism $R^1h_* \mathbb{Z} \simeq R^1p_* \mathbb{Z}$ of local systems. Hence $\underline{\Lambda} \simeq \underline{\Lambda_P}$. \\

(3). We have a short exact sequence of bundles of abelian varieties over $\mathcal{A}^{\rm reg}$:
\begin{equation*}
Prym(\mathcal{S}/\mathcal{A}^{\rm reg}) \to Jac(\mathcal{S}/\mathcal{A}^{\rm reg}) \to \mathcal{A}^{\rm reg} \times Jac(\Sigma).
\end{equation*}
Taking homology of the fibres gives the short exact sequence (\ref{equseslocsys}).
\end{proof}

Instead of working with local systems directly we will fix a basepoint $a_0 \in \mathcal{A}^{\rm reg}$ and work with representations of $\pi_1( \mathcal{A}^{\rm reg} , a_0)$. Let $\pi : S \to \Sigma$ denote the spectral curve corresponding to the basepoint $a_0$. We let $\Lambda_S$ denote $H^1(S,\mathbb{Z})$, let $\Lambda_\Sigma$ denote $H^1(\Sigma , \mathbb{Z})$ and let $\Lambda_P$ denote the kernel of the Gysin map $\pi_* : \Lambda_S \to \Lambda_\Sigma$. Then $\underline{\Lambda} \simeq \underline{\Lambda}_P$ corresponds to a representation $\rho_{SL} : \pi_1( \mathcal{A}^{\rm reg} , a_0) \to Aut( \Lambda_P)$ which we call the {\em monodromy representation of the $SL(n,\mathbb{C})$ Hitchin fibration}. Similarly $\underline{\Lambda}_S \simeq \underline{\Lambda}_J$ corresponds to a representation $\rho_{GL} : \pi_1( \mathcal{A}^{\rm reg} , a_0) \to Aut( \Lambda_S )$, which we refer to as the {\em monodromy representation of the $GL(n,\mathbb{C})$ Hitchin fibration}. We note that (\ref{equseslocsys}) corresponds to a short exact sequence of representations:
\begin{equation*}
0 \longrightarrow \Lambda_P \longrightarrow \Lambda_S \buildrel \pi_* \over \longrightarrow \Lambda_\Sigma \longrightarrow 0.
\end{equation*}

\begin{remark}
Although it will play no part in subsequent calculations, we note that the dual local system $\underline{\Lambda}^*$ and corresponding representation $\rho_{SL}^* : \pi_1(\mathcal{A}^{\rm reg} , a_0) \to Aut( \Lambda_P^*)$ give the monodromy of the Hitchin fibration for the group $PGL(n,\mathbb{C})$. This is an instance of Langlands duality for Hitchin systems, which in general implies that Langlands dual groups give rise to dual monodromy representations.
\end{remark}

%%%%%%%%%%%%%%%%%%%%%%%%%%%%%%%%%%%%%%%%%%%%
%%%%%%%%%%%%%%%%%%%%%%%%%%%%%%%%%%%%%%%%%%%%
% SECTION
%%%%%%%%%%%%%%%%%%%%%%%%%%%%%%%%%%%%%%%%%%%%
%%%%%%%%%%%%%%%%%%%%%%%%%%%%%%%%%%%%%%%%%%%%
\section{The lattices $\Lambda_S,\Lambda_P$}\label{secthelattices}

\subsection{Decomposition of $\Lambda_S$}\label{secdecomps}

In this section we fix a basepoint $a_0 \in \mathcal{A}^{\rm reg}$ and examine the structure of the lattices $\Lambda_S,\Lambda_P$ in detail. For this we choose $a_0$ to be of the form $a_0 = (0,0, \dots , 0 , a_n )$, for some $a_n \in H^0(\Sigma , L^n)$. The corresponding spectral curve $S$ is given by the equation $\lambda^n + a_n = 0$ and it is clear that $S$ is smooth if and only if $a_n$ has only first order zeros. Let $b_1 , b_2 , \dots , b_k \in \Sigma$ be the zeros of $a_n$, where $k = nl$ and let $u_1, \dots , u_k \in S$ be the corresponding ramification points. 

\begin{definition}
Let $D$ be the open unit disc in $\mathbb{C}$ of radius $1$ centred at $0$ and $\overline{D}$ the corresponding closed unit disc. We say that an embedding $i : \overline{D} \to \Sigma$ is a {\em trivialising disc} if $i(D)$ contains the branch points of $\pi : S \to \Sigma$ and the restriction of $S$ to $\Sigma \setminus D$ is the trivial covering space ($n$ disjoint copies of $\Sigma \setminus D$).
\end{definition}

\begin{proposition}\label{proptd}
For any $a_n \in H^0(\Sigma , L^n)$ with simple zeros, the corresponding spectral curve $\pi : S \to \Sigma$ admits a trivialising disc.
\end{proposition}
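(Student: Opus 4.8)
The plan is to prove the existence of a trivialising disc by first collecting the finitely many branch points into a small region and then understanding the monodromy of the covering $\pi : S \to \Sigma$ around a loop enclosing all of them.

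First I would recall that $\pi : S \to \Sigma$ is branched exactly over the zeros $b_1, \dots, b_k$ of $a_n$, and since $a_n$ has only simple zeros these are $k = nl$ distinct points; moreover the spectral curve $S$ is smooth and connected (it is an irreducible curve since $\lambda^n + a_n$ is an irreducible section, being linear in $a_n$ and $a_n$ generic), so $\pi$ is a connected degree $n$ branched cover. Away from the $b_i$, $\pi$ restricts to an honest covering space. The key local computation is the monodromy around a single branch point: near $b_i$, writing a local coordinate $z$ with $a_n = z \cdot (\text{unit})$ and $L$ trivialised, the curve is $\lambda^n = -z\cdot(\text{unit})$, so a small loop around $b_i$ induces the $n$-cycle $(1\,2\,\cdots\,n)$ on the $n$ sheets (in a suitable labelling), i.e. it acts as a single cyclic permutation.

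Next I would choose an embedded closed disc $\overline{D'} \subset \Sigma$ containing all the branch points $b_1, \dots, b_k$ in its interior — such a disc exists because $\Sigma$ is a surface and any finite subset of points can be enclosed by a single embedded disc (connect them by disjoint arcs into a tree and thicken). It remains to check that the restriction of $S$ to $\Sigma \setminus D'$ is the trivial covering. This is where the main point lies: the boundary $\partial D'$ is a loop enclosing all branch points, and the monodromy of $\pi$ around $\partial D'$ is the product (in an appropriate order) of the local monodromies around the individual $b_i$. Each local monodromy is a cyclic $n$-cycle. The total product of $k = nl$ such cyclic permutations — here I would use that in the cyclic spectral curve all local monodromies are conjugate to the same generator $(1\,2\,\cdots\,n)$ of $\mathbb{Z}_n$, in fact literally equal to it in the natural sheet-labelling coming from the $n$-th roots, because the deck group $\mathbb{Z}_n$ acts transitively and the ramification is totally ramified at each $b_i$ — is $(1\,2\,\cdots\,n)^{nl}$, which is the identity since the order of the $n$-cycle is $n$ and $n \mid nl$. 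Hence the monodromy of $\pi$ restricted to $\Sigma \setminus D'$, based on $\partial D'$, is trivial. Since $\Sigma \setminus D'$ is a surface with boundary whose fundamental group is generated by $\partial D'$ together with the standard genus generators, and the monodromy of the spectral cover is already constrained — I would need to check that the monodromy representation $\pi_1(\Sigma \setminus \{b_1,\dots,b_k\}) \to S_n$ for the cyclic spectral curve actually takes values in the cyclic subgroup $\mathbb{Z}_n \subset S_n$ (which follows from the $\mathbb{Z}_n$-Galois action: the cover is Galois over $\Sigma \setminus \{b_i\}$ with group $\mathbb{Z}_n$, so the monodromy factors through $\mathbb{Z}_n$ acting on itself by translation) — and that the composite $\pi_1(\Sigma \setminus D') \to \pi_1(\Sigma) \to H_1(\Sigma;\mathbb{Z}_n)$, combined with triviality around $\partial D'$, forces the whole monodromy on $\Sigma \setminus D'$ to be trivial. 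Concretely: $\pi_1(\Sigma \setminus D')$ is free, and the monodromy on $\Sigma \setminus D'$ factors through $H_1(\Sigma;\mathbb{Z}_n)$ because it is abelian-group-valued and $\partial D'$ maps to $0$ there; but the class in $H^1(\Sigma;\mathbb{Z}_n)$ classifying the $\mathbb{Z}_n$-cover must vanish, since $a_n$ is an $n$-th power divisor class up to the issue that $\mathcal{O}(b_1 + \cdots + b_k) = L^n$ is an $n$-th power — so the cover $\Sigma \setminus D' \to \ldots$ of degree $n$ with structure group $\mathbb{Z}_n$ is classified by a torsion class that is the mod-$n$ reduction of the divisor of $a_n$, which is $L^n \equiv 0 \pmod{n}$ in the relevant sense. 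I would then shrink $\overline{D'}$ and rescale to get a disc $i : \overline{D} \to \Sigma$ of the required form, completing the proof.

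The main obstacle is the bookkeeping in the last step: making precise that the monodromy of $S|_{\Sigma \setminus D'}$ is trivial, not merely that the boundary monodromy is trivial. The cleanest route is the cohomological one — identifying the $\mathbb{Z}_n$-cover with the class $c \in H^1(\Sigma \setminus \{b_i\}; \mathbb{Z}_n)$ determined by $[\text{div}(a_n)] = [L^n]$, noting $[L^n] = n[L] \equiv 0$ in $H^1(\Sigma;\mathbb{Z}_n)$, and carefully tracking that restriction to $H^1(\Sigma \setminus D';\mathbb{Z}_n)$ kills $c$ because the relevant relative cohomology sequence pins down $c|_{\Sigma \setminus D'}$ as the image of a class supported on the branch points — which restricts to zero away from a neighbourhood of them. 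I expect this cohomological packaging to be the delicate part; the local monodromy computation and the existence of an enclosing disc are routine.
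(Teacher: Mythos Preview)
Your argument contains a genuine gap: you are attempting to show that for \emph{any} embedded disc $D'$ containing all the branch points, the cover $S|_{\Sigma\setminus D'}$ is trivial. This is false in general, and the cohomological packaging you sketch cannot rescue it.

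Here is the issue. The unramified $\mathbb{Z}_n$-cover on $\Sigma' = \Sigma\setminus\{b_1,\dots,b_k\}$ is classified by the monodromy homomorphism $\phi_S : H_1(\Sigma';\mathbb{Z}) \to \mathbb{Z}_n$, which indeed sends each small loop $l_i$ to $1$. A choice of disc $D'$ determines a splitting $s : H_1(\Sigma;\mathbb{Z}) \to H_1(\Sigma';\mathbb{Z})$, and triviality of the cover on $\Sigma\setminus D'$ is exactly the condition $\phi_S\circ s = 0$. But different discs give different splittings $s$, and changing $s$ by adding a multiple of some $l_i$ to a generator changes $\phi_S\circ s$ by the corresponding multiple of $1\in\mathbb{Z}_n$. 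So if $\phi_S\circ s = 0$ for one disc, it will be nonzero for many others. Your observation that $\mathcal{O}(\sum b_i) = L^n$ is an $n$-th power in $\mathrm{Pic}(\Sigma)$ only says that \emph{some} $\mathbb{Z}_n$-cover with the prescribed local monodromies exists; the set of all such covers is a torsor over $H^1(\Sigma;\mathbb{Z}_n)$, and nothing in your argument singles out the spectral curve's cover as the one whose class, relative to the chosen splitting, vanishes. The sentence ``the class in $H^1(\Sigma;\mathbb{Z}_n)$ classifying the $\mathbb{Z}_n$-cover must vanish'' conflates an existence obstruction (which does vanish) with the actual classifying invariant relative to your chosen disc (which need not).

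The paper's proof takes the opposite route: start with an arbitrary disc $i:\overline{D}\to\Sigma$ containing the branch points, compute how the composite $\phi_S\circ s_i$ changes when $i$ is replaced by $\tau_l\circ i$ for a Dehn twist $\tau_l$ around a loop $l$ passing near a single branch point, and show that such Dehn twists can be chosen to kill $\phi_S\circ s_i$ completely (using nondegeneracy of the intersection pairing on $H_1(\Sigma;\mathbb{Z})$). In other words, the content of the proposition is precisely that a \emph{good} disc exists, not that every disc works; the Dehn-twist modification is the missing idea.
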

\begin{proof}
Let $\Sigma' = \Sigma \setminus \{b_1 , \dots , b_k \}$. Let $C \subset H_1( \Sigma' , \mathbb{Z})$ be the subgroup generated by cycles $l_1,l_2, \dots l_k$ around the points $b_1, \dots , b_k$, so that we have an exact sequence:
\begin{equation*}\xymatrix{
0 \ar[r] & C \ar[r] & H_1( \Sigma' , \mathbb{Z}) \ar[r] & H_1(\Sigma , \mathbb{Z}) \ar[r] & 0.
}
\end{equation*}
The monodromy of the branched cover $S \to \Sigma$ defines a homomorphism $\phi_S : H_1(\Sigma' , \mathbb{Z}) \to \mathbb{Z}_n$ such that $\phi_S(l_i) = 1$ for each $i$. Let $i : \overline{D} \to \Sigma$ be an embedding of $\overline{D}$ in $\Sigma$ for which $i(D)$ contains the branch points. Then $i$ determines a splitting $s_i : H_1(\Sigma , \mathbb{Z}) \to H_1(\Sigma' , \mathbb{Z})$ given by the composition of the isomorphism $H_1(\Sigma, \mathbb{Z}) \simeq H_1(\Sigma \setminus i(D) , \mathbb{Z})$ with the inclusion induced  map $H_1(\Sigma \setminus i(D) , \mathbb{Z}) \to H_1(\Sigma' , \mathbb{Z})$. We have that $i$ gives a trivialising disc if and only if the composition $\phi_S \circ s_i : H_1(\Sigma , \mathbb{Z}) \to \mathbb{Z}_n$ is the trivial homomorphism.\\

Choose a branch point $b_i$ and let $l$ be an embedded loop in $\Sigma'$ with underlying homology class of the form $[l] = l_i + s_i(m)$, where $m \in H_1(\Sigma , \mathbb{Z})$ is the homology class of $l$ as a loop in $\Sigma$. Let $\tau_l : \Sigma \to \Sigma$ be a Dehn twist around $l$, supported in a neighbourhood of $l$ containing no branch points. So $\tau_l$ preserves the branch points and acts a homeomorphism $\tau_l : \Sigma' \to \Sigma'$. The composition $i' = \tau_l \circ i : \overline{D} \to \Sigma$ gives a new embedding and corresponding splitting $s_{i'} : H_1(\Sigma , \mathbb{Z}) \to H_1(\Sigma' , \mathbb{Z})$. From the commutative diagram:
\begin{equation*}\xymatrix{
& & \\
H_1(\Sigma , \mathbb{Z}) \ar[d]^-{{\tau_l}_*} \ar@/^2pc/[rr]^-{s_i} & \ar[l]_-{\cong} H_1(\Sigma \setminus i(D) , \mathbb{Z}) \ar[r] \ar[d]^-{{\tau_l}_*} & H_1(\Sigma' , \mathbb{Z}) \ar[d]^-{{\tau_l}_*} \\
H_1(\Sigma , \mathbb{Z}) \ar@/_2pc/[rr]_-{s_{i'}}  & \ar[l]_-{\cong} H_1(\Sigma \setminus i'(D), \mathbb{Z}) \ar[r] & H_1(\Sigma' , \mathbb{Z}) \\
& &
}
\end{equation*}
we see that $s_{i'} = {\tau_l}_* \circ s_i \circ {\tau_l}_*^{-1}$. One finds that $({\tau_l}_* s_i {\tau_l}_*^{-1})(a) = s_i(a) + \langle a , m \rangle l_i$, hence $(\phi_S \circ s_{i'})(a) = (\phi_S \circ s_i)(a) + \langle a , m \rangle$. From this we see that by applying to $i$ a series of Dehn twists around suitably chosen loops, we can obtain an embedding $\tilde{i}$ of $\overline{D}$ with splitting $s_{\tilde{i}}$ such that $\phi_S \circ s_{\tilde{i}}$ is trivial, as required.
\end{proof}

By Proposition \ref{proptd}, there exists a trivialising disc $i : \overline{D} \to \Sigma$. Let $D'$ be an open disc obtained from $D$ by shrinking the radius slightly, but for which $i(D')$ still contains all branch points. Let $U_0 = i(D)$ and $U_1 = \Sigma \setminus \overline{i(D')}$, where $\overline{i(D')}$ is the closure of $i(D')$ in $\Sigma$. The Mayer-Vietoris sequence applied to the cover $V_0 = \pi^{-1}(U_0), V_1 = \pi^{-1}(U_1)$ of $S$ gives:
\begin{equation*}\xymatrix{
0 \ar[r] & H_2(S , \mathbb{Z} ) \ar[r]^-{\partial} & H_1(V_0 \cap V_1 , \mathbb{Z}) \ar[r]^-{({i_0}_*, {i_1}_*)} & H_1(V_0 , \mathbb{Z}) \oplus H_1( V_1 , \mathbb{Z}) \ar[r]^-{{j_0}_*-{j_1}_*} & H_1(S,\mathbb{Z}) \ar[r] & 0,
}
\end{equation*}
where $i_a$ is the inclusion $V_0 \cap V_1 \to V_a$ and $j_a$ is the inclusion $V_a \to S$. Since $i : D \to \Sigma$ is a trivialising disc we have that $V_1$ consists of $n$ disjoint copies of $U_1$ and that the inclusion $\coprod_{i=1}^n U_1 \to \coprod_{i=1}^n \Sigma$ induces an isomorphism $H_1( V_1 , \mathbb{Z}) \to \oplus_{i=1}^n H^1(\Sigma , \mathbb{Z})$. Similarly, $V_0 \cap V_1$ is homotopy equivalent to $n$ copies of the circle. These circles correspond to the $n$ lifts to $S$ of the boundary of the disc $i(D)$, hence the map ${i_1}_*$ is trivial. Let $\Lambda_{S,0}$ be the cokernel of ${i_0}_* : H_1( V_0 \cap V_1 , \mathbb{Z}) \to H_1( V_0 , \mathbb{Z})$ and $\Lambda_{S,1} = H_1( V_1 , \mathbb{Z}) = \oplus_{i=1}^n H^1(\Sigma , \mathbb{Z})$. Then the above sequence gives an isomorphism $H_1(S,\mathbb{Z}) \simeq \Lambda_{S,0} \oplus \Lambda_{S,1}$. By Poincar\'e duality we have $\Lambda_S = H^1(S,\mathbb{Z}) \simeq H_1(S,\mathbb{Z})$, so
\begin{equation}\label{equdecomps}
\Lambda_S \simeq \Lambda_{S,0} \oplus \Lambda_{S,1}.
\end{equation}
We will make extensive use of this decomposition of $\Lambda_S$ in the following sections.

%%%%%%%%%%%%%%%%%%%%%%%%%%%%%
\subsection{$\mathbb{Z}[t]$-module structure}\label{secztmod}

Let $t : S \to S$ be the map sending $\lambda$ to $\xi \lambda$, where $\xi = e^{2\pi i/n}$. Then $t$ generates a $\mathbb{Z}_n$-action on $S$. We can thus view $\Lambda_S$ as a $\mathbb{Z}[t]$-module, where the action of $t$ satisfies $t^n = 1$. Clearly (\ref{equdecomps}) is a direct sum of $\mathbb{Z}[t]$-modules. The lattice $\Lambda_{S,1}$ was seen to consist of a direct sum of $n$ copies of $\Lambda_\Sigma = H^1(\Sigma , \mathbb{Z}) \simeq H_1(\Sigma , \mathbb{Z})$. Further, $t$ acts on $\Lambda_{S,1}$ by cyclic permutation of these $n$ copies so that as $\mathbb{Z}[t]$-modules we have:
\begin{equation*}
\Lambda_{S,1} = \frac{\mathbb{Z}[t]}{\langle t^n - 1 \rangle} \otimes_{\mathbb{Z}} \Lambda_\Sigma.
\end{equation*}

Next consider $\Lambda_{S,0}$. From its definition $\Lambda_{S,0}$ fits into an exact sequence:
\begin{equation*}\xymatrix{
0 \ar[r] & H_2(S , \mathbb{Z} ) \ar[r]^-{\partial} & H_1(V_0 \cap V_1 , \mathbb{Z}) \ar[r]^-{ {i_0}_*} & H_1( V_0 , \mathbb{Z}) \ar[r] & \Lambda_{S,0} \ar[r] & 0.
}
\end{equation*}
Up to homotopy $V_0 \cap V_1$ can be identified with the boundary of $V_0$ and $i_0$ with the inclusion map. We have that $V_0$ is a degree $n$ branched cover of the unit disc $D \subset \mathbb{C}$. Applying a suitable homeomorphism to $D$, we can assume that the branch points $b_1,b_2, \dots , b_k$ lie on the real axis with $-1/2 = b_1 < b_2 < \dots < b_k = 1/2$. A deformation retraction of $D$ onto the interval $[-1/2 , 1/2]$ lifts to a deformation retraction of $V_0$ onto $\pi^{-1}( [-1/2,1/2])$.\\

The branched cover $\pi^{-1}( [-1/2 , 1/2]) \to [-1/2,1/2]$ is depicted in Figure \ref{figbranch}. For $1 \le i \le k-1$, let $\gamma_i$ be the path joining $b_i$ to $b_{i+1}$ along the interval $[b_i , b_{i+1}]$. Let $\gamma_i^1 , \gamma_i^2 , \dots , \gamma_i^n$ be the lifts of $\gamma_i$ to paths in $\pi^{-1}( [-1/2 , 1/2])$ joining $u_i$ to $u_{i+1}$, as shown in Figure \ref{figbranch}. We may order the lifts in such a way that $\gamma_i^j = t^{j-1}\gamma_i^1$. Let $c_i$ be the homology class of the $1$-cycle $\gamma_i^1 - \gamma_i^2$. Then $t^{j-1}c_i$ is the homology class of $\gamma_i^j - \gamma_i^{j+1}$. Note that $t^{n-1}c_i = \gamma_i^n - \gamma_i^1 = -(c_i + tc_i + \dots + t^{n-2}c_i)$, but that the cycles $c_i , tc_i , \dots , t^{n-2}c_i$ are independent. In fact, we have:
\begin{proposition}
The homology group $H_1( V_0 , \mathbb{Z})$ is free as a $\mathbb{Z}$-module, with basis given by the cycles $t^j c_i $ for $1 \le i \le k-1$, $0 \le j \le n-2$.
\end{proposition}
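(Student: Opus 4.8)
The plan is to analyze the branched cover $\pi^{-1}([-1/2,1/2]) \to [-1/2,1/2]$ directly, exploiting that $[-1/2,1/2]$ is contractible. First I would observe that $P := \pi^{-1}([-1/2,1/2])$ is a compact graph (one-dimensional CW complex) up to homotopy, since $V_0$ deformation retracts onto it. Its vertices are the $n$ preimages of each non-branch point together with the $k$ ramification points $u_1,\dots,u_k$; concretely it is built by taking $k$ vertices $u_1,\dots,u_k$ on the ``axis'' and, for each $i$, joining $u_i$ to $u_{i+1}$ by the $n$ edges $\gamma_i^1,\dots,\gamma_i^n$. So $P$ is connected (the cyclic monodromy $\phi_S(l_i)=1$ guarantees one can pass between all sheets), and $H_1(P,\mathbb{Z}) = H_1(V_0,\mathbb{Z})$ is free of rank equal to the first Betti number of the graph.

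Next I would count: $P$ has $V = k + (k-1)(n-1)$ vertices (the $u_i$, plus for each of the $k-1$ intervals the $n-1$ ``interior'' sheet-crossings — or more cleanly, just take the CW structure whose $0$-cells are the $k$ ramification points and whose $1$-cells are the $(k-1)n$ lifted edges $\gamma_i^j$, after collapsing each non-branch fibre, since each edge $\gamma_i^j$ runs from $u_i$ to $u_{i+1}$). With that economical CW structure, $V = k$ and the number of edges is $E = (k-1)n$, so
\begin{equation*}
\mathrm{rk}\, H_1(V_0,\mathbb{Z}) = E - V + 1 = (k-1)n - k + 1 = (k-1)(n-1).
\end{equation*}
This matches the claimed basis size: $(k-1)$ values of $i$ times $(n-1)$ values of $j \in \{0,\dots,n-2\}$. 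So it remains to show the $t^j c_i$, $1 \le i \le k-1$, $0 \le j \le n-2$, form a basis.

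For this I would build an explicit spanning tree $\mathcal{T}$ of the graph and use the standard fact that, for a connected graph with spanning tree $\mathcal{T}$, the classes of the fundamental cycles associated to the edges not in $\mathcal{T}$ form a free basis of $H_1$. Take $\mathcal{T}$ to consist of the ``first-sheet'' edges $\gamma_1^1, \gamma_2^1, \dots, \gamma_{k-1}^1$: these connect $u_1 - u_2 - \cdots - u_k$ in a path, so $\mathcal{T}$ is a tree containing all $k$ vertices. The non-tree edges are exactly $\gamma_i^j$ for $1 \le i \le k-1$ and $2 \le j \le n$. The fundamental cycle attached to the non-tree edge $\gamma_i^j$ is $\gamma_i^j$ together with the unique path in $\mathcal{T}$ back from $u_{i+1}$ to $u_i$, namely $-\gamma_i^1$; thus this fundamental cycle is $\gamma_i^j - \gamma_i^1$, whose class I will call $d_i^j$ for $2 \le j \le n$. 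These $(k-1)(n-1)$ classes are a free basis of $H_1(V_0,\mathbb{Z})$. Finally I change basis from $\{d_i^j\}$ to $\{t^{j-1}c_i\}$: since $t^{j-1}c_i = [\gamma_i^j - \gamma_i^{j+1}] = d_i^j - d_i^{j+1}$ (interpreting $d_i^1 = 0$ and keeping $d_i^{n+1} := d_i^1 = 0$ as well via $t^{n-1}c_i = \gamma_i^n - \gamma_i^1 = d_i^n$), the transition matrix between $\{d_i^2,\dots,d_i^n\}$ and $\{c_i, t c_i, \dots, t^{n-2}c_i\}$ is, for each fixed $i$, upper triangular with $\pm 1$ on the diagonal (it is the obvious ``difference'' matrix on $n-1$ elements), hence unimodular. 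Therefore $\{t^j c_i\}_{1 \le i \le k-1,\ 0 \le j \le n-2}$ is also a $\mathbb{Z}$-basis, as claimed.

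I expect the main obstacle to be purely bookkeeping: getting the CW/graph model of $\pi^{-1}([-1/2,1/2])$ precisely right — in particular being careful that collapsing each regular fibre to its $n$ points is legitimate and that the edges $\gamma_i^j$ really do form the edge set with the stated incidences — and then checking that the change-of-basis matrix relating the fundamental cycles $d_i^j$ to the cycles $t^j c_i$ is genuinely unimodular rather than merely full rank. Everything else (connectedness of $P$, freeness of $H_1$ of a graph, the spanning-tree basis theorem, the Euler characteristic count) is standard.
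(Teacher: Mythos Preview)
Your proof is correct and follows essentially the same approach as the paper, which simply says that $V_0$ deformation retracts onto $\pi^{-1}([-1/2,1/2])$ and then declares the result ``easily seen to follow'' from the figure. Your argument is precisely the natural fleshing-out of that picture: model the retract as a graph with vertices $u_1,\dots,u_k$ and edges $\gamma_i^j$, pick the spanning tree of first-sheet edges, and then do a unimodular change of basis from the fundamental cycles $d_i^j=[\gamma_i^j-\gamma_i^1]$ to the $t^{j-1}c_i$. One minor slip: the transition matrix you describe is bidiagonal (hence triangular) with diagonal entries $-1$, so ``lower triangular'' would be more accurate than ``upper triangular'' with your ordering, but of course this does not affect unimodularity.
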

\begin{proof}
We have shown that $V_0$ admits a deformation retraction to $\pi^{-1}( [-1/2,1/2])$. By considering Figure \ref{figbranch}, the proposition is easily seen to follow.
\end{proof}

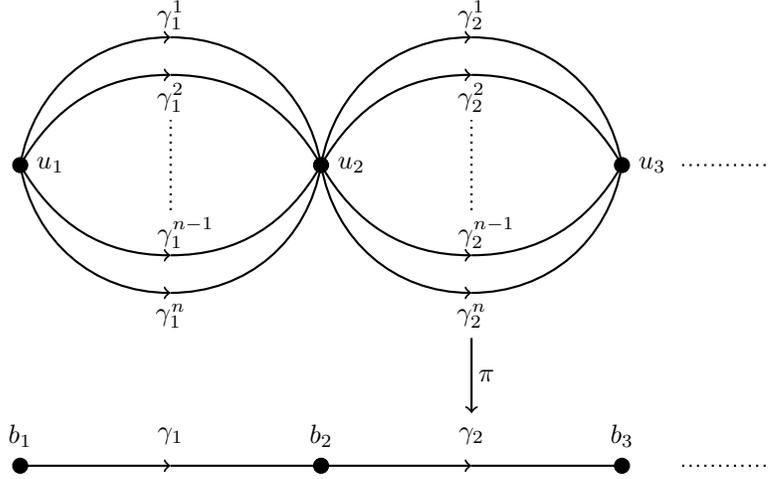
\begin{figure}
\begin{tikzpicture}
\draw [fill] (0,0) circle(0.1);
\draw [fill] (4,0) circle(0.1);
\draw [fill] (8,0) circle(0.1);

\draw [->, thick] (0,0) to [out=80, in=180] (2,1.7) ;
\draw [thick] (2,1.7) to [out=0, in=100] (4,0);
\draw [->, thick] (0,0) to [out=60, in=180] (2,1.2) ;
\draw [thick] (2,1.2) to [out=0, in=120] (4,0);
\draw [->, thick] (0,0) to [out=300, in=180] (2,-1.2) ;
\draw [thick] (2,-1.2) to [out=0, in=240] (4,0);
\draw [->, thick] (0,0) to [out=280, in=180] (2,-1.7) ;
\draw [thick] (2,-1.7) to [out=0, in=260] (4,0);
\draw[thick, dotted] (2,-0.6) -- (2,0.6) ;

\draw [->, thick] (4,0) to [out=80, in=180] (6,1.7) ;
\draw [thick] (6,1.7) to [out=0, in=100] (8,0);
\draw [->, thick] (4,0) to [out=60, in=180] (6,1.2) ;
\draw [thick] (6,1.2) to [out=0, in=120] (8,0);
\draw [->, thick] (4,0) to [out=300, in=180] (6,-1.2) ;
\draw [thick] (6,-1.2) to [out=0, in=240] (8,0);
\draw [->, thick] (4,0) to [out=280, in=180] (6,-1.7) ;
\draw [thick] (6,-1.7) to [out=0, in=260] (8,0);
\draw[thick, dotted] (6,-0.6) -- (6,0.6) ;

\node at (0.4,0) {$u_1$};
\node at (4.4,0) {$u_2$};
\node at (8.4,0) {$u_3$};

\node at (2,2) {$\gamma_1^1$};
\node at (2,0.9) {$\gamma_1^2$};
\node at (2.2,-0.9) {$\gamma_1^{n-1}$};
\node at (2,-2) {$\gamma_1^n$};

\node at (6,2) {$\gamma_2^1$};
\node at (6,0.9) {$\gamma_2^2$};
\node at (6.2,-0.9) {$\gamma_2^{n-1}$};
\node at (6,-2) {$\gamma_2^n$};

\draw[thick, dotted] (8.8,0) -- (10,0);

\draw[thick , ->] (6,-2.3) --(6,-3.3) ;
\node at (6.2,-2.8) {$\pi$};

\draw [fill] (0,-4) circle(0.1);
\draw [fill] (4,-4) circle(0.1);
\draw [fill] (8,-4) circle(0.1);

\node at (0,-3.6) {$b_1$};
\node at (4,-3.6) {$b_2$};
\node at (8,-3.6) {$b_3$};

\draw[thick, ->] (0,-4) --(2,-4);
\draw[thick] (2,-4) --(4,-4);
\draw[thick, ->] (4,-4) --(6,-4);
\draw[thick] (6,-4) --(8,-4);

\node at (2,-3.6) {$\gamma_1$};
\node at (6,-3.6) {$\gamma_2$};

\draw[thick, dotted] (8.8,-4) -- (10,-4);

\end{tikzpicture}
\caption{The branched covering $\pi^{-1}( [-1/2,1/2] ) \to [-1/2 , 1/2]$}\label{figbranch}
\end{figure}

Next, we will determine the image of ${i_0}_* : H_1( V_0 \cap V_1 , \mathbb{Z} ) \to H_1( V_0 , \mathbb{Z})$. For this we introduce a convention on the ordering of the paths $\gamma_i^j$ as follows. For each $i$, let $D_i$ be a small disc in $D$ centered around the point $b_i$. We choose these discs small enough so that they are mutually disjoint. Let $\tilde{D}_i = \pi^{-1}(D_i)$. Then $\gamma_i^1 , \gamma_i^2 , \dots , \gamma_i^{n}$ divide $\tilde{D}_i$ into $n$ segments. We will assume that the lifts $\gamma_i^j$ have been ordered in such a way that for $i < i < k$ and $1 \le j \le n-1$, we have that $\gamma_{i-1}^j \cap \pi^{-1}(D_i)$ lies in the segment between $\gamma_i^j$ and $\gamma_i^{j+1}$. By induction on $i$, such an ordering exists. Henceforth we will assume that such an ordering has been chosen. Figure \ref{figorder} shows the order of paths entering and leaving the point $u_i$ under our convention.

\begin{figure}
\begin{tikzpicture}

\path (0,0) coordinate (P);
\path (60:3) coordinate (Q);
\path (120:3) coordinate (R);

\draw [thick, ->-=.5] (P) -- (3,0);
\draw [thick, ->-=.5] (P) -- (Q);
\draw [thick, ->-=.5] (P) -- (R) ;

\draw [thick, -<-=.5] (P) -- (30:3);
\draw [thick, -<-=.5] (P) -- (90:3);
\draw [thick, -<-=.5] (P) -- (150:3);

\node at (0,-0.4) {$u_i$};
\node at (3.4,0) {$\gamma_i^1$};
\node at (60:3.4) {$\gamma_i^2$};
\node at (120:3.4) {$\gamma_i^3$};
\node at (30:3.4) {$\gamma_{i-1}^1$};
\node at (90:3.4) {$\gamma_{i-1}^2$};
\node at (150:3.4) {$\gamma_{i-1}^3$};

\draw [thick, dotted] (-2,0) arc (180:340:2);

\end{tikzpicture}
\caption{Ordering convention for the paths $\gamma_i^j$}\label{figorder}
\end{figure}
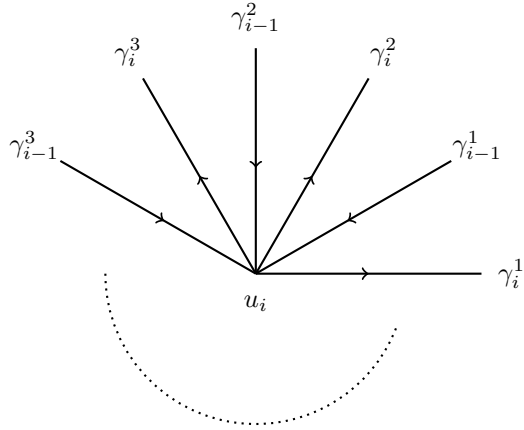

\begin{proposition}\label{propboundary}
The image of ${i_0}_* : H_1( V_0 \cap V_1 , \mathbb{Z} ) \to H_1( V_0 , \mathbb{Z})$ is the subgroup spanned by $\partial , t\partial , \dots , t^{n-1}\partial$, where

\begin{equation*}
\partial = c_1 + (1+t)c_2 + (1+t+t^2)c_3 + \dots + (1 + t + \dots + t^{n-2})c_{k-1}.
\end{equation*}
\end{proposition}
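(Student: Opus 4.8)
The plan is to identify $V_0 \cap V_1$, up to homotopy, with the $n$ lifts to $S$ of the boundary circle of the trivialising disc, and then to compute the class in $H_1(V_0,\mathbb{Z})$ of each of these circles in terms of the basis $\{t^j c_i\}$ furnished by the previous proposition. Since $t$ permutes these $n$ boundary lifts cyclically, it suffices to compute the class of a single one of them, call it $\partial$, and the image of ${i_0}_*$ is then automatically the $\mathbb{Z}[t]$-submodule of $H_1(V_0,\mathbb{Z})$ generated by $\partial$, i.e.\ the span of $\partial, t\partial, \dots, t^{n-1}\partial$. (One should note in passing that $\sum_{j=0}^{n-1} t^j \partial = 0$ in $H_1(V_0,\mathbb{Z})$, consistent with the boundary of $V_0$ being null-homologous, so only $n-1$ of these are independent; this will drop out of the computation automatically and need not be argued separately.)

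First I would set up the picture concretely. Using the deformation retraction of $V_0$ onto $\pi^{-1}([-1/2,1/2])$ described before the proposition, together with the chosen ordering convention for the lifts $\gamma_i^j$ (Figure \ref{figorder}), a single boundary lift $\partial$ is the lift of a loop in $D$ that encircles all the branch points $b_1, \dots, b_k$ once. Pushing this loop down onto a neighbourhood of the interval $[-1/2,1/2]$, it becomes a loop that runs along just above the interval from $b_1$ to $b_k$ and then back along just below, and its chosen lift to $S$ is a concatenation of (pushoffs of) the paths $\gamma_i^j$. The key step is bookkeeping: as the loop passes from the disc $D_i$ around $b_i$ into the disc $D_{i+1}$ around $b_{i+1}$ along $\gamma_i$, the ordering convention (that $\gamma_{i-1}^j \cap \pi^{-1}(D_i)$ lies in the segment of $\tilde D_i$ between $\gamma_i^j$ and $\gamma_i^{j+1}$) dictates exactly which sheet the lift is on, and hence which $\gamma_i^j$ gets traversed, and in which direction, when we follow $\partial$ along the top and then back along the bottom of each subinterval $[b_i, b_{i+1}]$.

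Carrying out this bookkeeping: going out along the top of $[b_1,b_k]$ the lift traverses $\gamma_1^1, \gamma_2^1, \dots, \gamma_{k-1}^1$; coming back along the bottom it traverses some $\gamma_{k-1}^{a_{k-1}}, \dots, \gamma_1^{a_1}$ in reverse, where the superscripts $a_i$ are determined by how the sheets shuffle at each branch point, and the ordering convention makes these shuffles explicit and cumulative. The cancellation/telescoping of these contributions, written in terms of $c_i = [\gamma_i^1 - \gamma_i^2]$ and its $t$-translates, collapses to
\begin{equation*}
\partial = c_1 + (1+t)c_2 + (1+t+t^2)c_3 + \dots + (1+t+\dots+t^{n-2})c_{k-1},
\end{equation*}
which is the asserted formula; I would present this computation by following $\partial$ one subinterval at a time and displaying the running class. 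That the resulting classes $\partial, t\partial, \dots, t^{n-1}\partial$ span the image, and nothing more, is then immediate from the fact that the $n$ boundary lifts are exactly the $t$-orbit of $\partial$ and that $H_1(V_0\cap V_1,\mathbb{Z})$ is freely generated by them.

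The main obstacle is the second paragraph's bookkeeping: correctly tracking the sheet permutations at each branch point so that the superscripts come out right and the sum telescopes to the clean expression above. This is where the carefully chosen ordering convention (Figure \ref{figorder}) does all the work, so the crux is to state that convention precisely enough that the sheet-tracking is unambiguous, and then to verify the telescoping identity — a finite, elementary but error-prone calculation with the elements $t^j c_i$. Everything else (the homotopy model for $V_0 \cap V_1$, the $t$-equivariance, the reduction to a single boundary component) is formal.
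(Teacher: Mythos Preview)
Your proposal is correct and follows essentially the same route as the paper: identify $V_0\cap V_1$ with the $n$ boundary circles permuted cyclically by $t$, lift a loop encircling $b_1,\dots,b_k$ by splitting it into an upper arc (lifting to $\gamma_1^1+\dots+\gamma_{k-1}^1$) and a lower arc (lifting, via the ordering convention, to $-\gamma_1^2-\gamma_2^3-\dots-\gamma_{k-1}^n$), and read off the formula for $\partial$. The paper carries out exactly this computation, with the explicit lower-arc superscripts $a_i=i+1$ that your bookkeeping would produce.
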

\begin{proof}
Recall that $V_0 \cap V_1$ may be identified with the boundary of $V_0$, which a disjoint union of $n$ circles. Moreover $t : S \to S$ cyclically permutes the circles. Let $\partial_D$ be a clockwise loop in $D$ enclosing the branch points $b_1 , \dots , b_k$. Let $\partial$ be any lift of $\partial_D$ to a loop in $S$. It follows that the image of ${i_0}_*$ is spanned by the cycles $\partial , t\partial , \dots , t^{n-1}\partial$. Thus it remains to determine the cycle $\partial$. We think of $\partial$ as consisting of an upper segment $\partial^+$ and a lower segment $\partial^-$.

\begin{center}
\begin{tikzpicture}

\draw [fill] (0,0) circle(0.05);
\draw [fill] (1,0) circle(0.05);
\draw [fill] (5,0) circle(0.05);

\draw [thick, dotted] (1.4, 0)--(4.6, 0);

\draw [->, thick] (0,0) to [out=30, in=180] (2.5,1) ;
\draw [thick] (2.5,1) to [out=0, in=150] (5,0) ;

\draw [thick] (0,0) to [out=330, in=180] (2.5,-1) ;
\draw [<-, thick] (2.5,-1) to [out=0, in=210] (5,0) ;

\node at (2.5,1.4) {$\partial^+$};
\node at (2.5,-1.4) {$\partial^-$};

\node at (-0.2,0) {$b_1$};
\node at (0.8,0) {$b_2$};
\node at (5.3,0) {$b_k$};

\end{tikzpicture}
\end{center}

Due to our ordering conventions, it is easy to see that $\partial^+$ can be lifted to $\gamma_1^1 + \gamma_2^1 + \dots + \gamma_{k-1}^1$ and $\partial^-$ to $-\gamma_1^2 - \gamma_2^3 - \gamma_3^4 - \dots - \gamma_{k-1}^n$. We then have:
\begin{equation*}
\begin{aligned}
\partial &= (\gamma_1^1 - \gamma_1^2) + (\gamma_2^1 - \gamma_2^3) + \dots + (\gamma_{k-1}^1 - \gamma_{k-1}^n) \\
& = c_1 + (1+t)c_2 + (1+t+t^2)c_3 + \dots + (1+t + \dots + t^{n-2})c_{k-1}
\end{aligned}
\end{equation*}
as required.
\end{proof}

\begin{corollary}
As a $\mathbb{Z}[t]$-module, $\Lambda_{S,0}$ is isomorphic to $k-2$ copies of $\mathbb{Z}[t]/ \langle 1 + t + t^2 + \dots + t^{n-1} \rangle$.
\end{corollary}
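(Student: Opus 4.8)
The plan is to exploit the structure already assembled in this section: we have the exact sequence
\begin{equation*}\xymatrix{
0 \ar[r] & H_2(S , \mathbb{Z} ) \ar[r]^-{\partial} & H_1(V_0 \cap V_1 , \mathbb{Z}) \ar[r]^-{ {i_0}_*} & H_1( V_0 , \mathbb{Z}) \ar[r] & \Lambda_{S,0} \ar[r] & 0,
}
\end{equation*}
the Proposition identifying $H_1(V_0,\mathbb{Z})$ as the free $\mathbb{Z}$-module with basis $\{t^j c_i : 1 \le i \le k-1,\ 0 \le j \le n-2\}$, and Proposition \ref{propboundary} identifying the image of ${i_0}_*$ as the $\mathbb{Z}[t]$-submodule generated by the single element $\partial = c_1 + (1+t)c_2 + \dots + (1+t+\dots+t^{n-2})c_{k-1}$. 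So $\Lambda_{S,0}$ is the cokernel of the inclusion of the cyclic $\mathbb{Z}[t]$-module $\mathbb{Z}[t]\cdot\partial$ into $H_1(V_0,\mathbb{Z})$, and everything reduces to a computation of $\mathbb{Z}[t]$-modules.

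First I would make the $\mathbb{Z}[t]$-module structure of $H_1(V_0,\mathbb{Z})$ explicit: as noted, $t^{n-1}c_i = -(1+t+\dots+t^{n-2})c_i$, so each $c_i$ generates a copy of $N := \mathbb{Z}[t]/\langle 1+t+\dots+t^{n-1}\rangle$ (the cyclotomic-type quotient, free of rank $n-1$ over $\mathbb{Z}$ with basis $1,t,\dots,t^{n-2}$), and $H_1(V_0,\mathbb{Z}) \cong N^{\oplus(k-1)}$ as $\mathbb{Z}[t]$-modules, with the $i$-th summand generated by $c_i$. Next I would perform a change of basis on these $k-1$ cyclic generators designed to make $\partial$ into one of the new generators. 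Set $c_1' = \partial = \sum_{i=1}^{k-1}(1+t+\dots+t^{i-1})c_i$ and keep $c_i' = c_i$ for $2 \le i \le k-1$. The key point is that this substitution is invertible over $\mathbb{Z}[t]$ acting on $N^{\oplus(k-1)}$: the change-of-basis matrix is upper (or lower) triangular with $1$'s on the diagonal — $c_1'$ involves $c_1$ with coefficient $1$ — hence unimodular, so $\{c_1',c_2',\dots,c_{k-1}'\}$ is again a free $\mathbb{Z}[t]$-generating set for $N^{\oplus(k-1)}$ as an internal direct sum of $k-1$ copies of $N$. Then $\mathbb{Z}[t]\cdot\partial = \mathbb{Z}[t]\cdot c_1'$ is exactly the first $N$-summand, and the quotient is $N^{\oplus(k-2)}$, i.e. $k-2$ copies of $\mathbb{Z}[t]/\langle 1+t+\dots+t^{n-1}\rangle$, as claimed.

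The one subtlety to watch — and the step I expect to need the most care — is verifying that the "triangular with unit diagonal" heuristic really does give an isomorphism of $\mathbb{Z}[t]$-modules, not merely of abelian groups: one must check that the map $N^{\oplus(k-1)} \to N^{\oplus(k-1)}$ sending the standard generators $(c_1,\dots,c_{k-1})$ to $(c_1',\dots,c_{k-1}')$ is $\mathbb{Z}[t]$-linear (clear, since each $c_i'$ is a $\mathbb{Z}[t]$-combination of the $c_j$) and bijective. Bijectivity follows because the transformation and its inverse are both given by the same upper-triangular matrix pattern over the commutative ring $\mathbb{Z}[t]$ — the inverse of a square upper-triangular matrix with $1$'s on the diagonal over any commutative ring is again upper-triangular with $1$'s on the diagonal, and one can write it down explicitly (here one solves $c_i = c_i'$ for $i \ge 2$ and $c_1 = c_1' - \sum_{i\ge 2}(1+t+\dots+t^{i-1})c_i'$). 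Since this is an isomorphism carrying the submodule $\mathbb{Z}[t]\cdot\partial$ onto the summand $\mathbb{Z}[t]\cdot c_1' \cong N$ complemented by $\bigoplus_{i=2}^{k-1}\mathbb{Z}[t]\cdot c_i'$, the quotient $\Lambda_{S,0}$ is $\mathbb{Z}[t]$-isomorphic to $N^{\oplus(k-2)}$, completing the proof. (One should also just note in passing that $k = nl \ge 2$ so $k-2 \ge 0$ and the statement is non-vacuous.)
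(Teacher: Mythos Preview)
Your argument is correct. The paper states this corollary without proof, treating it as an immediate consequence of the preceding propositions, and what you have written is exactly the natural way to fill in the details: identify $H_1(V_0,\mathbb{Z})$ with $N^{\oplus(k-1)}$ for $N=\mathbb{Z}[t]/\langle 1+t+\dots+t^{n-1}\rangle$, observe that the image of ${i_0}_*$ is the cyclic $\mathbb{Z}[t]$-submodule generated by $\partial$, and use the unipotent change of generators to split it off as one $N$-summand.
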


We shall use the same notation $t^j c_i$ for the cycle in $H_1( V_0 , \mathbb{Z})$ and for its image in $\Lambda_{S,0}$. This should not cause confusion, since from this point onward we will only be concerned with $\Lambda_{S,0}$ as opposed to $H_1( V_0 , \mathbb{Z})$. We have that the $t^j c_i$ span $\Lambda_{S,0}$, but are not linearly independent, since by Proposition \ref{propboundary} we have the relation
\begin{equation*}
c_1 + (1+t)c_2 + (1+t+t^2)c_3 + \dots + (1+t+ \dots + t^{n-2})c_{k-1} = 0.
\end{equation*}

%%%%%%%%%%%%%%%%%%%%%%%%
\subsection{Intersection form}\label{secintf}

Consider the intersection pairing $\langle \; , \; \rangle : \Lambda_S \times \Lambda_S \to \mathbb{Z}$ defined by the cup product. Clearly $\langle \; , \; \rangle$ is $t$-invariant and the decomposition $\Lambda_S = \Lambda_{S,0} \oplus \Lambda_{S,1}$ is orthogonal. Under the identification $\Lambda_{S,1} = \oplus_{i=1}^n H^1(\Sigma , \mathbb{Z})$, we have that the restriction $\langle \; , \; \rangle|_{\Lambda_{S,1}}$ is given by $n$ copies of the usual intersection form on $H^1(\Sigma , \mathbb{Z})$. The intersection form on $\Lambda_{S,0}$ is given by the following:

\begin{proposition}\label{propints}
We have the following intersection pairings:
\begin{equation*}
\begin{aligned}
\langle c_i , tc_i \rangle &= 1, &
\langle c_i , t^j c_i \rangle &= 0, \; \text{ for } 2 \le j \le n-2, \\
\langle c_i , c_{i+1} \rangle &= 1, &
\langle c_i , t^j c_{i+1} \rangle &= 0, \; \text{ for } 2 \le j \le n-2, \\
\langle c_i , tc_{i+1} \rangle &= -1, &
\langle c_i , t^j c_{i'} \rangle & = 0, \; \text{ whenever } |i-i'| > 1.
\end{aligned}
\end{equation*}

\end{proposition}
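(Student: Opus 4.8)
The plan is to compute all the listed intersection pairings directly from the explicit cycle representatives constructed in Section \ref{secztmod}, using the geometry of the branched cover $\pi^{-1}([-1/2,1/2]) \to [-1/2,1/2]$ depicted in Figures \ref{figbranch} and \ref{figorder}. Recall that $c_i$ is the homology class of $\gamma_i^1 - \gamma_i^2$ and $t^j c_i$ is the class of $\gamma_i^{j+1} - \gamma_i^{j+2}$, so every pairing $\langle t^a c_i, t^b c_{i'}\rangle$ reduces to counting signed intersections of the arcs $\gamma_i^p$ with the arcs $\gamma_{i'}^q$ inside $S$. Since the arcs are embedded and meet only at ramification points (namely $u_i$, $u_{i+1}$ for the $\gamma_i^p$), the computation is local at these points, and the ordering convention of Figure \ref{figorder} fixes the cyclic order in which the arcs emanate from each $u_i$. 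I would first fix orientation conventions: orient $\Sigma$ (hence $D$) in the standard way, lift this orientation to $S$, and record that $\langle \alpha, \beta\rangle$ counts intersections of a representative of $\alpha$ with a representative of $\beta$ with the sign determined by whether the oriented tangent frame agrees with the orientation of $S$.

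The core of the argument is then a handful of local pictures. First I would handle $\langle c_i, tc_i\rangle$: the cycles $\gamma_i^1 - \gamma_i^2$ and $\gamma_i^2 - \gamma_i^3$ share the arc $\gamma_i^2$ but with opposite sign, so after a small homotopy pushing $\gamma_i^2$ off itself they meet only near $u_i$ and $u_{i+1}$. Using Figure \ref{figorder}, the arcs $\gamma_i^1, \gamma_i^2, \gamma_i^3$ leave $u_i$ in this cyclic order (and enter $u_{i+1}$ in a correspondingly shifted order, obtained by applying $t$), so the local intersection number at $u_i$ plus that at $u_{i+1}$ sums to $1$; this gives $\langle c_i, tc_i\rangle = 1$. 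For $\langle c_i, t^j c_i\rangle$ with $2 \le j \le n-2$, the four arcs $\gamma_i^1,\gamma_i^2,\gamma_i^{j+1},\gamma_i^{j+2}$ are pairwise distinct embedded arcs meeting only at the common endpoints $u_i, u_{i+1}$, and the cyclic ordering from Figure \ref{figorder} shows the two pairs $\{\gamma_i^1,\gamma_i^2\}$ and $\{\gamma_i^{j+1},\gamma_i^{j+2}\}$ are ``unlinked'' in the boundary circle around $u_i$ (and likewise around $u_{i+1}$), so all local contributions cancel and the pairing is $0$. The pairings $\langle c_i, t^j c_{i+1}\rangle$ are computed the same way at the single common ramification point $u_{i+1}$ (the arcs $\gamma_i^p$ end at $u_i, u_{i+1}$ while the arcs $\gamma_{i+1}^q$ start at $u_{i+1}$ and end at $u_{i+2}$): the ordering convention — which was precisely set up so that $\gamma_i^p \cap \pi^{-1}(D_{i+1})$ lies in the segment between $\gamma_{i+1}^p$ and $\gamma_{i+1}^{p+1}$ — pins down the interleaving of the two pairs of arcs around $u_{i+1}$, yielding $\langle c_i, c_{i+1}\rangle = 1$, $\langle c_i, tc_{i+1}\rangle = -1$, and $0$ for the remaining values of $j$. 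Finally, when $|i - i'| > 1$ the supporting arcs of $t^a c_i$ and $t^b c_{i'}$ have no ramification point in common and, after a homotopy into the interiors of disjoint tubular neighbourhoods of $\pi^{-1}([b_i,b_{i+1}])$ and $\pi^{-1}([b_{i'},b_{i'+1}])$, are disjoint, so the pairing vanishes.

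The main obstacle is bookkeeping of signs: one must be scrupulous about how the fixed orientation of $S$ interacts with the two endpoints $u_i$ and $u_{i+1}$ of each arc (the local model near $u_{i+1}$ is the ``time-reversal'' of the one near $u_i$, since the arcs arrive rather than depart), and about the effect of the deck transformation $t$ on orientations and on the cyclic ordering of arcs around each ramification point. The cleanest way to manage this is to work in the local model $z \mapsto z^n$ near each $u_i$: there $t$ acts by rotation by $2\pi/n$, the arcs $\gamma_i^{j}$ correspond to $n$ equally-spaced rays (as in Figure \ref{figorder}), and the signed count of how a pair of consecutive rays links another pair is an elementary computation with roots of unity. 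Assembling the two local contributions (at $u_i$ and at $u_{i+1}$) for each pairing then gives the stated values; I expect no pairing to require more than comparing two such local pictures. One should also double-check consistency with the relation $\partial = 0$ from Proposition \ref{propboundary} and with the fact that $\langle\,,\,\rangle$ is $t$-invariant and skew, which provides useful sanity checks on the signs (for instance $\langle c_i, tc_i\rangle = -\langle tc_i, c_i\rangle = -\langle c_i, t^{-1}c_i\rangle = -\langle c_i, t^{n-1}c_i\rangle$, consistent with $t^{n-1}c_i = -(c_i + tc_i + \cdots + t^{n-2}c_i)$ and the vanishing of the middle pairings).
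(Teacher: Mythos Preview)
Your proposal is correct and follows essentially the same approach as the paper: the paper's proof simply chooses transversal representatives of the cycles $t^j c_i$ and computes the intersections directly from the local pictures near the ramification points (Figures \ref{figint1} and \ref{figint2}), exactly as you outline. Your write-up is more detailed about the sign bookkeeping and includes useful consistency checks, but the underlying method is identical.
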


\begin{proof}
We choose representatives of the cycles $t^j c_i$ meeting transversally and directly compute their intersection. Figure \ref{figint1} shows the computation of $\langle c_1 , tc_1 \rangle = 1$ and Figure \ref{figint2} shows the computations of $\langle c_1 , c_2 \rangle = 1$ and $\langle c_1 , tc_2 \rangle = -1$. The remaining intersections are computed similarly.
\end{proof}

The intersection relations described in Proposition \ref{propints} may be visualised as a graph, shown in Figure \ref{figintgr}. Here the vertices correspond to the elements $\{ t^j c_i\}$, for $1 \le i \le k-1$ and $0 \le j \le n-2$. We draw an oriented edge from from $u$ to $v$ whenever $\langle u , v \rangle = 1$.

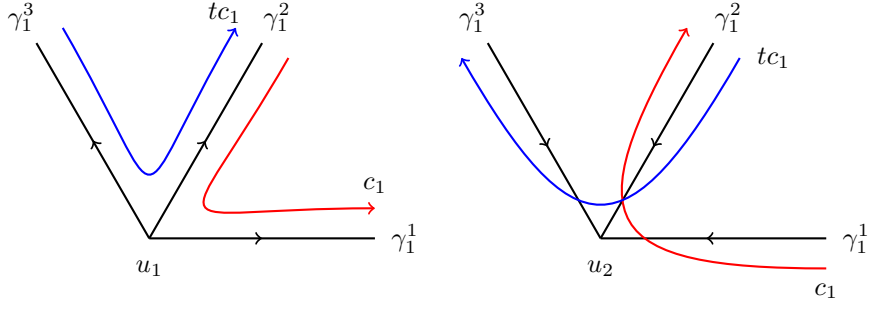
\begin{figure}
\begin{tikzpicture}
\path (0,0) coordinate (P);
\path (60:3) coordinate (Q);
\path (120:3) coordinate (R);

\draw [thick, ->-=.5] (P) -- (3,0);
\draw [thick, ->-=.5] (P) -- (Q);
\draw [thick, ->-=.5] (P) -- (R) ;

\node at (0,-0.4) {$u_1$};
\node at (3.4,0) {$\gamma_1^1$};
\node at (60:3.4) {$\gamma_1^2$};
\node at (120:3.4) {$\gamma_1^3$};

\path (3,0.4) coordinate (A);
\path (1.85,2.40) coordinate (B);
\path (0,0.4) coordinate (C);
\path (0.35,-0.2) coordinate (D);

\draw [thick, red, <-] (A) .. controls (C) and (D) .. (B);

\path (1.15,2.80) coordinate (E);
\path (-1.15,2.80) coordinate (F);
\path (-0.35,0.2) coordinate (G);
\path (0.35,0.2) coordinate (H);

\draw [thick, blue, <-] (E) .. controls (G) and (H) .. (F);

\node at (3,0.7) {$c_1$};
\node at (1,3) {$tc_1$};

\path (6,0) coordinate (S);
\path (6,0)+(60:3) coordinate (T);
\path (6,0)+(120:3) coordinate (U);

\draw [thick, -<-=.5] (S) -- (9,0);
\draw [thick, -<-=.5] (S) -- (T);
\draw [thick, -<-=.5] (S) -- (U) ;

\node at (6,-0.4) {$u_2$};
\node at (9.4,0) {$\gamma_1^1$};
\node at (7.7, 2.94) {$\gamma_1^2$};
\node at (4.3, 2.94) {$\gamma_1^3$};

\path (9,-0.4) coordinate (A1);
\path (7.15, 2.80) coordinate (B1);
\path (6,-0.4) coordinate (C1);
\path (5.65,0.2) coordinate (D1);

\draw [thick, red, ->] (A1) .. controls (C1) and (D1) .. (B1);

\path (7.85,2.40) coordinate (E1);
\path (4.15,2.40) coordinate (F1);
\path (6.35,-0.2) coordinate (G1);
\path (5.65, -0.2) coordinate (H1);

\draw [thick, blue, ->] (E1) .. controls (G1) and (H1) .. (F1);

\node at (9,-0.7) {$c_1$};
\node at (8.3,2.4) {$tc_1$};

\end{tikzpicture}
\caption{$\langle c_1 , tc_1 \rangle = 1$}\label{figint1}
\end{figure}

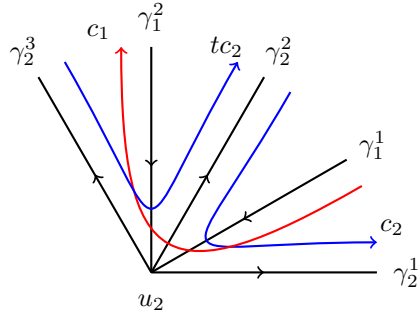
\begin{figure}
\begin{tikzpicture}
\path (0,0) coordinate (P);
\path (60:3) coordinate (Q);
\path (120:3) coordinate (R);

\path (30:3) coordinate (S);

\draw [thick, ->-=.5] (P) -- (3,0);
\draw [thick, ->-=.5] (P) -- (Q);
\draw [thick, ->-=.5] (P) -- (R) ;
\draw [thick, -<-=.5] (P) -- (S);
\draw [thick, -<-=.5] (P) -- (0,3);

\node at (0,-0.4) {$u_2$};
\node at (3.4,0) {$\gamma_2^1$};
\node at (60:3.4) {$\gamma_2^2$};
\node at (120:3.4) {$\gamma_2^3$};

\path (3,0.4) coordinate (A);
\path (1.85,2.40) coordinate (B);
\path (0,0.4) coordinate (C);
\path (0.35,-0.2) coordinate (D);

\draw [thick, blue, <-] (A) .. controls (C) and (D) .. (B);

\path (2.8,1.15) coordinate (E);
\path (-0.4,3) coordinate (F);
\path (0.2,-0.35) coordinate (G);
\path (-0.4,0) coordinate (H);

\draw [thick, red, ->] (E) .. controls (G) and (H) .. (F);

\node at (3.2,0.6) {$c_2$};
\node at (-0.7,3.2) {$c_1$};

\node at (30:3.4) {$\gamma_1^1$};
\node at (0,3.4) {$\gamma_1^2$};

\path (1.15,2.80) coordinate (E);
\path (-1.15,2.80) coordinate (F);
\path (-0.35,0.2) coordinate (G);
\path (0.35,0.2) coordinate (H);

\draw [thick, blue, <-] (E) .. controls (G) and (H) .. (F);

\node at (1,3) {$tc_2$};

\end{tikzpicture}
\caption{$\langle c_1 , c_2 \rangle = 1$, $\langle c_1 , tc_2 \rangle = -1$}\label{figint2}
\end{figure}

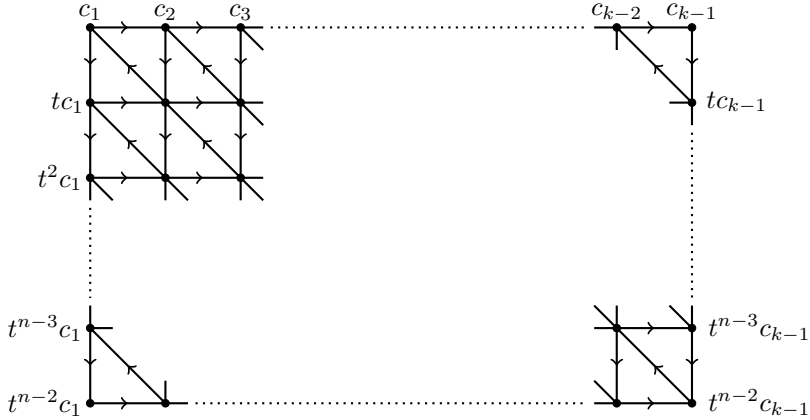
\begin{figure}
\begin{tikzpicture}

\draw [fill] (0,0) circle(0.05);
\draw [fill] (1,0) circle(0.05);
\draw [fill] (2,0) circle(0.05);
\draw [fill] (8,0) circle(0.05);
\draw [fill] (0,-1) circle(0.05);
\draw [fill] (1,-1) circle(0.05);
\draw [fill] (2,-1) circle(0.05);
\draw [fill] (8,-1) circle(0.05);
\draw [fill] (0,-2) circle(0.05);
\draw [fill] (1,-2) circle(0.05);
\draw [fill] (2,-2) circle(0.05);
\draw [fill] (0,-4) circle(0.05);
\draw [fill] (0,-5) circle(0.05);
\draw [fill] (1,-5) circle(0.05);
\draw [fill] (8,-5) circle(0.05);

\draw [fill] (8,-4) circle(0.05);
\draw [fill] (7,-5) circle(0.05);
\draw [fill] (7,-4) circle(0.05);
\draw [fill] (7,0) circle(0.05);

\draw [thick, ->-=.5] (0,0)--(1,0);
\draw [thick, ->-=.5] (1,0)--(2,0);
\draw [thick, ->-=.5] (0,-1)--(1,-1);
\draw [thick, ->-=.5] (1,-1)--(2,-1);
\draw [thick, ->-=.5] (0,-2)--(1,-2);
\draw [thick, ->-=.5] (1,-2)--(2,-2);
\draw [thick, ->-=.5] (0,0)--(0,-1);
\draw [thick, ->-=.5] (1,0)--(1,-1);
\draw [thick, ->-=.5] (2,0)--(2,-1);
\draw [thick, ->-=.5] (0,-1)--(0,-2);
\draw [thick, ->-=.5] (1,-1)--(1,-2);
\draw [thick, ->-=.5] (2,-1)--(2,-2);

\draw [thick, ->-=.5] (8,0)--(8,-1);
\draw [thick, ->-=.5] (0,-4)--(0,-5);
\draw [thick, ->-=.5] (0,-5)--(1,-5);

\draw [thick, ->-=.5] (1,-1)--(0,0);
\draw [thick, ->-=.5] (2,-1)--(1,0);
\draw [thick, ->-=.5] (1,-2)--(0,-1);
\draw [thick, ->-=.5] (2,-2)--(1,-1);
\draw [thick, ->-=.5] (1,-5)--(0,-4);

\draw [thick] (0,-2)--(0,-2.3);
\draw [thick] (1,-2)--(1,-2.3);
\draw [thick] (2,-2)--(2,-2.3);

\draw [thick] (2,0)--(2.3,0);
\draw [thick] (2,-1)--(2.3,-1);
\draw [thick] (2,-2)--(2.3,-2);

\draw [thick] (8,-1)--(8,-1.3);

\draw [thick] (0,-4)--(0,-3.7);
\draw [thick] (0,-4)--(0.3,-4);

\draw [thick] (1,-5)--(1,-4.7);
\draw [thick] (1,-5)--(1.3,-5);

\draw [thick] (2,0)--(2.3,-0.3);
\draw [thick] (2,-1)--(2.3,-1.3);
\draw [thick] (2,-2)--(2.3,-2.3);
\draw [thick] (1,-2)--(1.3,-2.3);
\draw [thick] (0,-2)--(0.3,-2.3);

\draw [thick, ->-=.5] (7,0)--(8,0);
\draw [thick, ->-=.5] (8,-1)--(7,0);

\draw [thick, ->-=.5] (7,-4)--(8,-4);
\draw [thick, ->-=.5] (7,-4)--(7,-5);
\draw [thick, ->-=.5] (8,-4)--(8,-5);
\draw [thick, ->-=.5] (7,-5)--(8,-5);
\draw [thick, ->-=.5] (8,-5)--(7,-4);

\draw [thick] (8,-4)--(8,-3.7);
\draw [thick] (8,-4)--(7.7,-3.7);

\draw [thick] (7,-4)--(7,-3.7);
\draw [thick] (7,-4)--(6.7,-3.7);
\draw [thick] (7,-4)--(6.7,-4);

\draw [thick] (7,-5)--(6.7,-4.7);
\draw [thick] (7,-5)--(6.7,-5);

\draw [thick] (7,0)--(7,-0.3);
\draw [thick] (7,0)--(6.7,0);
\draw [thick] (8,-1)--(7.7,-1);

\draw [thick, dotted] (0,-2.4)--(0,-3.6);
\draw [thick, dotted] (2.4,0)--(6.6,0);
\draw [thick, dotted] (8,-1.4)--(8,-3.6);
\draw [thick, dotted] (1.4,-5)--(6.6,-5);

\node at ( 0 , 0.2 ) {$c_1$};
\node at (-0.3 , -1 ) {$tc_1$};
\node at (-0.4 , -2 ) {$t^2c_1$};

\node at (-0.6 , -4 ) {$t^{n-3}c_1$};
\node at (-0.6 , -5 ) {$t^{n-2}c_1$};

\node at (1 , 0.2) {$c_2$};
\node at (2 , 0.2) {$c_3$};

\node at (7,0.2) {$c_{k-2}$};
\node at (8 , 0.2) {$c_{k-1}$};
\node at (8.6 , -1) {$tc_{k-1}$};
\node at (8.9 , -4) {$t^{n-3}c_{k-1}$};
\node at (8.9 , -5) {$t^{n-2}c_{k-1}$};

\end{tikzpicture}
\caption{Intersection graph for $\Lambda_{S,0}$}\label{figintgr}
\end{figure}

%%%%%%%%%%%%%%%%%%%%%%%
\subsection{Decomposition of $\Lambda_P$}\label{secdp}

Recall the decomposition $\Lambda_S = \Lambda_{S,0} \oplus \Lambda_{S,1}$. Let $i_u : \Lambda_{S,u} \to \Lambda_S$ for $u=0,1$ be the inclusions and $j_u : \Lambda_S \to \Lambda_{S,u}$ the projections. Recall also the identification $\Lambda_{S,1} = \frac{\mathbb{Z}[t]}{\langle t^n - 1 \rangle} \otimes_{\mathbb{Z}} \Lambda_\Sigma$.

\begin{proposition}\label{proppi}
The map $\pi^* : \Lambda_\Sigma \to \Lambda_S$ factors as $\pi^* = i_1 \circ \pi_1^*$, where $\pi_1^* : \Lambda_\Sigma \to \Lambda_{S,1}$ is given by $\pi_1^*(a) = (1+t+ \dots + t^{n-1})a$. The map $\pi_* : \Lambda_S \to \Lambda_\Sigma$ factors as $\pi_* = (\pi_1)_* \circ j_1$, where $(\pi_1)_* : \Lambda_{S,1} \to \Lambda_\Sigma$ is given by $(\pi_1)_*( t^j a) = a$, for $a \in \Lambda_\Sigma$.
\end{proposition}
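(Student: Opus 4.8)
The plan is to reduce both identities to computations in homology. With the Poincar\'e duality identifications $\Lambda_S = H^1(S,\mathbb{Z}) \cong H_1(S,\mathbb{Z})$ and $\Lambda_\Sigma = H^1(\Sigma,\mathbb{Z}) \cong H_1(\Sigma,\mathbb{Z})$ that are already in force, the Gysin map $\pi_*$ becomes the map induced by $\pi$ on $H_1$, and the only general properties of $\pi^*$ we shall need are that it is invariant under the deck action (because $\pi \circ t = \pi$) and that $\pi_* \circ \pi^* = n\cdot\mathrm{id}$, since $\pi$ has degree $n$. I would establish the formula for $\pi_*$ first and then deduce the one for $\pi^*$.

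For $\pi_*$: recall that $\Lambda_{S,0}$ is a quotient of $H_1(V_0)$ with $i_0$ induced by $V_0 \hookrightarrow S$, while $\Lambda_{S,1} = H_1(V_1)$ with $i_1$ induced by $V_1 \hookrightarrow S$. Since $\pi(V_0) = U_0$ is a disc, the composite $H_1(V_0) \to H_1(S) \to H_1(\Sigma)$ (second arrow $\pi_*$) factors through $H_1(U_0) = 0$, so $\pi_* \circ i_0 = 0$. On the other hand $V_1$ is a disjoint union of $n$ copies of $U_1$, each carried homeomorphically onto $U_1$ by $\pi$, and $U_1 \hookrightarrow \Sigma$ induces an isomorphism on $H_1$. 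Hence, in the identification $\Lambda_{S,1} = \frac{\mathbb{Z}[t]}{\langle t^n-1\rangle}\otimes\Lambda_\Sigma$ in which $t^j a$ is represented by a cycle for $a$ lying in a single sheet, $\pi$ collapses that sheet onto $\Sigma$ and $\pi_*(t^j a) = a$. Writing $x = i_0 j_0 x + i_1 j_1 x$ for $x \in \Lambda_S$ then gives $\pi_* = (\pi_1)_* \circ j_1$ with $(\pi_1)_*(t^j a) = a$.

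For $\pi^*$: by deck-invariance, $\pi^*\alpha$ lies in the $t$-fixed sublattice $\Lambda_S^{\,t} = \Lambda_{S,0}^{\,t} \oplus \Lambda_{S,1}^{\,t}$. I then use the $\mathbb{Z}[t]$-module descriptions from Section \ref{secztmod}: since $1 + t + \cdots + t^{n-1}$ takes the value $n \neq 0$ at $t = 1$, the operator $t - 1$ is invertible on $\mathbb{Q}[t]/\langle 1 + t + \cdots + t^{n-1}\rangle$, and as $\Lambda_{S,0}$ is free over $\mathbb{Z}$ this forces $\Lambda_{S,0}^{\,t} = 0$; while a direct computation with the group ring $\mathbb{Z}[t]/\langle t^n - 1\rangle$ gives $\Lambda_{S,1}^{\,t} = (1 + t + \cdots + t^{n-1})\Lambda_\Sigma$. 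Thus $\pi^*\alpha = i_1\bigl((1 + t + \cdots + t^{n-1})c\bigr)$ for a unique $c \in \Lambda_\Sigma$, so $\pi^*$ factors through $i_1$; and applying $\pi_*$ together with the formula just proved and $\pi_*\pi^* = n$ gives $n\alpha = \pi_*\pi^*\alpha = \sum_{j=0}^{n-1}(\pi_1)_*(t^j c) = nc$, whence $c = \alpha$. This is exactly $\pi^* = i_1 \circ \pi_1^*$ with $\pi_1^*(\alpha) = (1 + t + \cdots + t^{n-1})\alpha$. (Alternatively, $\pi^*$ is the transfer map, and one sees the formula directly by representing $\alpha$ by a cycle in $U_1$ and summing its $n$ lifts.)

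All the steps are short; the points that warrant care are the compatibility of Poincar\'e duality with the Gysin map and with $\pi^*$ (hence the meaning of $\pi_* \pi^* = n$ here), the matching of the Mayer--Vietoris sign conventions fixing the splitting $\Lambda_S = \Lambda_{S,0}\oplus\Lambda_{S,1}$ with the stated formulas, and the elementary topological facts, immediate from the trivialising disc, that $\pi(V_0)$ is a disc and that $V_1$ consists of $n$ trivial sheets over $U_1$. I do not expect a genuine obstacle: the $t$-invariance argument makes the shape of $\pi^*$ essentially forced, and $\pi_*\pi^* = n$ fixes the one remaining scalar.
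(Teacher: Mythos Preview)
Your proof is correct. For $\pi_*$ you argue exactly as the paper does: cycles in $\Lambda_{S,0}$ push down into the contractible disc $U_0$, so $\pi_*\circ i_0 = 0$, and on $\Lambda_{S,1}$ each sheet maps homeomorphically to $U_1\simeq \Sigma$, giving $(\pi_1)_*(t^ja)=a$.

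For $\pi^*$ you take a different, more algebraic route than the paper. The paper simply observes that any class in $\Lambda_\Sigma$ is represented by a cycle supported in $U_1$, whose preimage is the disjoint union of its $n$ lifts; this gives $\pi^*a = (1+t+\cdots+t^{n-1})a$ immediately. You instead deduce the factorisation from two abstract facts: $\pi^*$ lands in the $t$-fixed sublattice, and $\pi_*\pi^*=n$. Your computation of the $t$-invariants via the $\mathbb{Z}[t]$-module structure is clean and correct, and the degree identity then pins down $c=\alpha$. This argument has the minor advantage that it never requires identifying $\pi^*$ with the transfer map explicitly; on the other hand it is longer than the paper's one-line geometric observation, which you yourself record as an alternative at the end. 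Either route is fine here.
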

\begin{proof}
Let $i : D \to \Sigma$ be a trivialising disc and $\Lambda_S = \Lambda_{S,0} \oplus \Lambda_{S,1}$ the corresponding decomposition. Any class in $\Lambda_\Sigma$ can be represented by a cycle lying outside of $D$ and the factorsation $\pi^* = i_1 \circ \pi_1^*$ follows. Similarly, any class in $\Lambda_{S,0}$ is represented by a cycle whose image under $\pi$ lies in $D$. So $\pi_*$ is trivial on $\Lambda_{S,0}$ and the factorisation $\pi_* = (\pi_1)_* \circ j_1$ follows.
\end{proof}

\begin{corollary}
The decomposition $\Lambda_S = \Lambda_{S,0} \oplus \Lambda_{S,1}$ induces a similar decomposition $\Lambda_P = \Lambda_{P,0} \oplus \Lambda_{P,1}$, where $\Lambda_{P,0} = \Lambda_{S,0}$ and $\Lambda_{P,1}$ is the kernel of the map $(\pi_1)_* : \Lambda_{S,1} \to \Lambda_\Sigma$ defined in Proposition \ref{proppi}. Moreover, we have an isomorphism of $\mathbb{Z}[t]$-modules $\Lambda_{P,1} \simeq \mathbb{Z}[t]/\langle 1 + t + \dots + t^{n-1} \rangle \otimes_{\mathbb{Z}} \Lambda_\Sigma$.
\end{corollary}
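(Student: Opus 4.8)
The whole statement should fall out of Proposition \ref{proppi}. Write $\Lambda_S = \Lambda_{S,0}\oplus\Lambda_{S,1}$ and let $j_1 : \Lambda_S \to \Lambda_{S,1}$ be the projection with kernel $\Lambda_{S,0}$. By Proposition \ref{proppi} we have $\pi_* = (\pi_1)_*\circ j_1$, so for $x = x_0 + x_1$ with $x_u\in\Lambda_{S,u}$ one has $\pi_*(x) = (\pi_1)_*(x_1)$; hence $x\in\Lambda_P = \ker\pi_*$ if and only if $x_1\in\ker((\pi_1)_*)$. Therefore $\Lambda_P = \Lambda_{S,0}\oplus\ker((\pi_1)_*)$, and setting $\Lambda_{P,0} := \Lambda_{S,0}$ and $\Lambda_{P,1} := \ker((\pi_1)_*)$ gives the asserted decomposition, which is visibly the one induced by $\Lambda_S = \Lambda_{S,0}\oplus\Lambda_{S,1}$ (here one uses that $\Lambda_{S,0}\subseteq\Lambda_P$, again by Proposition \ref{proppi}). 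Since $\Lambda_{S,0},\Lambda_{S,1}$ are $\mathbb{Z}[t]$-submodules and $(\pi_1)_*(tx) = (\pi_1)_*(x)$ (it sends each $t$-orbit $t^j a$ to $a$), $\ker((\pi_1)_*)$ is a $\mathbb{Z}[t]$-submodule, so the decomposition respects the $\mathbb{Z}[t]$-module structure.

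It remains to identify $\Lambda_{P,1}$ as a $\mathbb{Z}[t]$-module. Under the identification $\Lambda_{S,1} = (\mathbb{Z}[t]/\langle t^n-1\rangle)\otimes_{\mathbb{Z}}\Lambda_\Sigma$ of \textsection\ref{secztmod}, the rule $(\pi_1)_*(t^j a) = a$ says precisely that $(\pi_1)_* = \varepsilon\otimes\mathrm{id}$, where $\varepsilon : \mathbb{Z}[t]/\langle t^n-1\rangle\to\mathbb{Z}$ is the augmentation $p(t)\mapsto p(1)$. Because $\Lambda_\Sigma$ is a free $\mathbb{Z}$-module, the functor $-\otimes_{\mathbb{Z}}\Lambda_\Sigma$ is exact, so $\Lambda_{P,1} = I\otimes_{\mathbb{Z}}\Lambda_\Sigma$, where $I := \ker(\varepsilon)$ is the augmentation ideal of $\mathbb{Z}[t]/\langle t^n-1\rangle$. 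Thus it suffices to prove that, as $\mathbb{Z}[t]$-modules, $I\simeq\mathbb{Z}[t]/\langle 1+t+\cdots+t^{n-1}\rangle$.

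For this, set $\Phi := 1+t+\cdots+t^{n-1}$ and consider the $\mathbb{Z}[t]$-linear map $f : \mathbb{Z}[t]\to\mathbb{Z}[t]/\langle t^n-1\rangle$, $q(t)\mapsto (t-1)q(t)$. Its image is the ideal $\langle t-1\rangle$ of the quotient ring, which equals $I$: one inclusion is clear from $\varepsilon(t-1)=0$, and conversely if $p(1)=0$ then $p(t) = (t-1)q(t)$ in $\mathbb{Z}[t]$ (the factor theorem over $\mathbb{Z}$, since $t-1$ is monic), so $\bar p\in\langle t-1\rangle$. The kernel of $f$ consists of the $q$ with $(t^n-1)\mid (t-1)q$ in $\mathbb{Z}[t]$; since $t^n-1 = (t-1)\Phi$ and $\mathbb{Z}[t]$ is a domain, this is equivalent to $\Phi\mid q$, i.e. $\ker f = \langle\Phi\rangle$. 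Hence $f$ descends to an isomorphism $\mathbb{Z}[t]/\langle\Phi\rangle\xrightarrow{\ \sim\ } I$, and tensoring with $\Lambda_\Sigma$ gives $\Lambda_{P,1}\simeq\mathbb{Z}[t]/\langle 1+t+\cdots+t^{n-1}\rangle\otimes_{\mathbb{Z}}\Lambda_\Sigma$.

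I do not expect a genuine obstacle here; the argument is bookkeeping. The one point that needs a little care is that the identification must be as $\mathbb{Z}[t]$-modules, not merely as lattices — which is why I phrase the last step via the augmentation ideal rather than a naive rank count. Equivalently, one may argue concretely: $\Lambda_{S,1} = \bigoplus_{i=1}^n\Lambda_\Sigma$ with $t$ the cyclic shift and $(\pi_1)_*$ the sum map, so $\Lambda_{P,1} = \{(a_1,\dots,a_n) : a_1+\cdots+a_n = 0\}$, and sending $t^{j-1}\mapsto e_j-e_{j+1}$ (with $e_n-e_1 = -(1+t+\cdots+t^{n-2})$) exhibits the required $\mathbb{Z}[t]$-module isomorphism.
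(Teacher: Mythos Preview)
Your proof is correct and is exactly the argument the paper has in mind: the corollary is stated without proof there, as an immediate consequence of Proposition~\ref{proppi} and the identification $\Lambda_{S,1}\simeq \mathbb{Z}[t]/\langle t^n-1\rangle\otimes_{\mathbb{Z}}\Lambda_\Sigma$. Your computation of the augmentation ideal via the multiplication-by-$(t-1)$ map is the clean way to pin down the $\mathbb{Z}[t]$-module structure, and the concrete alternative you sketch at the end (the sum-zero sublattice with basis $e_j-e_{j+1}$) is precisely the description the paper uses later in the proof of Corollary~\ref{corpolarization}.
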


\begin{corollary}\label{corpolarization}
The polarization type of $\Lambda_P$ is $(1,1, \dots , 1 , n , n , \dots , n )$, where $1$ occurs $(n-2)(g-1) + n(n-1)l/2 - 1$ times and $n$ occurs $g$ times.
\end{corollary}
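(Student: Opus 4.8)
The plan is to compute the polarization type of $\Lambda_P$ directly from the decomposition $\Lambda_P = \Lambda_{P,0} \oplus \Lambda_{P,1}$ and the known $\mathbb{Z}[t]$-module structure of each summand. Recall that the polarization type is the sequence of elementary divisors of the skew-symmetric intersection form $\langle \;,\; \rangle$ restricted to $\Lambda_P$; equivalently, if we put $\langle\;,\;\rangle$ into a symplectic normal form $\bigoplus_i \left(\begin{smallmatrix} 0 & d_i \\ -d_i & 0 \end{smallmatrix}\right)$ with $d_1 \mid d_2 \mid \cdots$, then $(d_1,d_2,\dots)$ is the polarization type. Since the decomposition $\Lambda_S = \Lambda_{S,0}\oplus\Lambda_{S,1}$ is orthogonal (noted in \textsection\ref{secintf}) and $\Lambda_{P,0}=\Lambda_{S,0}$, $\Lambda_{P,1}\subset\Lambda_{S,1}$, the form on $\Lambda_P$ is the orthogonal direct sum of its restrictions to $\Lambda_{P,0}$ and $\Lambda_{P,1}$, so I can treat the two pieces separately and then concatenate (and re-sort) the resulting elementary divisors.

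\textbf{The summand $\Lambda_{P,1}$.} Here $\Lambda_{P,1} \simeq \mathbb{Z}[t]/\langle 1+t+\dots+t^{n-1}\rangle \otimes_{\mathbb{Z}} \Lambda_\Sigma$ as a $\mathbb{Z}[t]$-module, and the form is the restriction of $n$ copies of the intersection form on $\Lambda_\Sigma = H^1(\Sigma,\mathbb{Z})$ (which is unimodular symplectic of rank $2g$). I would choose a symplectic basis $e_1,f_1,\dots,e_g,f_g$ of $\Lambda_\Sigma$; then $\Lambda_{S,1}$ has $\mathbb{Z}$-basis $\{t^j e_i, t^j f_i : 0\le j\le n-1\}$ and $\Lambda_{P,1}$ is spanned by these modulo the relations $(1+t+\dots+t^{n-1})e_i=0$, $(1+t+\dots+t^{n-1})f_i=0$. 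A clean way to see the form: within $\Lambda_{S,1}$, the pairing between the $t^j e_i$-block and the $t^k f_i$-block for fixed $i$ is essentially the ``cyclic'' pairing on $\mathbb{Z}[t]/\langle t^n-1\rangle$ (different $i$'s pair trivially, $e$'s pair trivially among themselves, likewise $f$'s). Concretely $\langle t^j e_i, t^k f_i\rangle = \delta_{jk}$. Passing to $\Lambda_{P,1}$ means replacing one copy of $\mathbb{Z}[t]/\langle t^n-1\rangle$ by $\mathbb{Z}[t]/\langle 1+t+\dots+t^{n-1}\rangle$ in each of the two tensor factors for each $i$. The module $\mathbb{Z}[t]/\langle 1+t+\dots+t^{n-1}\rangle$ is free of rank $n-1$ over $\mathbb{Z}$ with basis $1,t,\dots,t^{n-2}$, and the induced perfect-on-the-nose pairing becomes, in this basis, the Gram matrix with a single $-1$ in each row/column pattern coming from $t^{n-1}\equiv -(1+\dots+t^{n-2})$; its Smith normal form over $\mathbb{Z}$ has determinant $\pm n$ (this is exactly the statement that the discriminant of the $A_{n-1}$ lattice, or of $\mathbb{Z}[\zeta_n]$-type modules, is $n$). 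Thus for each $i$ the pairing between the two $(n-1)$-dimensional blocks has elementary divisors $(1,1,\dots,1,n)$ — that is, $n-2$ ones and a single $n$. Doing this for all $g$ values of $i$ contributes $g(n-2)$ ones and $g$ copies of $n$ to the polarization type, and accounts for all of $\Lambda_{P,1}$ since $\mathrm{rank}(\Lambda_{P,1}) = 2g(n-1)$.

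\textbf{The summand $\Lambda_{P,0}=\Lambda_{S,0}$.} By the corollary in \textsection\ref{secztmod}, $\Lambda_{S,0}\simeq (\mathbb{Z}[t]/\langle 1+t+\dots+t^{n-1}\rangle)^{\oplus(k-2)}$ as a $\mathbb{Z}[t]$-module, with $k=nl$, so its rank is $(n-1)(nl-2)$. I would argue that the intersection form on $\Lambda_{S,0}$ is \emph{unimodular}: $\Lambda_{S,0}$ is the Poincar\'e-dual image of the ``inner'' part of $H^1(S)$ supported over the trivialising disc, and the Mayer--Vietoris computation shows the form there is non-degenerate with trivial discriminant — alternatively, since the total form on $\Lambda_S=H^1(S,\mathbb{Z})$ is unimodular, $\Lambda_{S,1}$ restricted has discriminant $n^{2g}$ (from the previous paragraph, $g$ factors of $n$ in each of $e$- and $f$-directions gives $n^{g}\cdot n^{g}$... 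I should be careful: actually the discriminant of $\langle\;,\;\rangle|_{\Lambda_{P,1}}$ is $n^{2g}$ and of $\langle\;,\;\rangle|_{\Lambda_{S,1}}$ — before cutting down — would need a separate check), so the complementary summand $\Lambda_{S,0}$ must carry a unimodular form; hence its polarization type is $(1,1,\dots,1)$ with $(n-1)(nl-2)/2$ ones. This is the step I expect to be the main obstacle: verifying rigorously that $\langle\;,\;\rangle|_{\Lambda_{S,0}}$ is unimodular and even-rank, most cleanly by a direct inspection of the intersection graph in Figure \ref{figintgr} (which is a ``ladder'' of $k-1$ columns and $n-1$ rows with extra diagonal edges) — one exhibits an explicit symplectic basis, or computes the determinant of the Gram matrix by induction on $k$ using the block-tridiagonal structure coming from Proposition \ref{propints}. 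Once unimodularity of $\Lambda_{S,0}$ is in hand, the total count of ones is
\begin{equation*}
\frac{(n-1)(nl-2)}{2} + g(n-2) = (n-2)(g-1) + \frac{n(n-1)l}{2} - 1,
\end{equation*}
which one checks by expanding both sides, and the count of $n$'s is $g$, giving the stated polarization type $(1,\dots,1,n,\dots,n)$ and completing the proof.
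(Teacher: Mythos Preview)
Your approach is the paper's: use the orthogonal decomposition $\Lambda_P = \Lambda_{P,0} \oplus \Lambda_{P,1}$ and compute the polarization type of each summand separately. For $\Lambda_{P,1}$ the paper does exactly what you describe, phrasing it as follows: in the basis $\{(1-t)t^j a_u,\,(1-t)t^j b_u : 0\le j\le n-2\}$ the intersection matrix on each block $M_u$ is the tensor product of the $A_{n-1}$ Cartan matrix with $\left(\begin{smallmatrix}0&1\\-1&0\end{smallmatrix}\right)$, and the Smith normal form of the $A_{n-1}$ Cartan matrix is $(1,\dots,1,n)$.

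The step you flag as ``the main obstacle'' --- unimodularity of $\langle\;,\;\rangle|_{\Lambda_{S,0}}$ --- is in fact a one-line observation, and you nearly had it before second-guessing yourself. Poincar\'e duality makes the form on $\Lambda_S = H^1(S,\mathbb{Z})$ unimodular, and the decomposition $\Lambda_S = \Lambda_{S,0}\oplus\Lambda_{S,1}$ is orthogonal, so the determinant factors as $\det(\Lambda_{S,0})\cdot\det(\Lambda_{S,1}) = \pm 1$; both factors are integers, hence both restricted forms are unimodular. Your hesitation came from momentarily conflating $\Lambda_{S,1}$ (which is $n$ orthogonal copies of the unimodular lattice $\Lambda_\Sigma$, hence itself unimodular) with its proper sublattice $\Lambda_{P,1}$ (which has discriminant $n^{2g}$). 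There is no need to inspect the intersection graph or compute a block-tridiagonal determinant.
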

\begin{proof}
Since the decomposition $\Lambda_S = \Lambda_{S,0} \oplus \Lambda_{S,1}$ is orthogonal and $\langle \; , \; \rangle$ is unimodular on $\Lambda_S$, it follows that the restriction of $\langle \; , \; \rangle$ to $\Lambda_{S,0}$ or $\Lambda_{S,1}$ is unimodular. So the type of $\Lambda_{P,0} = \Lambda_{S,0}$ is $(1,1, \dots , 1)$. Choose a symplectic basis $a_1,b_1 , \dots , a_g , b_g$ of $\Lambda_\Sigma$. There is an induced decomposition of $\Lambda_{P,1}$ into $g$ orthogonal $\mathbb{Z}[t]$-submodules $M_1,M_2, \dots , M_g$, where $M_u$ is spanned by $\{ (1-t)a_u , t(1-t)a_u , \dots , t^{n-2}(1-t)a_u , (1-t)b_u , t(1-t)b_u , \dots , t^{n-2}(1-t)b_u \}$. The intersection matrix on $M_u$ can be obtained as the tensor product of the Cartan matrix for $A_{n-1}$ with the standard $2 \times 2$ symplectic matrix $\left[\begin{matrix} 0 & 1 \\ -1 & 0 \end{matrix} \right]$. It follows that the type of $M_u$ is given by the invariants of the $A_{n-1}$ Cartan matrix, which are $(1,1, \dots , 1 , n)$, where there are $n-1$ entries equal to $1$. Thus the type of $\Lambda_{P,1}$ is of the form $(1,1, \dots , n , \dots , n)$, where there are $g$ copies of $n$.
\end{proof}

In order to apply the classification of vanishing lattices in Section \ref{secthevanlat}, it will be necessary to consider cohomology with $\mathbb{Z}_2$-coefficients. Let $\Lambda_S[2] = \Lambda_S \otimes_\mathbb{Z} \mathbb{Z}_2$ be the mod $2$ reduction of $\Lambda_S$ and similarly define $\Lambda_P[2],\Lambda_\Sigma[2]$.

\begin{proposition}\label{proporthogsplit}
Suppose $n$ is odd. Then $\pi^* : \Lambda_\Sigma[2] \to \Lambda_S[2]$ gives a splitting of the sequence 
\begin{equation*}\xymatrix{
0 \ar[r] & \Lambda_P[2] \ar[r] & \Lambda_S[2] \ar[r]^-{\pi_*} & \Lambda_\Sigma[2] \ar[r] & 0.
}
\end{equation*}
The decomposition 
\begin{equation*}
\Lambda_S[2] = \Lambda_P[2] \oplus \Lambda_\Sigma[2]
\end{equation*}
induced by this splitting is orthogonal.\\

Suppose $n$ is even. Then the image of $\pi^* : \Lambda_\Sigma[2] \to \Lambda_S[2]$ is contained in $\Lambda_P[2]$. We have that $\pi^*\Lambda_\Sigma[2]$ is the null space of $\langle \; , \; \rangle|_{\Lambda_P[2]}$ and hence the quotient $H = \Lambda_P[2]/\pi^*\Lambda_\Sigma[2]$ has an induced non-degenerate alternating bilinear form $\langle \; , \; \rangle_H$. The filtration 
\begin{equation*}
\pi^* \Lambda_\Sigma[2] \subset \Lambda_P[2] \subset \Lambda_S[2]
\end{equation*}
can be split in such a way that under the induced isomorphism 
\begin{equation*}
\begin{aligned}
\Lambda_S[2] &\simeq \Lambda_\Sigma[2] \oplus (\Lambda_P[2]/\Lambda_\Sigma[2]) \oplus (\Lambda_S[2]/\Lambda_P[2]) \\
&\simeq \Lambda_\Sigma[2] \oplus H \oplus \Lambda_\Sigma[2],
\end{aligned}
\end{equation*}
we have that $\pi^*(a) = (a,0,0)$, $\pi_*(a,b,c) = c$ and 
\begin{equation*}
\langle (a,b,c) , (a',b',c') \rangle = \langle a, c' \rangle + \langle b , b' \rangle_H + \langle a' , c \rangle,
\end{equation*}
for all $a,a',c,c' \in \Lambda_\Sigma[2]$ and $b,b' \in H$.
\end{proposition}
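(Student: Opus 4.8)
The plan is to leverage three facts already in hand: the intersection form $\langle\,,\,\rangle$ is unimodular on $\Lambda_S$, hence non-degenerate on $\Lambda_S[2]$; the sequence $0 \to \Lambda_P \to \Lambda_S \to \Lambda_\Sigma \to 0$ splits (as $\Lambda_\Sigma$ is free), so it stays exact after $\otimes\,\mathbb{Z}_2$; and the explicit formulas of Proposition \ref{proppi} for $\pi^*$ and $\pi_*$. From those formulas one reads off directly the projection formula $\langle \pi^*a, x\rangle_S = \langle a, \pi_* x\rangle_\Sigma$ (for all $a \in \Lambda_\Sigma$, $x \in \Lambda_S$) and the identity $\pi_* \circ \pi^* = n\cdot\mathrm{id}_{\Lambda_\Sigma}$; moreover $\pi^*$ stays injective mod $2$, since $\pi^*a = i_1((1 + t + \cdots + t^{n-1})a)$ is visibly nonzero for $a \ne 0$ in $\Lambda_\Sigma[2]$.

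When $n$ is odd, $n \equiv 1 \pmod 2$, so $\pi_* \circ \pi^* = \mathrm{id}$ on $\Lambda_\Sigma[2]$; thus $\pi^*$ is a section of $\pi_*$ and the splitting induces $\Lambda_S[2] = \Lambda_P[2] \oplus \pi^*\Lambda_\Sigma[2]$. It is orthogonal because $\langle\pi^*a, x\rangle = \langle a, \pi_* x\rangle = 0$ for every $x \in \Lambda_P[2] = \ker\pi_*$, by the projection formula. This settles the odd case.

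When $n$ is even, $\pi_*\pi^*a = na \equiv 0$, so $\pi^*\Lambda_\Sigma[2] \subseteq \ker\pi_* = \Lambda_P[2]$, and the projection formula gives $\pi^*\Lambda_\Sigma[2] \subseteq (\Lambda_P[2])^{\perp}$, the orthogonal complement in $\Lambda_S[2]$. Since $\langle\,,\,\rangle$ is non-degenerate on $\Lambda_S[2]$ and $\dim\Lambda_P[2] = \dim\Lambda_S[2] - 2g$, we get $\dim(\Lambda_P[2])^{\perp} = 2g = \dim\pi^*\Lambda_\Sigma[2]$ (using injectivity of $\pi^*$), forcing $\pi^*\Lambda_\Sigma[2] = (\Lambda_P[2])^{\perp}$. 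Hence the radical of $\langle\,,\,\rangle|_{\Lambda_P[2]}$ equals $\Lambda_P[2] \cap (\Lambda_P[2])^{\perp} = \pi^*\Lambda_\Sigma[2]$, which is precisely the statement that $H = \Lambda_P[2]/\pi^*\Lambda_\Sigma[2]$ carries a non-degenerate induced form $\langle\,,\,\rangle_H$; also $\pi^*\Lambda_\Sigma[2]$ is isotropic, since $\langle\pi^*a, \pi^*b\rangle = \langle a, nb\rangle = 0$.

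It then remains to split the filtration $R := \pi^*\Lambda_\Sigma[2] \subset \Lambda_P[2] = R^{\perp} \subset \Lambda_S[2]$. The main obstacle will be the one step requiring real work: a standard lemma on symplectic $\mathbb{Z}_2$-spaces asserting that the isotropic subspace $R$ admits an isotropic complement $R'$ to $R^{\perp} = \Lambda_P[2]$. Concretely, take any vector-space complement $C$ of $\Lambda_P[2]$, note that $R \times C \to \mathbb{Z}_2$ is a perfect pairing (as $C \cong \Lambda_S[2]/\Lambda_P[2]$), and replace $C$ by $R' := \{c + \rho(c) : c \in C\}$, where $\rho : C \to R$ is chosen so that $(c,c') \mapsto \langle c, \rho(c')\rangle$ is an upper-triangular part of $\langle\,,\,\rangle|_C$ in some basis; then $R'$ is isotropic and still complements $\Lambda_P[2]$. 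Setting $H' := R'^{\perp} \cap \Lambda_P[2]$, a dimension count using $R' \cap \Lambda_P[2] = 0$ gives $\Lambda_P[2] = R \oplus H'$ with $H' \perp R'$, so $\Lambda_S[2] = R \oplus H' \oplus R'$. Identifying $R \cong \Lambda_\Sigma[2]$ via $\pi^*$, $R' \cong \Lambda_\Sigma[2]$ via $\pi_*$ (an isomorphism, as $R'$ complements $\ker\pi_*$), and $H' \cong H$ via the quotient $\Lambda_P[2] \to H$ (an isometry, as $H'$ complements the radical $R$), one expands $\langle\pi^*a + h + r,\ \pi^*a' + h' + r'\rangle$: every cross term among $R$, $H'$, $R'$ vanishes except $\langle\pi^*a, r'\rangle = \langle a, \pi_* r'\rangle = \langle a, c'\rangle$ and the symmetric $\langle a', c\rangle$, leaving $\langle a, c'\rangle + \langle b, b'\rangle_H + \langle a', c\rangle$; together with $\pi^*(a) = (a,0,0)$ and $\pi_*(a,b,c) = c$ this is the assertion. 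As a cross-check, one can instead work inside the explicit $\mathbb{Z}[t]$-module decomposition of $\Lambda_{P,1}$ from Corollary \ref{corpolarization}, where the radical of $\langle\,,\,\rangle|_{\Lambda_{P,1}[2]}$ matches, summand by summand, the mod-$2$ radical of the $A_{n-1}$ Cartan matrix, spanned by the classes corresponding to $\pi^*a_u$ and $\pi^*b_u$.
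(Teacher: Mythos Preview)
Your argument is correct and follows essentially the same route as the paper: the identity $\pi_*\pi^* = n\cdot\mathrm{id}$ together with the adjunction $\langle\pi^*a,x\rangle = \langle a,\pi_*x\rangle$ immediately gives the odd case and, in the even case, identifies $\pi^*\Lambda_\Sigma[2]$ with $\Lambda_P[2]^\perp$, hence with the radical of the restricted form. The only difference is that the paper dismisses the final splitting of the filtration as ``straightforward'', whereas you actually supply the Witt-type construction of an isotropic complement $R'$ and the resulting three-term decomposition; this is a welcome elaboration rather than a departure.
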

\begin{proof}
For $a \in \Lambda_\Sigma$ we have $\pi_* \pi^* (a) = na$. Thus when $n$ is odd we have that $\pi^* : \Lambda_\Sigma[2] \to \Lambda_S[2]$ is a splitting of $\pi_* : \Lambda_S[2] \to \Lambda_\Sigma[2]$. For $a \in \Lambda_\Sigma[2]$, $b \in \Lambda_P[2]$, we have $\langle \pi^*(a) , b \rangle = \langle a , \pi_*(b) \rangle = 0$. So for $n$ odd, the splitting provided by $\pi^*$ is orthogonal. Now assume that $n$ is even. We have shown that $\pi^* \Lambda_\Sigma[2] \subset \Lambda_P[2]$. By non-degeneracy of $\langle \; , \; \rangle$ on $\Lambda_S[2]$ we have that $\Lambda_P^\perp[2] = ker(\pi_*)^\perp = im(\pi^*)$, so that $\pi^* \Lambda_\Sigma[2]$ is the nullspace of $\langle \; , \; \rangle|_{\Lambda_P[2]}$ as claimed. The quotient $H = \Lambda_P[2]/\pi^*\Lambda_\Sigma[2]$ then has an induced non-degenerate alternating bilinear form $\langle \; , \; \rangle_H$. The last claim concerning the splitting of the filtration is straightforward.
\end{proof}

%%%%%%%%%%%%%%%%%%%%%%%%%%%%%%%%%%
\subsection{Quadratic functions on $\Lambda_S[2],\Lambda_P[2]$}\label{secquadratics}

Under conditions on $n$ and $l$ we will construct monodromy invariant quadratic functions on $\Lambda_S[2], \Lambda_P[2]$ and compute their Arf invariants.

\begin{definition}
Let $\overline{V}$ be a finite dimensional $\mathbb{Z}_2$-vector space and let $\langle \; , \; \rangle$ be a bilinear form on $\overline{V}$ which is alternating, i.e. $\langle x , x \rangle = 0$ for all $x \in \overline{V}$. A {\em quadratic function} associated to $(\overline{V} , \langle \; , \; \rangle)$ is a $\mathbb{Z}_2$-valued function $q$ on $\overline{V}$ satisfying:
\begin{equation*}
q(x+y) = q(x) + q(y) + \langle x , y \rangle
\end{equation*}
for all $x,y \in \overline{V}$.
\end{definition}
Let $\overline{V}_0 = \{ x \in \overline{V} \; | \; \langle x , \; \rangle = 0 \}$ and let $p$ be the dimension of $\overline{V}_0$. Then $\overline{V}/\overline{V}_0$ is even-dimensional as it carries a non-degenerate alternating bilinear form. Let $2r$ be the dimension of $\overline{V}/\overline{V}_0$, so that $\overline{V}$ has dimension $\mu = 2r + p$. If $q$ is a quadratic function, observe that $q|_{\overline{V}_0}$ is linear. If $q$ vanishes on $\overline{V}_0$, then $q$ descends to a quadratic function on $\overline{V}/\overline{V}_0$ and we define the {\em Arf invariant} ${\rm Arf}(q) \in \mathbb{Z}_2$ of $q$ to be $\sum_{i=1}^r q(e_i)q(f_i)$, where $e_1 , \dots , e_r , f_1 , \dots , f_r \in \overline{V}$ project to a symplectic basis of $\overline{V}/\overline{V}_0$. This can be shown to be independent of the choice of $e_1 , \dots , f_r$. If $q|_{\overline{V}_0}$ is not identically zero, then ${\rm Arf}(q)$ is left undefined. For fixed $r,p$ there are at most $3$ isomorphism classes of quadratic functions, distinguished by whether the Arf invariant is $0,1$ or undefined.\\

Let $X$ be a compact Riemann surface with canonical bundle $K_X$. Spin structures on $X$ may be identified with square roots of $K_X$. A spin structure gives a $KO$-orientation and hence a Gysin homomorphism $\varphi_X : KO(X) \to KO^{-2}(pt) = \mathbb{Z}_2$. If $E$ is a holomorphic vector bundle on $X$ with orthogonal structure, then $\varphi_X([E])$ is the mod $2$ index \cite{ati}:
\begin{equation*}
\varphi_X([E]) = dim \left( H^0( X , E \otimes K_X^{1/2} ) \right) \; ( \text{mod } 2).
\end{equation*}
Note that if $A$ is any line bundle on $X$, then $A\oplus A^{*}$ has an orthogonal structure given by pairing $A$ and $A^*$. By Riemann-Roch, we have $\varphi_X(A \oplus A^*) = deg(A) \; ( \text{mod } 2)$. The restriction of $\varphi_X$ to $H^1(X,\mathbb{Z}_2)$, the space of $\mathbb{Z}_2$-line bundles, is a quadratic function with constant term, in the sense that:
\begin{equation*}
\varphi_X( a + b ) = \varphi_X(a) + \varphi_X(b) + \langle a , b \rangle + \varphi_X(0).
\end{equation*}
In particular, if we let $\tilde{\varphi}_X = \varphi_X + \varphi_X(0)$, then $\tilde{\varphi}_X|_{H^1(X,\mathbb{Z}_2)}$ is a quadratic function. The spin structure $K_X^{1/2}$ is called {\em even} or {\em odd} according to whether $\varphi_X(0)$ is $0$ or $1$. If $X$ has genus $g_X$ then $\varphi_X$ has $2^{g_X-1}(2^{g_X}+1)$ zeros \cite{ati}. It follow that the Arf invariant of $\tilde{\varphi}|_{H^1(X,\mathbb{Z}_2)}$ equals $\varphi_X(0)$.\\

Now consider the case of a spectral curve $\pi : S \to \Sigma$ and let $K_S$ denote the canonical bundle of $S$. We will use the mod $2$ index to construct a monodromy invariant quadratic function on $\Lambda_S[2]$ provided that either $n$ is odd or $n$ and $l$ are both even. We will later see that when $n$ is even and $l$ is odd, there are no monodromy invariant quadratic functions on $\Lambda_S[2]$. By the adjunction formula we have $K_S \pi^*(K^{-1}) = \pi^*(L^{n-1})$. Under the assumption that either $n$ is odd or $n$ and $l$ are both even, there exists a square root $L^{(n-1)/2}$ of $L^{n-1}$ on $\Sigma$. This defines a relative $KO$-orientation for $\pi : S \to \Sigma$ and hence a Gysin homomorphism $\pi_{!} : KO(S) \to KO(\Sigma)$. If $E$ is a holomorphic vector bundle on $S$ with orthogonal structure, then $\pi_{!}$ is related to taking the direct image by:
\begin{equation*}
\pi_! [E] = [\pi_* (E \otimes \pi^*( L^{(n-1)/2}) )].
\end{equation*}
Note that by relative duality, $\pi_* (E \otimes \pi^*( L^{(n-1)/2}) )$ inherits an orthogonal structure from the orthogonal structure on $E$. Now choose a square root $K^{1/2}$ of $K$ and set $K_S^{1/2} = \pi^*( L^{(n-1)/2} K^{1/2})$. As we have chosen our spin structures to be compatible with the relative spin structure, we have that $\varphi_S = \varphi_\Sigma \circ \pi_!$. Moreover, since $K^{1/2}$ and $L^{(n-1)/2}$ are defined on $\Sigma$, it is clear that $\tilde{\varphi}_S |_{\Lambda_S[2]}$ is a monodromy invariant quadratic function on $\Lambda_S[2]$.

\begin{proposition}\label{proparfinvq}
Suppose that either $n$ is odd or $n$ and $l$ are even. Choose square roots of $K$ and $L^{(n-1)}$ on $\Sigma$ so that the function $\varphi_S : KO(S) \to \mathbb{Z}_2$ is defined. Then:
\begin{enumerate}
\item{The restriction of $\tilde{\varphi}_S$ to $\Lambda_P[2]$ is independent of the choice of square roots and defines a monodromy invariant quadratic function $q : \Lambda_P[2] \to \mathbb{Z}_2$.}
\item{If $n = 2m+1$ is odd, then $Arf(q) = (m(m-1)/2)l \; (\text{mod } 2)$.}
\item{If $n = 2m$ is even, then $Arf(q) = m(l/2) \; (\text{mod } 2)$.}
\end{enumerate}
\end{proposition}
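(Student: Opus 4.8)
The plan is to treat the three assertions in turn; the first is short, and the bulk of the work is the Arf computation in (2)--(3).

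\emph{Part (1), monodromy invariance, and the quadratic property.} First I would record that two choices of $K^{1/2}$ (resp.\ of $L^{(n-1)/2}$) differ by an element of $H^1(\Sigma,\mathbb{Z}_2)$, so the two resulting theta characteristics on $S$ differ by $\pi^*\delta$ for some $\delta\in H^1(\Sigma,\mathbb{Z}_2)$, and the mod $2$ index changes by $\tilde\varphi_S\mapsto\tilde\varphi_S+\langle\,\cdot\,,\pi^*\delta\rangle$. On $\Lambda_P[2]=\ker(\pi_*)$ the projection formula gives $\langle x,\pi^*\delta\rangle=\langle\pi_*x,\delta\rangle=0$, so the restriction is unchanged; this is part (1). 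Monodromy invariance comes the same way: the relative orientation of $\pi$ is $\pi^*L^{(n-1)/2}$, a pullback from $\Sigma$, hence extends over the whole family $q:\mathcal{S}\to\mathcal{A}^{\rm reg}$, and since the mod $2$ index is a deformation invariant, $\tilde\varphi_S$ defines a flat section of the local system of quadratic functions attached to $R^1q_*\mathbb{Z}_2$; restricting to $\underline\Lambda_P$ preserves flatness. That $q$ is a quadratic function on $(\Lambda_P[2],\langle\,,\,\rangle)$ is automatic, being the restriction of the quadratic function $\tilde\varphi_S$ on $\Lambda_S[2]$ to a subspace.

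\emph{The value of $\tilde\varphi_S$ on pulled-back classes.} The key auxiliary computation is to evaluate $\tilde\varphi_S$ on $\pi^*\Lambda_\Sigma[2]$. For $a\in\Lambda_\Sigma[2]$ with associated $\mathbb{Z}_2$-line bundle $L_a$, using $\varphi_S=\varphi_\Sigma\circ\pi_!$, the projection formula $\pi_!(\pi^*[L_a])=[L_a]\cdot\pi_!([\mathcal{O}_S])$, and $\pi_*\mathcal{O}_S=\bigoplus_{j=0}^{n-1}L^{-j}$, one gets $\pi_!([\pi^*L_a])=\bigoplus_{j=0}^{n-1}L^{(n-1)/2-j}\otimes L_a$. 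Grouping the summands into the dual pairs $L^iL_a\oplus L^{-i}L_a$ (plus the self-dual middle term $L_a$ when $n$ is odd), and applying $\varphi_\Sigma(A\oplus A^*)=\deg A\ (\mathrm{mod}\ 2)$ together with additivity of $\varphi_\Sigma$ on $KO(\Sigma)$, I expect $\tilde\varphi_S(\pi^*a)=\tilde\varphi_\Sigma(a)$ when $n$ is odd and $\tilde\varphi_S(\pi^*a)=0$ when $n$ is even. In the even case this shows $q$ vanishes on the radical $\pi^*\Lambda_\Sigma[2]$ of $\langle\,,\,\rangle|_{\Lambda_P[2]}$ (Proposition \ref{proporthogsplit}), so that ${\rm Arf}(q)$ is by definition the Arf invariant of the induced quadratic form on $H=\Lambda_P[2]/\pi^*\Lambda_\Sigma[2]$.

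\emph{Parts (2) and (3).} Here I would start from ${\rm Arf}(\tilde\varphi_S)=\varphi_S(0)=\dim H^0(S,K_S^{1/2})\ (\mathrm{mod}\ 2)$, and compute the right-hand side by pushing forward: $\dim H^0(S,K_S^{1/2})=\sum_{j=0}^{n-1}\dim H^0(\Sigma,L^{(n-1)/2-j}K^{1/2})$, pairing the terms $i,-i$ via Serre duality and Riemann--Roch so each pair contributes $il\ (\mathrm{mod}\ 2)$. Then I would invoke Proposition \ref{proporthogsplit}. For $n$ odd, $\Lambda_S[2]=\Lambda_P[2]\oplus\pi^*\Lambda_\Sigma[2]$ is an orthogonal sum of non-degenerate symplectic spaces, so Arf is additive: ${\rm Arf}(q)={\rm Arf}(\tilde\varphi_S)+{\rm Arf}(\tilde\varphi_S|_{\pi^*\Lambda_\Sigma[2]})$, and the last term is ${\rm Arf}(\tilde\varphi_\Sigma)=\varphi_\Sigma(0)$ by the previous paragraph. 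For $n$ even, the three-step filtration of Proposition \ref{proporthogsplit} exhibits $\Lambda_S[2]$ as an orthogonal sum of $(H,\langle\,,\,\rangle_H)$ with a space of hyperbolic planes on which $\tilde\varphi_S$ vanishes (again because $\tilde\varphi_S(\pi^*a)=0$), whence ${\rm Arf}(q)={\rm Arf}(q_H)={\rm Arf}(\tilde\varphi_S)=\varphi_S(0)$. Substituting the value of $\varphi_S(0)$, writing $n=2m+1$ or $n=2m$, and simplifying the sum $\sum il$ modulo $2$ should give the claimed expressions $(m(m-1)/2)l$ and $m(l/2)$.

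\emph{Expected main obstacle.} The delicate step is the even case: one must separately verify that $q$ kills the radical (this is exactly what $\tilde\varphi_S\circ\pi^*\equiv0$ buys) so that ${\rm Arf}(q)$ is even defined, and then keep enough control of the splitting in Proposition \ref{proporthogsplit} to see that the hyperbolic planes contribute nothing and only $H$ survives. The conceptual inputs are Atiyah's description of the mod $2$ index and its deformation invariance, the multiplicativity $\varphi_S=\varphi_\Sigma\circ\pi_!$, and the orthogonality assertions of Proposition \ref{proporthogsplit}; the remainder is Riemann--Roch bookkeeping on $\Sigma$.
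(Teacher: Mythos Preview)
Your proposal is correct and follows essentially the same route as the paper: independence of choices via the projection formula $\langle x,\pi^*\delta\rangle=\langle\pi_*x,\delta\rangle$; evaluation of $\tilde\varphi_S$ on $\pi^*\Lambda_\Sigma[2]$ using $\varphi_S=\varphi_\Sigma\circ\pi_!$ together with $\pi_*\mathcal{O}_S=\bigoplus_j L^{-j}$ and the identity $\varphi_\Sigma(A\oplus A^*)=\deg A$; and then additivity of the Arf invariant with respect to the orthogonal decompositions of Proposition~\ref{proporthogsplit}.

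One small imprecision worth tightening: in the even case you say $\Lambda_S[2]$ is an orthogonal sum of $H$ with hyperbolic planes ``on which $\tilde\varphi_S$ vanishes''. In fact the computation $\tilde\varphi_S(\pi^*a)=0$ only shows that $\tilde\varphi_S$ vanishes on the Lagrangian half $\pi^*\Lambda_\Sigma[2]\subset J$, not on all of $J$. This is still enough --- a quadratic form vanishing on a Lagrangian has Arf invariant zero, since in a symplectic basis $e_1,\dots,e_g,f_1,\dots,f_g$ with the $e_i$ spanning the Lagrangian one has $\sum q(e_i)q(f_i)=0$ --- so your conclusion $\mathrm{Arf}(\tilde\varphi_S|_J)=0$ stands. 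The paper handles this step by the equivalent observation that $\tilde\varphi_S$ has at least $2^{2g}$ zeros on $J$ and hence $\mathrm{Arf}(\tilde\varphi_S|_J)=0$.
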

\begin{proof}

To show independence of choices, suppose we replace $K^{1/2}$ and $L^{(n-1)/2}$ by $K^{1/2} \otimes A_1$ and $L^{(n-1)/2} \otimes A_2$, where $A_1,A_2 \in \Lambda_\Sigma[2]$. Then $K_S^{1/2}$ is replaced with $K_S^{1/2} \otimes \pi^*(A)$, where $A = A_1 \otimes A_2$. This has the effect of replacing the function $x \mapsto \varphi_S(x)$ with $x \mapsto \varphi_S(x+\pi^*[A])$. But if $x \in \Lambda_P[2]$ then $\langle x , \pi^*[A] \rangle = 0$, so
\begin{equation*}
\begin{aligned}
\varphi_S(x+\pi^*[A]) + \varphi_S(\pi^*[A]) &= \varphi_S(x) + \varphi_S(\pi^*[A]) + \langle x , \pi^*[A] \rangle + \varphi_S(0) + \varphi_S(\pi^*[A])\\
& = \varphi_S(x) + \varphi_S(0) \\
& = \tilde{\varphi}_S(x),
\end{aligned}
\end{equation*}
which shows independence of $\tilde{\varphi}_S$ on the choice of square roots.\\

Now suppose that $n = 2m+1$ is odd. If $A \in \Lambda_\Sigma[2]$, we find that
\begin{equation*}
\begin{aligned}
\varphi_S( \pi^*[A]) &= \varphi_\Sigma( \pi_! \pi^*[A]) \\
&= \varphi_\Sigma( [A \otimes ( L^m \oplus L^{m-1} \oplus \dots \oplus L^{-m} ) ]) \\
&= ml + (m-1)l + \dots + l + \varphi_\Sigma([A]) \\
& = (m(m-1)/2 )l + \varphi_\Sigma([A]).
\end{aligned}
\end{equation*}
In this calculation we have used that $\pi_* \mathcal{O}_S = \mathcal{O}_\Sigma \oplus L^{-1} \oplus L^{-2} \oplus \dots \oplus L^{-(n-1)}$ \cite{bnr}. It follows that $\varphi_S(0) = (m(m-1)/2)l + \varphi_\Sigma(0)$ and that $\tilde{\varphi}_S(\pi^*[A]) = \tilde{\varphi}_\Sigma([A])$. Thus the restriction of $\tilde{\varphi}_S$ to $\Lambda_\Sigma[2]$ has Arf invariant equal to $\varphi_\Sigma(0)$. Now since $n$ is odd, we have an orthogonal decomposition $\Lambda_S[2] = \Lambda_P[2] \oplus \Lambda_\Sigma[2]$ and from the additivity of the Arf invariant we have $\varphi_S(0) = Arf(q) + \varphi_\Sigma(0)$. Hence $Arf(q) = \varphi_S(0) + \varphi_\Sigma(0) = (m(m-1)/2)l$.\\

Lastly, suppose that $n$ and $l$ are even and set $n = 2m$. If $A \in \Lambda_\Sigma[2]$, we find that
\begin{equation*}
\begin{aligned}
\varphi_S( \pi^*[A]) &= \varphi_\Sigma( \pi_! \pi^*[A]) \\
&= \varphi_\Sigma( [A \otimes ( L^{(2m-1)/2} \oplus L^{(2m-3)/2} \oplus \dots \oplus L^{-(2m-1)/2} ) ]) \\
&= (2m-1)l/2 + (2m-3)l/2 + \dots + l/2. \\
&= m(l/2) \; ( \text{mod } 2).
\end{aligned}
\end{equation*}
In particular, $\varphi_S(0) = m(l/2)$ and $\tilde{\varphi}_S( \pi^*[A]) = 0$ for all $A \in \Lambda_\Sigma[2]$. By Proposition \ref{proporthogsplit}, we have $\Lambda_S[2] = \Lambda_\Sigma[2] \oplus H \oplus \Lambda_\Sigma[2]$, where $H$ is orthogonal to the two factors of $\Lambda_\Sigma[2]$. Let $J$ be the subspace of $\Lambda_S[2]$ given by $J = \Lambda_\Sigma[2] \oplus 0 \oplus \Lambda_\Sigma[2]$. Then we have an orthogonal decomposition $\Lambda_S[2] = H \oplus J$. By the above computation, $\tilde{\varphi}_S$ has at least $2^{2g}$ zeros on the subspace $J$, hence the Arf invariant of $\tilde{\varphi}_S |_J$ is $0$. This implies that $Arf( \tilde{\varphi}_S |_H ) = Arf( \tilde{\varphi}_S|_{\Lambda_S[2]} ) = \varphi_S(0) = m(l/2)$. Now recall that $\Lambda_P[2] = \pi^*\Lambda_\Sigma[2] \oplus H$, and that $\pi^*\Lambda_\Sigma[2]$ is the null space of $\langle \; , \; \rangle|_{\Lambda_P[2]}$. We have just shown that $\tilde{\varphi}_S$ vanishes on $\pi^* \Lambda_\Sigma[2]$, so the Arf invariant of $\tilde{\varphi}_S |_{\Lambda_P[2]}$ is defined and equals the Arf invariant of $\tilde{\varphi}_S |_H$, which is $m(l/2)$.
\end{proof}

%%%%%%%%%%%%%%%%%%%%%%%%%%%%%%%%%%%%%%%%%%%%
%%%%%%%%%%%%%%%%%%%%%%%%%%%%%%%%%%%%%%%%%%%%
% SECTION
%%%%%%%%%%%%%%%%%%%%%%%%%%%%%%%%%%%%%%%%%%%%
%%%%%%%%%%%%%%%%%%%%%%%%%%%%%%%%%%%%%%%%%%%%
\section{Constructing vanishing cycles}\label{secvc}

%%%%%%%%%%%%%%%%%%%%%%%%%%%
\subsection{General construction}
Our goal is to construct vanishing cycles associated to singular spectral curves and to show that the corresponding transvections occur in the monodromy of the Hitchin fibration. These vanishing cycles will occur as a special case of the general construction given in this section.\\

Let $X$ be a Riemann surface which may be non-compact and let $f : Y \to X$ be a degree $n$ branched cover satisfying the following conditions:
\begin{itemize}
\item[(i)]{All branch points have ramification index $n$.}
\item[(ii)]{There is an action of the cyclic group $\mathbb{Z}_n$ on $Y$ by deck transformations.}
\end{itemize}
We let $t : Y \to Y$ be a generator of the $\mathbb{Z}_n$-action. Let $b_1, b_2 , \dots , b_k$ denote the branch points of $f : Y \to X$. By assumption each branch point $b_i \in X$ has a unique ramification point $u_i \in Y$ lying over it and $f^{-1}(b_i) = u_i$.\\

Let $i \neq j$ and suppose that $\gamma : [0,1] \to X$ is an embedded path in $X$ joining $b_i$ to $b_j$ for which $\gamma(t)$ is not a branch point for any $t \in (0,1)$. Then $f^{-1}( \gamma([0,1]) )$ is the union of $n$ paths $\gamma^1 , \gamma^2 , \dots , \gamma^n : [0,1] \to Y$, each of which goes from $u_i$ to $u_j$. We may order the paths such that $t\gamma^i = \gamma^{i+1}$ for $i=1 , \dots , n-1$. We think of $\gamma^i$ as $1$-chains in $Y$ with boundary $u_j - u_i$. Thus $\gamma^1 - \gamma^2$ is a $1$-cycle in $Y$. Let $l_\gamma \in H_1(Y , \mathbb{Z})$ be the underlying homology class. Similarly $(\gamma^2 - \gamma^3) , \dots , (\gamma^{n-1} - \gamma^n) , (\gamma^n - \gamma^1)$ define cycles $tl_\gamma , \dots , t^{n-2}l_\gamma , t^{n-1}l_\gamma \in H_1(Y , \mathbb{Z})$. Note that while $l_\gamma$ depends on the choice of lift $\gamma^1$, the collection $\{ l_\gamma , tl_\gamma , t^2l_\gamma , \dots , t^{n-1}l_\gamma \}$ depends only on $\gamma$. 
\begin{definition}
We call $l_\gamma , tl_\gamma , \dots , t^{n-1}l_\gamma $ the {\em vanishing cycles associated to $\gamma$}.
\end{definition}

Let $\gamma , \gamma'$ be two embedded paths from $b_i$ to $b_j$ which avoid all other branch points. We say that $\gamma,\gamma'$ are {\em isotopic}, if they are homotopic through a path of embedded paths from $b_i$ to $b_j$ which avoid the other branch points. If $\gamma,\gamma'$ are isotopic then clearly $\gamma$ and $\gamma'$ define the same set of vanishing cycles.

%%%%%%%%%%%%%%%%%%%%%%%%%%%
\subsection{Vanishing cycles of the $A_{n-1}$ singularity}\label{secan1}
We now consider a local calculation of vanishing cycles around $A_{n-1}$ singularities. This will subsequently be converted into a global calculation for spectral curves.\\

Let $\mathbb{C}^2$ have coordinates $(\lambda,z)$ and consider the function $f : \mathbb{C}^2 \to \mathbb{C}$ given by $f(\lambda , z) = \lambda^n  +z^2$. The zero locus of $f$ is a hypersurface in $\mathbb{C}^2$ with an isolated singularity at $(0,0)$. The germ of this hypersurface around $(0,0)$ is the $A_{n-1}$ plane curve singularity. Let $\mathbb{C}^n = \mathbb{C}^2 \times \mathbb{C}^{n-2}$ have coordinates $(\lambda , z , u_1 , u_2 , \dots , u_{n-2})$ and consider the map $\tilde{f} : \mathbb{C}^n \to \mathbb{C}^{n-1}$ given by
\begin{equation*}
\tilde{f}(\lambda , z , u_1 , u_2 , \dots , u_{n-2} ) = ( \lambda^n + u_{n-2}\lambda^{n-2} + \dots + u_2 \lambda^2 + u_1 \lambda + z^2 , u_1 , u_2, \dots , u_{n-2}).
\end{equation*}
This is a versal deformation of the $A_{n-1}$ singularity. Let $B$ be a sufficiently small open ball around the origin in $\mathbb{C}^n$ with closure $\overline{B}$ and boundary $\partial B$. For any such $B$, there exists a sufficiently small open ball $B'$ around the origin in $\mathbb{C}^{n-1}$ such that $\tilde{f}$ is a submersion along $\partial B \cap \tilde{f}^{-1}(B')$. Let $D' \subset B'$ be the set of critical values of $\tilde{f}$ restricted to $\overline{B} \cap \tilde{f}^{-1}(B')$. The restriction of $\tilde{f}$ to $\overline{B} \cap \tilde{f}^{-1}( B' \setminus D')$ is a smooth fibre bundle over $B' \setminus D'$, the {\em Milnor fibration} associated to the germ of $\tilde{f}$ around $0$. Let $b \in B' \setminus D'$ be a regular value of $\tilde{f}$ in $B'$ and let $\overline{X}_b = \tilde{f}^{-1}(b) \cap \overline{B}$ be the {\em (compact) Milnor fibre}. The compact Milnor fibre is a compact manifold with boundary $\partial \overline{X}_b = \tilde{f}^{-1}(b) \cap \partial B$. The {\em geometric monodromy} \cite{loo} of the Milnor fibration is given by a representation $\rho_{geom} : \pi_1( B' \setminus D' , b ) \to Iso^0( \overline{X}_b , \partial \overline{X}_b )$, where $Iso^0( \overline{X}_b , \partial \overline{X}_b )$ is the group of relative isotopy classes of homeomorphisms of $\overline{X}_b$ which are the identity on $\partial \overline{X}_b$.\\

Let $w_0 \in \mathbb{C} \setminus \{0\}$ be small enough that $(w_0 , 0 , 0, \dots , 0) \in B'$. Then we take $b = (w_0 , 0 , \dots , 0) \in B'$ as a basepoint. The fibre of $\tilde{f}$ over $b$ is given by the equation $\lambda^n + z^2 - w_0 = 0$, which is smooth for any $w_0 \neq 0$, so $b \in B' \setminus D'$. We have:
\begin{equation*}
\overline{X}_b = \tilde{f}^{-1}(b) \cap \overline{B} = \{ (\lambda , z) \; | \; \lambda^n + z^2 - w_0 = 0, \; \; (\lambda,z ,0 , \dots , 0) \in \overline{B} \}.
\end{equation*}
Observe that $\overline{X}_b$ is a branched cover of a closed ball in $\mathbb{C}$ via the map $(\lambda , z) \mapsto z$. This is a degree $n$ cyclic branched cover with branch points $\pm\sqrt{w_0}$. As with cyclic spectral curves, we let $t : \overline{X}_b \to \overline{X}_b$ be the generator of the cyclic action given by $t(\lambda , z) = (\xi \lambda ,z)$, where $\xi = e^{2\pi i/n}$. Let $\gamma : [0,1] \to \mathbb{C}$ be the straight line in $\mathbb{C}$ joining $-\sqrt{w_0}$ to $\sqrt{w_0}$ (the choice of which square root of $w_0$ is taken to be $\sqrt{w_0}$ will be unimportant). Associated to $\gamma$ we have the vanishing cycles $l_\gamma , tl_\gamma , \dots , t^{n-1}l_\gamma$.
\begin{proposition}\label{propgeommono}
The geometric monodromy representation $\rho_{geom} : \pi_1( B' \setminus D' , b ) \to Iso^0( \overline{X}_b , \partial \overline{X}_b )$ is generated by Dehn twists of $\overline{X}_b$ around the loops $l_\gamma , tl_\gamma , \dots , t^{n-2}l_\gamma$.

\begin{remark}
Of course the Dehn twist around $t^{n-1}\gamma$ is also in the image of the geometric monodromy representation, but can be expressed in terms of $l_\gamma , \dots , t^{n-2}l_\gamma$.
\end{remark}

\end{proposition}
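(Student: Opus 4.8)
The plan is to realise the compact Milnor fibre $\overline X_b$ as a branched double cover of a disc and to exhibit the Milnor fibration of $\tilde f$ near $0$ as the universal family of such double covers, turning the geometric monodromy into the classical braid-group action on the hyperelliptic mapping class group. First I would record the two covering structures on $\overline X_b=\{\lambda^n+z^2=w_0\}\cap\overline B$: besides the degree-$n$ cyclic projection $z$ used to define the $l_\gamma$, there is the degree-$2$ projection $\lambda:\overline X_b\to\mathbb D_\lambda$ onto a disc in the $\lambda$-plane, branched over the $n$-th roots $\beta_1,\dots,\beta_n$ of $w_0$, cyclically ordered so that $t(\beta_i)=\xi\beta_i=\beta_{i+1}$. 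More generally, for $(c,u_1,\dots,u_{n-2})\in B'$ the fibre $\tilde f^{-1}(c,u)\cap\overline B$ is the double cover of $\mathbb D_\lambda$ branched over the roots of $\lambda^n+u_{n-2}\lambda^{n-2}+\dots+u_1\lambda-c$. As $(c,u)$ ranges over $B'$ these polynomials sweep out a neighbourhood of $\lambda^n$ in the space of monic degree-$n$ polynomials with vanishing $\lambda^{n-1}$-coefficient, and $D'$ is exactly the discriminant locus; so the Milnor fibration over $B'\setminus D'$ is the family of branched double covers attached to the corresponding family of $n$-point configurations in $\mathbb C$, degenerating to the point $0$.

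Next I would compute $\pi_1(B'\setminus D',b)$ and apply Birman--Hilden theory. The discriminant $D'$ is weighted-homogeneous with positive weights in the parameters $(c,u_j)$ (weights $2n$ and $2n-2j$), so $B'\setminus D'$ deformation retracts onto the complement of the link of $D'$ and $\pi_1(B'\setminus D',b)$ is isomorphic to the fundamental group of the complement of the full discriminant. That complement is the fibre of the trivial fibration over $\mathbb C$ (given by the $\lambda^{n-1}$-coefficient) whose total space is the space of monic degree-$n$ polynomials with distinct roots, so $\pi_1(B'\setminus D',b)\cong B_n$, the braid group on $n$ strands; its standard generators $\sigma_1,\dots,\sigma_{n-1}$ can be taken to be the half-twists exchanging $\beta_i$ with $\beta_{i+1}$ along the short arcs $[\beta_i,\beta_{i+1}]$. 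By the Birman--Hilden correspondence for branched double covers, the geometric monodromy of this family carries $\sigma_i$ to the Dehn twist $T_{\delta_i}\in Iso^0(\overline X_b,\partial\overline X_b)$ about the matching cycle $\delta_i=\lambda^{-1}([\beta_i,\beta_{i+1}])$, a simple closed curve; hence $\mathrm{im}(\rho_{geom})=\langle T_{\delta_1},\dots,T_{\delta_{n-1}}\rangle$.

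Finally I would identify $\delta_i$ with $t^{i-1}l_\gamma$. Projecting the lift $\gamma^1$ of the $z$-segment $\gamma$ from $-\sqrt{w_0}$ to $\sqrt{w_0}$ down via $\lambda$, one has $\lambda^n=w_0-z^2$ running out and back along the ray segment $[0,\beta_j]$ for a suitable $n$-th root $\beta_j$, so $\gamma^1$ is the $\lambda$-lift of $[0,\beta_j]$ traversed out and back, joining the two points over $\lambda=0$ (which are precisely the ramification points $u_1,u_2$ of the $z$-cover); applying $t$ shows $\gamma^2$ is the $\lambda$-lift of $[0,\beta_{j+1}]$. Therefore $l_\gamma=[\gamma^1-\gamma^2]$ is the matching cycle over the arc $[\beta_j,0]\cup[0,\beta_{j+1}]$, which is isotopic rel endpoints, in $\mathbb D_\lambda$ minus the other branch points, to the short arc $[\beta_j,\beta_{j+1}]$ (the sector of angle $2\pi/n$ it cuts off contains no other $n$-th root of $w_0$). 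Hence $l_\gamma\simeq\delta_j$, and applying powers of $t$ gives $t^{i-1}l_\gamma\simeq\delta_{j+i-1}$; after relabelling, $\langle T_{l_\gamma},T_{tl_\gamma},\dots,T_{t^{n-2}l_\gamma}\rangle=\langle T_{\delta_1},\dots,T_{\delta_{n-1}}\rangle=\mathrm{im}(\rho_{geom})$, which is the assertion.

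The main obstacle is this last step: one must juggle the two covering structures on the single surface $\overline X_b$ (the $z$-cyclic and the $\lambda$-hyperelliptic one) and justify the isotopy of arcs, together with the standard but fiddly points in the Birman--Hilden step, namely that the half-twists lift to honest Dehn twists with no boundary-twist ambiguity, whether $\partial\overline X_b$ is connected or has two components according to the parity of $n$. The quasi-homogeneity of the discriminant, the identification of its complement with a configuration space, and the braid-group description of the monodromy of a hyperelliptic family are classical; cf.\ \cite{loo}.
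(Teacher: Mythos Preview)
Your approach is essentially the same as the paper's: both realise $\overline{X}_b$ as a branched double cover via the $\lambda$-projection, identify $\pi_1(B'\setminus D',b)$ with the braid group $B_n$, observe that the half-twist generators act by Dehn twists about the matching cycles over the chords $[\beta_i,\beta_{i+1}]$, and then identify these cycles (up to cyclic relabelling) with the $t^j l_\gamma$. Your version is in fact slightly more careful than the paper's, which only asserts that the matching cycles are \emph{homologous} to the $t^j l_\gamma$; you supply the arc-isotopy argument (the V-shaped arc $[\beta_j,0]\cup[0,\beta_{j+1}]$ versus the chord $[\beta_j,\beta_{j+1}]$) needed to conclude that the Dehn twists themselves agree in $Iso^0(\overline{X}_b,\partial\overline{X}_b)$.
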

\begin{proof}
Recall that $D'$ is the set of critical values of $\tilde{f}$ restricted to $\overline{B} \cap \tilde{f}^{-1}(B')$. It is easy to see that $D'$ is given by:
\begin{equation*}
D' = \{ (w , u_1 , u_2 , \dots, u_{n-2} ) \in B' \; | \; \lambda^n + u_{n-2} \lambda^{n-2} + \dots + u_1 \lambda - w \text{ has multiple roots } \}.
\end{equation*}
Then $\pi_1( B' \setminus D' , b)$ is the $n$-th Artin braid group with generators $\sigma_1 , \sigma_2 , \dots , \sigma_{n-1}$ exchanging pairs of roots of $\lambda^n - w_0$. More precisely, let $q \in \mathbb{C}$ be an $n$-th root of $w_0$, so $q^n = w_0$. Then
\begin{equation*}
\lambda^n - q^n = (\lambda - q)(\lambda - \xi q) \dots (\lambda- \xi^{n-1}q).
\end{equation*}
We construct loops in $B' \setminus D'$ representing the generators $\sigma_1 , \dots , \sigma_{n-1}$ as follows. For $i = 1, \dots , n-1$, we can find a loop in $B' \setminus D'$ exchanging $\xi^{i-1} q , \xi^i q$ along paths joining $\xi^{i-1} q$ and $\xi^{i}q$ while keeping all other roots fixed. To be specific, let $\tau_i^+,\tau_i^- : [0,1] \to \mathbb{C}$ be the paths
\begin{equation*}
\begin{aligned}
\tau_i^+(t) &= \xi^{i-1}\left( 1 + \frac{1-\xi}{2}(e^{i\pi t} - 1) \right)q, \\
\tau_i^-(t) &= \xi^{i-1}\left( \xi + \frac{\xi-1}{2}(e^{i\pi t} - 1) \right)q.
\end{aligned}
\end{equation*}
Then we let $\sigma_i \in \pi_1(B' \setminus D' , b)$ be the loop in which $\xi^{i-1}q$ moves on $\tau_i^+$, $\xi^i q$ moves on $\tau_i^-$ and all other roots fixed. It follows that the geometric monodromy corresponding to $\sigma_i$ is given by a Dehn twist around a cycle $c_i$, which we now define. Let $\tau_i(t) = \xi^{i-1}\left( 1 + (\xi-1)t \right)q$ be the straight line path joining $\xi^{i-1}q$ to $\xi^{i}q$. The cycle $c_i$ may be taken to be the pre-image of $\tau_i$ in $\overline{X}_b$ under the branched double cover $(\lambda , z) \mapsto \lambda$. We have shown that $\rho_{geom}(\sigma_i)$ is a Dehn twist around the loop $c_i$. To conclude we note that it is easy to see that the cycles $c_1,c_2, \dots , c_{n-1}$ are homologous to $l_\gamma , tl_\gamma , t^{n-2}l_\gamma$ (possibly after a cyclic re-ordering of $l_\gamma , \dots , t^{n-1}l_\gamma$).
\end{proof}

%%%%%%%%%%%%%%%%%%%%%%%%%%%
\subsection{Vanishing cycles for spectral curves}

In this section we will construct a collection of vanishing cycles $\alpha \in H^1(S , \mathbb{Z})$ such that the corresponding transvections $T_\alpha$ generate the monodromy action of the Hitchin system. We continue to assume that we have chosen a basepoint $a_0 \in \mathcal{A}^{\rm reg}$ of the form $a_0 = (0, \dots , 0 , a_n)$, where $a_n$ has only simple zeros.\\

Let $i \neq j$ and suppose that $\gamma : [0,1] \to \Sigma$ is an embedded path in $\Sigma$ joining $b_i$ to $b_j$ for which $\gamma(t)$ is not a branch point for any $t \in (0,1)$. Let $l_\gamma , tl_\gamma , \dots , t^{n-1} l_\gamma \in H_1(S , \mathbb{Z})$ be the corresponding vanishing cycles. We let $c_\gamma , tc_\gamma , \dots , t^{n-1}c_\gamma \in H^1(S , \mathbb{Z})$ be the Poincar\'e dual cohomology classes. We note that $\pi_*( t^j c_\gamma) = 0$ for all $j$, so $t^j c_\gamma \in \Lambda_P$. In particular the transvection $T_{t^j c_\gamma} : \Lambda_S \to \Lambda_S$ preserves $\Lambda_P$. The main result of this section is the following:
\begin{theorem}\label{thmallowedmonodromy}
The transvections $T_{c_\gamma} , T_{tc_\gamma} , \dots , T_{t^{n-1}c_\gamma }$ belong to the $SL(n,\mathbb{C})$ monodromy group.
\end{theorem}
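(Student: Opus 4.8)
The plan is to realize each vanishing cycle $t^j c_\gamma$ as an honest Picard--Lefschetz vanishing cycle for a one-parameter degeneration of spectral curves, and then invoke the classical Picard--Lefschetz formula. The key geometric input is that degenerating the spectral curve along a suitable line in $\mathcal{A}$ creates an $A_{n-1}$ singularity, whose local versal deformation was analyzed in \textsection\ref{secan1}. So first I would fix the path $\gamma$ joining $b_i$ to $b_j$ and produce a one-parameter family $a_s \in \mathcal{A}$, $s \in \overline{D}_\epsilon$, with $a_0$ the chosen basepoint, such that for $s \neq 0$ the spectral curve $S_{a_s}$ is smooth, while $S_{a_0}$ has a single ordinary node (an $A_1$ singularity) which, as $\gamma$ varies, sweeps out --- no: more precisely, I would instead arrange a degeneration where two of the branch points $b_i, b_j$ come together along $\gamma$, so that over the limiting point the $n$ ramification points $u_i$ and $u_j$ collide in $S$, producing an $A_{n-1}$ singularity in the total space of the family. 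The model for this collision is exactly the map $\tilde f$ of \textsection\ref{secan1}: near the collision point the family of spectral curves is, after an analytic change of coordinates, the Milnor fibration of $\lambda^n + z^2$, pulled back along a map from the disc into the base $B'$ of the versal deformation.

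The second step is to identify the vanishing cycles of this local model with the classes $l_\gamma, t l_\gamma, \dots, t^{n-1} l_\gamma$. This is essentially Proposition~\ref{propgeommono}: the compact Milnor fibre $\overline{X}_b$ is the cyclic $n$-fold cover of a disc branched at $\pm\sqrt{w_0}$, and the straight segment joining those two points has associated vanishing cycles which generate the geometric monodromy. I would match the local picture to the global one by choosing the trivialising disc of Proposition~\ref{proptd} so that the small disc $D_i \cup D_j$ containing $b_i, b_j$ together with the segment $\gamma$ is identified with the base disc of the local $A_{n-1}$ model; then the $n$ lifts $\gamma^1, \dots, \gamma^n$ of $\gamma$ to $S$ are precisely the lifts of the segment in $\overline{X}_b$, and the local vanishing cycles $l_\gamma, \dots, t^{n-1} l_\gamma$ agree with the global ones. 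The upshot is that the geometric monodromy of the family over a small loop around the discriminant, acting on $H_1(S,\mathbb{Z})$, is the product of Dehn twists $\prod_j \tau_{t^j l_\gamma}$ (or rather, since the monodromy around a single loop enclosing the full discriminant contribution of this degeneration realizes each Dehn twist, I would want to move the branch points one pair at a time so as to isolate each $\tau_{t^j l_\gamma}$ individually).

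Third, I translate Dehn twists into transvections. A Dehn twist about a simple closed curve $l$ acts on $H_1$ (equivalently, via Poincar\'e duality, on $H^1$) by the Picard--Lefschetz formula $x \mapsto x \pm \langle c, x\rangle c$, where $c$ is the Poincar\'e dual of $[l]$; this is exactly the transvection $T_{t^j c_\gamma}$ of Equation~(\ref{equtransvection}) (the sign being absorbed because $T_a = T_{-a}$). Since $\pi_*(t^j c_\gamma) = 0$, each such transvection preserves $\Lambda_P$ and hence gives an element of $\Gamma_{SL}$ via the monodromy representation $\rho_{SL}$. To make the conclusion precise I must check that the loop in $B' \setminus D'$ used in \textsection\ref{secan1} actually pushes forward to a genuine loop in $\mathcal{A}^{\rm reg}$ based at $a_0$: this is where I embed the local versal deformation into $\mathcal{A}$. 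The deformation $\tilde f$ depends on parameters $u_1, \dots, u_{n-2}$, and I need these to be realizable by deforming the section $a_n \in H^0(\Sigma, L^n)$ through sections with appropriate local behaviour near $b_i, b_j$; this is possible because $\deg(L) > 2g-2$ (or $L = K$) guarantees enough sections of $L^n$, so the relevant evaluation/jet maps $H^0(\Sigma, L^n) \to (\text{jets at } b_i, b_j)$ are surjective.

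The main obstacle I anticipate is precisely this globalization step: showing that the abstract versal deformation of the $A_{n-1}$ singularity embeds compatibly into the Hitchin base $\mathcal{A}$, i.e. that one can genuinely realize all the required deformation directions by global sections of $L^n$ (equivalently $L^j$), while keeping the other branch points fixed and the rest of the spectral curve smooth. Once this is in hand, the local computation of Proposition~\ref{propgeommono} and the Picard--Lefschetz formula do the rest. A secondary technical point is bookkeeping with orientations and signs to confirm that the monodromy action is $T_{t^j c_\gamma}$ and not some variant, but since transvections are insensitive to the sign of the cycle this will not cause real trouble. I would also remark that one only needs $T_{c_\gamma}, \dots, T_{t^{n-2} c_\gamma}$ from the local model, as $T_{t^{n-1} c_\gamma}$ then follows from the relation $t^{n-1} c_\gamma = -(c_\gamma + \cdots + t^{n-2} c_\gamma)$ together with the braid relations among the transvections; but in fact running the degeneration with a different choice of basepoint-compatible ordering yields $T_{t^{n-1}c_\gamma}$ directly as well.
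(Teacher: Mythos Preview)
Your strategy is exactly the paper's: collide two branch points along $\gamma$ to create an $A_{n-1}$ singularity in a spectral curve, embed the local versal deformation into the Hitchin base, and read off the Dehn twists from Proposition~\ref{propgeommono}. Two points deserve sharpening.

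First, you conflate two separate globalization problems. The parameters $u_1,\dots,u_{n-2}$ of the versal deformation $\tilde f$ correspond globally to the components $a_2,\dots,a_{n-1}\in H^0(\Sigma,L^j)$ of the Hitchin base, \emph{not} to deformations of $a_n$; the paper checks versality by observing that the Kodaira--Spencer map $(a_2,\dots,a_{n-1})\mapsto(a_2(x),\dots,a_{n-1}(x))$ is surjective, which is immediate. The role of deforming $a_n$ is different: it is used beforehand to \emph{produce} the singular spectral curve, by moving $b_i$ and $b_j$ together along $\gamma$ while keeping the remaining zeros simple.

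Second, for this latter step your jet--map argument is not quite enough: surjectivity of $H^0(\Sigma,L^n)\to(\text{jets at }b_i,b_j)$ gives only local control, whereas you need a global path of sections whose divisors of zeros realize the prescribed isotopy of $\{b_i,b_j\}$ while the other $nl-2$ zeros remain distinct and avoid $b_i,b_j$ throughout. The paper handles this by working on the symmetric product: since $\widetilde{S}^{nl-2}\Sigma\to Jac_{nl-2}(\Sigma)$ is a Serre fibration for $nl-2>2g-2$, one can lift the constant map (the fixed remaining zeros) to a family $f''(w)$ of reduced divisors with $[D_w+f''(w)]=L^n$, then perturb to transversality with the bad locus and shrink the disc. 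This is the substantive technical content you would need to supply; once it is in place, the rest of your outline matches the paper's proof.
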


\begin{proof}
Let $D$ denote the unit disc in $\mathbb{C}$ and choose an embedding $e : D \to \Sigma$ such that $b_i = e( -1/2 )$, $b_j = e(1/2)$ and $\gamma(t) = e(t-1/2)$. We can also choose $D$ so that the image $e(D)$ contains no other branch points. For each $w \in D$, let $D_w$ be the degree $2$ divisor given by $D_w = e(\sqrt{w}) + e(-\sqrt{w})$. Then $D_w$ consists of two distinct points for $w \neq 0$ and $D_0 = 2 e(0)$. We also have $D_{1/4} = b_i + b_j$. Let $D^*$ be the divisor $D^* = \sum_{k \neq i,j} b_j$. Denote by $S^m \Sigma$ the $m$-th symmetric product and $\alpha : S^m \Sigma \to Jac_m(\Sigma)$ the Abel-Jacobi map, where $Jac_m(\Sigma)$ is the space of degree $m$ line bundles on $\Sigma$. We also let $\widetilde{S}^m \Sigma$ denote the subspace of $S^m \Sigma$ consisting of $m$-tuples of distinct points on $\Sigma$. For $m > 2g-2$, the restriction $\widetilde{S}^m \Sigma \to Jac_m(\Sigma)$ of the Abel-Jacobi map to $\widetilde{S}^m \Sigma$ is known to be a Serre fibration \cite{doli}. Now consider the constant map $f : D \to \widetilde{S}^{nl-2}\Sigma$ given by $f(w) = D^*$. Then $\alpha \circ f$ is the constant map $D \to Jac_{nl-2}(\Sigma)$ taking the value $\alpha(D^*) = L^n \otimes [D_{1/4}]^{-1}$. By the lifting property of Serre fibrations, $f$ is homotopic to a map $f' : D \to \widetilde{S}^{nl-2}\Sigma$ for which $\alpha(f'(w)) = L^n \otimes [D_w]^{-1}$. Here we take homotopies relative to the point $1/4 \in D$. Thus we can assume that $f'(1/4) = D^*$.\\

Consider the family of divisors $\{ D_{w} + f'(w) \}_{w \in D}$. We would like it to be true that for all $w \in D$, the divisors $D_{w}$ and $f'(w)$ have no points in common. Unfortunately this may not be the case. To avoid this, we will need to change $f'$ by a homotopy and also shrink the disc $D$ as we now explain. Define spaces $X$ and $Y$ as follows: let $X = \{ (N , w) \in \widetilde{S}^{nl-2}\Sigma \times D \; | \; [N] = L^n \otimes [D_w]^{-1} \}$. Let $Y$ be the subspace of $X$ consistsing of pairs $(N,w)$ for which some point of $N$ belongs to $D_w$. Then $X$ and $Y$ naturally fiber over $D$ giving a commutative diagram:
\begin{equation*}\xymatrix{
Y \ar[r]^-{i} \ar[dr] & X \ar[d]^-{p} \\
& D 
}
\end{equation*}
Clearly $X$ is a complex manifold and $Y$ a subvariety of codimension $1$. We also find that the projection $p : X \to D$ is a submersion. The map $f' : D \to X$ is a section of $p$. We may replace $f'$ by a homotopic map $f'' : D \to X$, which is a section of $p$ such that $f''(D)$ meets $Y$ transversally in smooth points of $Y$ and away from the fibre of $Y$ over $0 \in D$. We again choose our homotopy relative to the basepoint $1/4 \in D$, so that $f''(1/4) = D^* \notin Y$. So $Y$ meets $f''(D)$ in a discrete set of points different from $f''(1/4)$ and $f''(0)$. Choose an embedded path $p(t) : [0,1] \to D$ in $D$ from $1/4$ to $0$ with $f''( p(t) ) \notin Y$ for all $t$. Let $D'$ be a tubular neighborhood of the path $p([0,1])$ not meeting any points of $Y$. We can assume $D'$ is homeomorphic to a disc. Replacing $D$ with $D'$, we have obtained a map $f'' : D' \to X$ such that $f''(1/4) = D'$ and such that the divisor $f''(w)$ does not intersect with $D_w$ for any value of $w \in D'$.\\

Let $\sqrt{p(t)}$ denote the square root of $p(t)$ that goes from $-1/2$ to $0$. We obtain a path $\hat{p} : [0,1] \to D$ from $-1/2$ to $1/2$ by taking $\sqrt{p(t)}$ from $-1/2$ to $0$ and then taking $-\sqrt{p(t)}$ in reverse from $0$ to $1/2$. Let $\gamma' : [0,1] \to \Sigma$ be $e \circ \hat{p}$. Then $\gamma'$ is isotopic to $\gamma$ in the space of embedded paths from $b_i$ to $b_j$ avoiding all other branch points. In particular, the vanishing cycles $c_\gamma, tc_\gamma , \dots , t^{n-1}c_\gamma$ coincide with the cycles $c_{\gamma'} , tc_{\gamma'} , \dots , t^{n-1}c_{\gamma'}$.\\

Consider now the family of effective divisors $\{ D_w + f''(w) \}_{w \in D'}$ parametrised by $w \in D'$. By construction, this family has the following properties:
\begin{itemize}
\item[(i)]{For every $w$, the line bundle associate to $D_w + f''(w)$ is $L^n$.}
\item[(ii)]{When $w = 1/4$, the divisor $D_{1/4} + f''(1/4)$ is the divisor of zeros of $a_n$.}
\item[(iii)]{For $w \neq 0$, the divisor $D_w + f''(w)$ has no multiple points.}
\item[(iv)]{For $w=0$, the divisor $D_0 + f''(0)$ has one point of multiplicity $2$ and no other multiple points.}
\item[(v)]{If $w$ follows a loop based at $1/4$ going once around $0$, this has the effect of swapping $b_i, b_j$ along paths isotopic to $\gamma$ while all other branch points move on contractible loops.}
\end{itemize}
By (i), we have constructed a map $d : D' \to \mathbb{P}( H^0(\Sigma , L^n) )$. Since $D'$ is contractible this can be lifted to a map $a : D' \to H^0( \Sigma , L^n)$, where the divisor of $a(w)$ is $d(w)$. We may further choose $a$ so that $a(1/4) = a_n$. Let $a'_n = a(0) \in H^0(\Sigma , L^n)$. Then $a'_n$ has a single zero of order $2$ and no other multiple zeros. Let $x = e(0) \in \Sigma$ be the double zero of $a'_n$. Let $u \in tot(L)$ be the origin of the fibre of $L$ lying over $x$. The spectral curve $S_{a'_n}$ given by $\lambda^n + a'_n = 0$ has exactly one singularity, located at the point $u$. In a suitable local coordinate $z$ centered at $x$ we have that $a'_n(z) = z^2 (dz)^n$ and $S'_{a'_n}$ is locally given by $\lambda^n + z^2 = 0$, a plane curve singularity of type $A_{n-1}$. In what follows, we aim to show that the monodromy of this $A_{n-1}$ singularity occurs as monodromy of the Hitchin fibration and that this monodromy is generated by Picard-Lefschetz transformations in the vanishing cycles $c_\gamma , tc_\gamma , \dots , t^{n-1}c_\gamma$.\\

Let $\mathcal{A}'$ be the affine subspace of $\mathcal{A}$ consisting of points $a = (a_2 , a_3 , \dots , a_{n-1} , a'_n )$ whose $H^0(\Sigma , L^n)$-term is given by $a'_n$. For each $a \in \mathcal{A}'$, the spectral curve $S_a$ passes through the point $u \in tot(L)$ and we may consider the germ of the hypersurface $S_a \subset tot(L)$ around the point $u$. Thus $\mathcal{A}'$ gives a family of deformations of the $A_{n-1}$ singularity of $S_{a'_n}$ located at $u$. We claim this is a versal family of deformations. Under the family of deformations of the $A_{n-1}$ singularity defined by the space $\mathcal{A}'$, we have that the germ of $\lambda^n + z^2$ is deformed to $\lambda^n + a_{n-2}(z) \lambda^{n-2} + \dots + a_2(z) \lambda^2 + a_1(z) \lambda + z^2$. The Kodaira-Spencer map for this family (see \cite{loo}) is to the map $\mathcal{A}' \to \mathbb{C}^{n-2}$ sending $(a_2 , a_3 , \dots , a_{n-1} , a'_n)$ to $(a_2(x) , a_3(x) , \dots , a_{n-1}(x))$. It is clear that for any point $x \in \Sigma$, this map surjects to $\mathbb{C}^{n-2}$, which shows that $\mathcal{A}'$ provides a versal deformation of the singularity. It follows that the geometric monodromy of Section \ref{secan1} is realised as the monodromy of the Hitchin fibration around certain loops in $\mathcal{A}^{\rm reg}$, defined in the vicinity of $a'_n$. These loops act on $S$ by Dehn twists around the vanishing cycles of this $A_{n-1}$ singularity. Clearly a Dehn twist around a vanishing cycle $v \in H^1(S,\mathbb{Z})$ acts on cohomology by the corresponding Picard-Lesfschetz transformation $T_v$. To complete the proof of the theorem, it remains to identify the vanishing cycles associated to the $A_{n-1}$ singularity of $S_{a'_n}$ with specific cohomology classes in $H^1(S,\mathbb{Z})$.\\

Let $B$ be a small open ball around $u \in tot(L)$. Let $U \subset \mathcal{A}$ be an open neighbourhood of $(0,0, \dots , 0 , a'_n)$ in $\mathcal{A}$ sufficiently small so that for any $a \in U$, the spectral curve $S_a$ is smooth outside of $B$. Then after possibly further shrinking $U$, we know that the monodromy of the Hitchin fibration associated to loops in $U \setminus \mathcal{D} \cap U$ is generated by transvections of the vanishing cycles associated to the $A_{n-1}$ singularity of $S_{a'_n}$. Let $t_0 \in [0,1)$ be such that $a'_0 = a( p(t_0)) \in U$. Let $\gamma'_0$ be an embedded path in $e(D)$ joining $e(\sqrt{p(t_0)})$ to $e(-\sqrt{p(t_0)})$, in fact we may take $\gamma'_0(t)$ to be the restriction of $\gamma'$ to $[t_0/2 , 1 - t_0/2]$. Then by Proposition \ref{propgeommono}, the monodromy of the Hitchin fibration over $U \setminus \mathcal{D} \cap U$ is generated by transvections in the vanishing cycles $c_{\gamma'_0} , \dots , t^{n-1} c_{\gamma'_0} \in H^1( S_{a'_0} , \mathbb{Z})$ associated to $\gamma'_0$. Consider the path in $\mathcal{A}^{\rm reg}$ joining $a_n$ to $a'_0$ given by restricting $a( p(t) )$ to $[0,t_0]$. The Gauss-Manin connection over this path defines an isomorphism $H^1( S_{a'_0} , \mathbb{Z}) \simeq H^1( S , \mathbb{Z})$ and it is easy to see that the vanishing cycles $c_{\gamma'_0} , tc_{\gamma'_0} , \dots , t^{n-1}c_{\gamma'_0} \in H^1(S_{a'_0} , \mathbb{Z})$ associated to $\gamma'_0$ are mapped to the vanishing cycles $c_{\gamma'} , tc_{\gamma'} , \dots , t^{n-1}c_{\gamma'} \in H^1(S , \mathbb{Z})$ associated to $\gamma'$. This proves the theorem.
\end{proof}

\begin{remark}
In the process of proving Theorem \ref{thmallowedmonodromy} we constructed a family $a : D' \to H^0(\Sigma , L^n)$ of sections of $L^n$ parametrised by a space $D'$ homeomorphic to a disc. Moreover, $D'$ contained exactly one point $0 \in D'$ for which the corresponding spectral curve was singular. If we take a loop in $D'$ that winds once anti-clockwise around $0$, the corresponding monodromy transformation is seen to be $T_{c_\gamma} T_{tc_\gamma} T_{t^2 c_\gamma} \dots T_{t^{n-2}c_\gamma}$.
\end{remark}

Next, we will apply Theorem \ref{thmallowedmonodromy} to construct some specific examples of vanishing cycles. In fact we will later see that the vanishing cycles constructed below already are sufficient to generate the monodromy group. Recall from Section \ref{secdecomps} that on choosing a trivialising disc $i : \overline{D} \to \Sigma$, we obtain a decomposition $\Lambda_S = \Lambda_{S,0} \oplus \Lambda_{S,1}$, where $\Lambda_{S,0}$ is spanned by the cycles $\{ t^j c_i \}_{i,j}$ and $\Lambda_{S,1}$ can be identified with $\mathbb{Z}[t]/\langle t^n-1 \rangle \otimes_{\mathbb{Z}} \Lambda_\Sigma$. We note that once a choice of trivialising disc is given, the identification 
\begin{equation}\label{identification}
\Lambda_{S,1} \simeq \mathbb{Z}[t]/\langle t^n-1 \rangle \otimes_{\mathbb{Z}} \Lambda_\Sigma
\end{equation}
is determined only up to composition with a power of $t$.

\begin{proposition}\label{propallowedcycles}
For $i = 1,2, \dots , k-1$, there exists a path from $b_i$ to $b_{i+1}$ whose associated vanishing cycles are $\{ c_i , tc_i , \dots , t^{n-1}c_i\}$. Let $a_1, b_1 , \dots , a_g , b_g$ be a symplectic basis for $\Lambda_\Sigma$. For $u = 1,2, \dots , g$, there exist paths $\gamma_{a_u}$ and $\gamma_{b_u}$ from $b_1$ to $b_2$ such that under a suitable choice of identification (\ref{identification}), the associated vanishing cycles are $\{ c_1 + (1-t)a_u , t(c_1 + (1-t)a_u) , \dots , t^{n-1}(c_1 + (1-t)a_u) \}$ and $\{ c_1 + (1-t)b_u , t(c_1 + (1-t)b_u) , \dots , t^{n-1}(c_1 + (1-t)b_u) \}$.
\end{proposition}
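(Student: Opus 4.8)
The plan is to prove the two assertions separately: the first is essentially definitional, and the second is obtained by a band sum (``finger move'') on $\gamma_1$. For the first assertion I would simply take $\gamma_i$ to be the segment $[b_i,b_{i+1}]$ used in Section~\ref{secztmod} to define the classes $c_i$. After the homeomorphism placing the branch points on the real axis of the trivialising disc, $\gamma_i$ is an embedded path from $b_i$ to $b_{i+1}$ avoiding the other branch points, its lifts to $S$ are the paths $\gamma_i^1,\dots,\gamma_i^n$ with $\gamma_i^j=t^{j-1}\gamma_i^1$, and $c_i$ was by definition the class of $\gamma_i^1-\gamma_i^2$; hence the vanishing cycles associated to $\gamma_i$ in the sense of Section~\ref{secvc} are exactly $\{c_i,tc_i,\dots,t^{n-1}c_i\}$, so nothing is to be done beyond matching conventions.

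For the second assertion, the idea is to modify $\gamma_1$ by band-summing with a simple closed curve lying outside the trivialising disc. Fixing $u$, I would choose a point $p$ in the interior of $\gamma_1$, an embedded arc $s$ from $p$ to a point $q\in\Sigma\setminus\overline{i(D')}$ meeting $\gamma_1$ only at $p$ and avoiding all branch points, and an embedded loop $\ell_0$ based at $q$, contained in $U_1=\Sigma\setminus\overline{i(D')}$, meeting $s$ only at $q$, and representing $a_u$ under the isomorphism $H_1(U_1,\mathbb Z)\cong H_1(\Sigma,\mathbb Z)$; such data exist because $g>1$, so every primitive homology class is carried by a simple closed curve, which can be isotoped off the disc $i(D')$. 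Then I set $\gamma_{a_u}:=\gamma'$, the embedded path obtained from $\gamma_1$ by inserting, near $p$, a thin finger that runs out along a push-off of $s$, once around $\ell_0$, and back along a parallel push-off of $s$; intuitively $\gamma'$ is a thickened copy of the non-embedded concatenation $\gamma_1|_{[0,1/2]}*(s*\ell_0*\bar s)*\gamma_1|_{[1/2,1]}$. The curve $\gamma_{b_u}$ is built identically from a simple closed curve representing $b_u$.

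The content is the computation of the vanishing cycles of $\gamma'$, which I would do directly using the trivialisation of $S$ over $U_1$. Since the chosen disc is trivialising, $S$ restricted to $U_1$ is the trivial $n$-sheeted cover, so $\ell_0$ has trivial monodromy and lifts to $n$ disjoint loops $\tilde\ell_0^{\,1},\dots,\tilde\ell_0^{\,n}$ in $V_1=\pi^{-1}(U_1)$ with $t\tilde\ell_0^{\,j}=\tilde\ell_0^{\,j+1}$. Keeping $\gamma'$ equal to $\gamma_1$ near $b_1$ so that the two families of lifts are labelled compatibly, the lift $(\gamma')^j$ follows $\gamma_1^j$, runs out along a lift of $s$, once around $\tilde\ell_0^{\,j}$, back (the out-and-back cancelling in homology), and then along $\gamma_1^j$ to $u_2$; hence $[(\gamma')^j]=[\gamma_1^j]+[\tilde\ell_0^{\,j}]$ in $H_1(S,\mathbb Z)$, and therefore
\begin{equation*}
l_{\gamma'}=[(\gamma')^1-(\gamma')^2]=c_1+(1-t)[\tilde\ell_0^{\,1}].
\end{equation*}
Now $[\tilde\ell_0^{\,1}]$ lies in $H_1(V_1,\mathbb Z)=\Lambda_{S,1}$ and equals $t^{\,j_0-1}\otimes a_u$ under (\ref{identification}) for some $j_0$ fixed by which sheet near $b_1$ was taken first. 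Since the ambiguity in (\ref{identification}) is exactly a power of $t$ acting on the summand $\Lambda_{S,1}$, while $c_1$ lies in the complementary summand $\Lambda_{S,0}$, I absorb $t^{\,j_0-1}$ by composing (\ref{identification}) with $t^{-(j_0-1)}$; this leaves $c_1$ untouched and yields $l_{\gamma'}=c_1+(1-t)a_u$, so the vanishing cycles of $\gamma_{a_u}$ are $\{t^m(c_1+(1-t)a_u)\}_{0\le m\le n-1}$, and likewise for $\gamma_{b_u}$.

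The step I expect to be the main obstacle is the sheet-bookkeeping in this lift computation: one must check that the out-and-back portion of the finger contributes nothing to homology, that $\ell_0$ genuinely has trivial monodromy — this is precisely where the trivialising disc is indispensable, since a loop reaching outside an arbitrary disc could pick up a nonzero power of $t$ and destroy the formula — and that $[\tilde\ell_0^{\,1}]$ has no component in $\Lambda_{S,0}$, so that the residual power-of-$t$ ambiguity can be absorbed into (\ref{identification}) without disturbing the $c_1$-term. The surface topology needed to produce the embedded finger-path from the above data is standard, and I would keep that part brief.
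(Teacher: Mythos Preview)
Your proposal is correct and follows essentially the same approach as the paper's own proof: both take $\gamma_i$ for the first assertion and, for the second, produce a path from $b_1$ to $b_2$ that detours out of the trivialising disc, traverses a simple loop representing the given homology class, and returns, then read off the vanishing cycle as $c_1+(1-t)t^{w}a_u$ and absorb $t^{w}$ into the ambiguity of (\ref{identification}). Your finger-move description and lift computation are in fact more explicit than the paper's sketch.

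One small point worth making explicit: the proposition requires a \emph{single} choice of identification that works simultaneously for all $u$ and for both $a_u$ and $b_u$. In your argument this amounts to showing that the integer $j_0$ is independent of $u$; this follows provided you fix the arc $s$ (and hence the basepoint $q$ and the exit point from the disc) once and for all, since $j_0$ records only which sheet of $V_1$ the lift $\gamma_1^1$ connects to along $s$, and that has nothing to do with the particular loop $\ell_0$. The paper handles this with the phrase ``for the same value of $w$'' without further comment; you essentially say the same thing with ``$j_0$ fixed by which sheet near $b_1$ was taken first'', but it would be worth one more sentence to make the uniformity in $u$ explicit.
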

\begin{proof}
Let $\gamma_i$ be the path from $b_i$ to $b_{i+1}$ constructed in Section \ref{secztmod} ($\gamma_i$ is depicted in Figure \ref{figbranch}). Then by the definition of $c_i , tc_i , \dots , t^{n-1}c_i$, it is clear that these are the vanishing cycles associated to $\gamma_i$. Consider a path $\gamma_{a_1}$ in $\Sigma$ starting at $b_1$ and moving left to the boundary of the trivialising disc, going around the loop $a_1$, then returning back to the trivialising disc and terminating at $b_2$. The corresponding vanishing cycles will clearly have the form $t^j( c_1 + (1-t)t^w a_1)$, for some value of $w$. Similarly define paths $\gamma_{b_1} , \dots , \gamma_{a_g} , \gamma_{b_g}$. The corresponding vanishing cycles will have the form $t^j( c_1 + (1-t)t^w a_u)$ or $t^j( c_1 + (1-t)t^w b_u)$ for the same value of $w$. Choosing a different identification in (\ref{identification}) if necessary, we may assume $w=0$.
\end{proof}

%%%%%%%%%%%%%%%%%%%%%%%%%%%%%%%%%%%%%%%%%%%%
%%%%%%%%%%%%%%%%%%%%%%%%%%%%%%%%%%%%%%%%%%%%
% SECTION
%%%%%%%%%%%%%%%%%%%%%%%%%%%%%%%%%%%%%%%%%%%%
%%%%%%%%%%%%%%%%%%%%%%%%%%%%%%%%%%%%%%%%%%%%
\section{The vanishing lattice}\label{secthevanlat}

%%%%%%%%%%%%%%%%%%%%%%%%%%%%%%%%%
%%%%%%%%%%%%%%%%%%%%%%%%%%%%%%%%%
\subsection{Construction of vanishing lattice on $\Lambda_P$}\label{secvanlat}

In this section we give $\Lambda_P$ the structure of a vanishing lattice $(\Lambda_P , \langle \; , \; \rangle , \Delta_P)$, made out of vanishing cycles constructed in Section \ref{secvc}. We then recall the classification of vanishing lattices in \cite{jan1,jan2} and use these results to classify the vanishing lattice on $\Lambda_P$. In Section \ref{secmain} we will show that the group generated by transvections in $\Delta_P$ is precisely the monodromy group $\Gamma_{SL}$ of the Hitchin fibration.\\

Vanishing lattices were classified for $R = \mathbb{Z}_2$ in \cite{jan1} and $R = \mathbb{Z}$ in \cite{jan2}. If $(V , \langle \; , \; \rangle , \Delta)$ is a vanishing lattice over $\mathbb{Z}$ then we obtain a vanishing lattice $(\overline{V} , \langle \; , \; \rangle , \overline{\Delta})$ over $\mathbb{Z}_2$, where $\overline{V} = V \otimes_{\mathbb{Z}} \mathbb{Z}_2$ and $\overline{\Delta}$ is the image of $\Delta$ under mod $2$ reduction.

\begin{proposition}\label{propsgen}
Let $V$ be a free $R$-module of finite rank with alternating bilinear form $\langle \; , \; \rangle$. Let $S \subseteq V$ be a subset of $V$ satisfying the following conditions:
\begin{enumerate}
\item[(i)]{$S$ spans $V$.}
\item[(ii)]{For any two distinct elements $u,v \in S$, there exists a sequence $u = s_1 , s_2 , \dots , s_m = v$ of elements of $S$ such that $\langle s_i , s_{i+1} \rangle = \pm 1$ for, $1 \le i \le m-1$.}
\end{enumerate}
Let $\Gamma_S$ be the subgroup of $Sp^{\#}V$ generated by $\{ T_s \}_{s \in S}$ and set $\Delta = \Gamma_S \cdot S$. Then $(V , \langle \; , \; \rangle , \Delta)$ is a vanishing lattice and $\Gamma_{\Delta} = \Gamma_S$.
\end{proposition}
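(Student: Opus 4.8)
The plan is to check, straight from the definitions, the three requirements in the notion of a vanishing lattice for $(V,\langle\;,\;\rangle,\Delta)$ — namely that $\Delta$ spans $V$, that $\Delta$ is a single $\Gamma_\Delta$-orbit, and (when $\mu>1$) that some pair of elements of $\Delta$ pairs to $1$ — together with the equality $\Gamma_\Delta=\Gamma_S$. The one computational fact I expect to use throughout is the conjugation identity $g\,T_a\,g^{-1}=T_{ga}$, valid for any $a\in V$ and any $g$ preserving $\langle\;,\;\rangle$, which is immediate from (\ref{equtransvection}). Using this, and the fact (recorded in the text) that each $T_s$ lies in $Sp^{\#}V$, I would first dispose of $\Gamma_\Delta=\Gamma_S$: conjugation invariance shows $\Delta=\Gamma_S\cdot S$ is stable under $\Gamma_S$ and that $T_\delta=g\,T_s\,g^{-1}\in\Gamma_S$ whenever $\delta=g\cdot s$ with $g\in\Gamma_S$, giving $\Gamma_\Delta\subseteq\Gamma_S$; the reverse inclusion is just $S\subseteq\Delta$. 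Write $\Gamma$ for this common group.

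The spanning requirement is then immediate, since $S\subseteq\Delta$ and $S$ already spans $V$ by hypothesis (i). For the third requirement, assume $\mu>1$; then $V$ is not generated by a single element, so $|S|\ge2$, and hypothesis (ii) applied to two distinct elements of $S$ yields $u,v\in S$ with $\langle u,v\rangle=\pm1$. Since an alternating form is skew, I can pass from $(u,v)$ to $(v,u)$ if necessary to get the value $+1$, so that some pair in $S\subseteq\Delta$ pairs to $1$.

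The substantive point is that $\Delta$ is a single $\Gamma$-orbit. As $\Delta=\bigcup_{s\in S}\Gamma\cdot s$ and distinct $\Gamma$-orbits are disjoint, this comes down to showing that all elements of $S$ lie in one $\Gamma$-orbit. The key identity is: if $\langle a,b\rangle=1$, then $T_b\,T_a(b)=a$ — indeed $T_a(b)=b+a$, and since $\langle b,b+a\rangle=\langle b,a\rangle=-1$ one gets $T_b(b+a)=a$. Given arbitrary $u,v\in S$, I would take the chain $u=s_1,s_2,\dots,s_m=v$ in $S$ furnished by hypothesis (ii), with $\langle s_i,s_{i+1}\rangle=\pm1$ for each $i$; applying the key identity to the pair $(s_i,s_{i+1})$ when the sign is $+1$, and to $(s_{i+1},s_i)$ when it is $-1$ — in either case the two transvections involved belong to $\Gamma$ because $s_i,s_{i+1}\in S$ — shows that $s_i$ and $s_{i+1}$ lie in the same $\Gamma$-orbit, and transitivity along the chain gives the same for $u$ and $v$.

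I do not anticipate a real obstacle: once the conjugation and transvection identities are set up, the proposition is essentially formal. The only thing needing a little care — and the reason for writing $T_b\,T_a$ rather than $T_a\,T_b$ in the key identity — is to land on $a$ itself rather than $-a$, which would be useless over $\mathbb{Z}$. The degenerate cases $\mu\le1$ (where hypothesis (ii) forces $S$ to consist of a single nonzero vector, or $V=0$) are trivial and I would handle them in one line.
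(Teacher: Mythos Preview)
Your proposal is correct and follows essentially the same approach as the paper's proof: both verify the three vanishing-lattice axioms directly and establish $\Gamma_\Delta=\Gamma_S$ via the conjugation identity $gT_sg^{-1}=T_{gs}$, with the core computation being the identity $T_vT_u(v)=u$ when $\langle u,v\rangle=1$. The only cosmetic differences are that the paper proves the single-orbit property first and then deduces $\Gamma_\Delta=\Gamma_S$, and handles the $\langle u,v\rangle=-1$ case via $u=T_v^{-1}T_u^{-1}(v)$ rather than by swapping the pair as you do.
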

\begin{proof}
Clearly $\Delta$ spans $V$, as $S$ spans $V$. If $V$ has rank $\mu > 1$ then by (i), $S$ has more than one element. Hence by (ii) there exists $\delta_1,\delta_2 \in S \subseteq \Delta$ with $\langle \delta_1 , \delta_2 \rangle = 1$. Next, we claim that for any two elements $u,v \in S$, we have $u = g(v)$ for some $g \in \Gamma_S$. By (ii) it suffices to show this in the case that $\langle u , v \rangle = \pm 1$. The claim follows, as $u = T_v T_u(v)$, if $\langle u , v \rangle = 1$ and $u = T_v^{-1} T_u^{-1}(v)$, if $\langle u , v \rangle = -1$. Our claim shows that $\Delta$ is a $\Gamma_S$-orbit, hence also a $\Gamma_{\Delta}$-orbit. Thus $(V,\langle \; , \; \rangle , \Delta)$ is a vanishing lattice. Moreover, since $\Delta$ is a $\Gamma_S$-orbit, we have that for any $\alpha \in \Delta$ there exists $u \in S$ and $g \in \Gamma_S$ with $\alpha = g(u)$. Then $T_\alpha = g \circ T_u \circ g^{-1} \in \Gamma_S$, hence $\Gamma_\Delta = \Gamma_S$.
\end{proof}

Consider $\Lambda_P$ equipped with the intersection form $\langle \; , \; \rangle$. We will give $(\Lambda_P , \langle \; , \; \rangle)$ the structure of a vanishing lattice. For this let $\{ a_u , b_u \}_{u=1}^g$ be a symplectic basis for $\Lambda_\Sigma$. Let $S_P \subset \Lambda_P$ be the subset:
\begin{equation*}
S_P = \{ t^j c_i \}_{\substack{ 0 \le j \le n-2 \\ 1 \le i \le nl-1 }} \cup \{ t^j( c_1 + (1-t)a_u) \}_{\substack{ 0 \le j \le n-2 \\ 1 \le u \le g }} \cup \{ t^j( c_1 + (1-t)b_u) \}_{\substack{ 0 \le j \le n-2 \\ 1 \le u \le g }}
\end{equation*}
By Proposition \ref{propallowedcycles}, we see that elements of $S_P$ are vanishing cycles associated to certain paths in $\Sigma$. By Theorem \ref{thmallowedmonodromy}, it follows that the transvections $\{ T_v \}_{v \in S_P}$ belong to the $SL(n,\mathbb{C})$-monodromy group $\Gamma_{SL}$. Clearly $S_P$ satisfies the conditions of Proposition \ref{propsgen}, so we obtain a vanishing lattice $(\Lambda_P , \langle \; , \; \rangle , \Delta_P )$, where $\Gamma_{\Delta_P}$ is the group generated by transvections in $S_P$ and $\Delta_P = \Gamma_{\Delta_P} \cdot S_P$. So $\Gamma_{\Delta_P}$ is a subgroup of $\Gamma_{SL}$. We will eventually show that $\Gamma_{\Delta_P} = \Gamma_{SL}$.

\begin{remark}
Let $\mathcal{VC} \subset \Lambda_P$ be the set of all vanishing cycles associated to paths $\gamma$ joining pairs of branch points in $\Sigma$ and let $\Gamma_{\mathcal{VC}}$ be the group generated by transvections in $\mathcal{VC}$. We will eventually be able to show that $\Delta_P = \Gamma_{\mathcal{VC}} \cdot \mathcal{VC}$ and $\Gamma_{\mathcal{VC}} = \Gamma_{\Delta_P}$, so that $(\Lambda_P , \langle \; , \; \rangle , \Delta_P)$ is the same as the vanishing lattice described in the introduction.
\end{remark}

%%%%%%%%%%%%%%%%%%%%%%%%%%%%%%%%
%%%%%%%%%%%%%%%%%%%%%%%%%%%%%%%%
\subsection{Classification over $\mathbb{Z}_2$}\label{secz2class}

We recall the classification of vanishing lattices in \cite{jan1,jan2}. Since the classification over $\mathbb{Z}$ depends on the classification over $\mathbb{Z}_2$, we begin with the $\mathbb{Z}_2$ case. In this section $\overline{V}$ will be a finite dimensional $\mathbb{Z}_2$ vector space of dimension $\mu$, equipped with an alternating bilinear form $\langle \; , \; \rangle$. Let $\overline{V}_0$ be the null space of $\langle \; , \; \rangle$ and let $p$ be the dimension of $\overline{V}_0$. Then $\mu = 2r+p$ for some integer $r$.\\

If $q$ is a quadratic function on $\overline{V}$, we see that the transvection $T_v$ associated to $v \in \overline{V}$ preserves $q$ if and only if $q(v) = 1$. We observe that to any basis $B$ of $\overline{V}$, we can associate a unique quadratic function $q_B$ with the property that $q_B(v) = 1$ for all $v \in B$. Clearly the group generated by transvections by elements of $B$ preserves $q_B$.

\begin{definition}
Let $(\overline{V} , \langle \; , \; \rangle , \overline{\Delta} )$ be a vanishing lattice. A basis $B$ of $\overline{V}$ is called {\em weakly distinguished} if $\Gamma_{\overline{\Delta}}$ is generated by $\{ T_v \}_{v \in B}$. In this case, $\Gamma_{\overline{\Delta}}$ preserves $q_B$, hence $q_B(\alpha)=1$ for all $\alpha \in \overline{\Delta}$.
\end{definition}

Note that a vanishing lattice does not necessarily admit a weakly distinguished basis.\\

To any basis $B$ of $\overline{V}$, we construct a graph $Gr(B)$ as follows. The vertices of $Gr(B)$ are elements of $B$ and for every distinct pair of element $u,v \in B$, there is a single edge joining $u$ and $v$ if and only if $\langle u , v \rangle = 1$. 
\begin{remark}\label{remwdb}
If $(\overline{V} , \langle \; , \; \rangle , \overline{\Delta} )$ is a vanishing lattice and $B$ a weakly distinguished basis consisting of elements of $\overline{\Delta}$, then $(\overline{V} , \langle \; , \; \rangle , \overline{\Delta})$ can be completely recovered from $Gr(B)$. Indeed, $\Gamma_{\overline{\Delta}}$ is the group generated by $\{ T_v \}_{v \in B}$ and $\overline{\Delta} = \Gamma_{\overline{\Delta}} \cdot B$. Note however that different graphs can give rise to the same underlying vanishing lattice. 
\end{remark}

We now state the classification in \cite{jan1} of vanishing lattices over $\mathbb{Z}_2$. If $(\overline{V}, \langle \; , \; \rangle , \overline{\Delta})$ is a vanishing lattice which admits a weakly distinguished basis $B$ whose elements belong to $\overline{\Delta}$, then by Remark \ref{remwdb} it is enough to simply give the graph $Gr(B)$. The remaining cases will be described individually. Vanishing lattices over $\mathbb{Z}_2$ can be broadly classified into three main types: {\em symplectic, orthogonal} and {\em special}. In both the orthogonal and special cases, the are three sub-cases to consider.\\

{\bf Case 1: Symplectic.} In this case $\overline{\Delta} = \overline{V} \setminus \overline{V}_0$ and $\Gamma_{\overline{\Delta}} = Sp^{\#}V$. Symplectic vanishing lattices do not admit weakly distinguished bases and the group $Sp^{\#}V$ does not preserve any quadratic functions. For fixed values of $(r,p)$ there is only one symplectic vanishing lattice, which is denoted by $Sp^{\#}(2r,p)$.\\

{\bf Case 2: Orthogonal.} In this case, a weakly distinguished basis $B$ exists, $\overline{\Delta} = \{ v \in \overline{V} \setminus \overline{V}_0 \; | \; q_B(v) = 1 \}$ and $\Gamma_{\overline{\Delta}} = O^{\#}(q_B)$ is the subgroup of $Sp^{\#}V$ preserving $q_B$. There are three sub-cases which are distinguished according to whether the Arf invariant of $q_B$ is $0,1$ or undefined. In all cases, a weakly distinguished basis can be chosen so that $Gr(B)$ is one of the following:

\begin{center}
\begin{tikzpicture}
\draw [thick] (0,0) -- (3.4,0) ;
\draw [thick] (4.6,0) -- (7,0) ;
\draw [thick] (2,0) -- (2,-1) ;
\draw [dotted, thick] (3.6,0) -- (4.4,0);
\draw [fill] (0,0) circle(0.1);
\draw [fill] (1,0) circle(0.1);
\draw [fill] (2,0) circle(0.1);
\draw [fill] (3,0) circle(0.1);
\draw [fill] (2,-1) circle(0.1);
\draw [fill] (5,0) circle(0.1);
\draw [fill] (6,0) circle(0.1);
\draw [fill] (7,0) circle(0.1);
\draw [fill] (7,-0.5) circle(0.1);
\draw [fill] (7,-1.5) circle(0.1);
\node at (0,0.4) {$2$};
\node at (1,0.4) {$3$};
\node at (2,0.4) {$4$};
\node at (3,0.4) {$5$};
\node at (2.4,-1) {$1$};
\node at (7.4,0) {$2r$};
\node at (7.7,-0.5) {$2r+1$};
\node at (7.7,-1.5) {$2r+p$};
\draw [thick] (6,0) -- (7,-0.5) ;
\draw [dotted, thick] (7,-0.7) -- (7,-1.3) ;
\draw [thick] (6,0) -- (7,-1.5) ;

\node at (0 , -2) {for which ${\rm Arf}(q_B) = \begin{cases} 1 \; \; \text{ if } r = 2,3 \;( \text{mod }4), \\ 0 \;\; \text{ if } r = 0,1 \;( \text{mod }4). \end{cases}$} ;

\end{tikzpicture}
\end{center}

\begin{center}
\begin{tikzpicture}
\draw [thick] (-1,0) -- (3.4,0) ;
\draw [thick] (4.6,0) -- (7,0) ;
\draw [thick] (2,0) -- (2,-2) ;
\draw [dotted, thick] (3.6,0) -- (4.4,0);
\draw [fill] (2,-2) circle(0.1);
\draw [fill] (-1,0) circle(0.1);
\draw [fill] (0,0) circle(0.1);
\draw [fill] (1,0) circle(0.1);
\draw [fill] (2,0) circle(0.1);
\draw [fill] (3,0) circle(0.1);
\draw [fill] (2,-1) circle(0.1);
\draw [fill] (5,0) circle(0.1);
\draw [fill] (6,0) circle(0.1);
\draw [fill] (7,0) circle(0.1);
\draw [fill] (7,-0.5) circle(0.1);
\draw [fill] (7,-1.5) circle(0.1);
\node at (-1,0.4) {$3$};
\node at (0,0.4) {$4$};
\node at (1,0.4) {$5$};
\node at (2,0.4) {$6$};
\node at (3,0.4) {$7$};
\node at (2.4,-2) {$1$};
\node at (2.4,-1) {$2$};
\node at (7.4,0) {$2r$};
\node at (7.7,-0.5) {$2r+1$};
\node at (7.7,-1.5) {$2r+p$};
\draw [thick] (6,0) -- (7,-0.5) ;
\draw [dotted, thick] (7,-0.7) -- (7,-1.3) ;
\draw [thick] (6,0) -- (7,-1.5) ;

\node at (0,-3) {for which ${\rm Arf}(q_B) = \begin{cases} 1 \; \; \text{ if } r = 0,1 \;( \text{mod }4), \\ 0 \;\; \text{ if } r = 2,3 \;( \text{mod }4). \end{cases}$};

\end{tikzpicture}
\end{center}

\begin{center}
\begin{tikzpicture}
\draw [thick] (-1,0) -- (3.4,0) ;
\draw [thick] (4.6,0) -- (7,0) ;
\draw [thick] (2,0) -- (2,-1) ;
\draw [dotted, thick] (3.6,0) -- (4.4,0);
\draw [fill] (-1,0) circle(0.1);
\draw [fill] (0,0) circle(0.1);
\draw [fill] (1,0) circle(0.1);
\draw [fill] (2,0) circle(0.1);
\draw [fill] (3,0) circle(0.1);
\draw [fill] (2,-1) circle(0.1);
\draw [fill] (5,0) circle(0.1);
\draw [fill] (6,0) circle(0.1);
\draw [fill] (7,0) circle(0.1);
\draw [fill] (7,-0.5) circle(0.1);
\draw [fill] (7,-1.5) circle(0.1);
\node at (-1,0.4) {$2$};
\node at (0,0.4) {$3$};
\node at (1,0.4) {$4$};
\node at (2,0.4) {$5$};
\node at (3,0.4) {$6$};
\node at (2.4,-1) {$1$};
\node at (7.7,0) {$2r+1$};
\node at (7.7,-0.5) {$2r+2$};
\node at (7.7,-1.5) {$2r+p$};
\draw [thick] (6,0) -- (7,-0.5) ;
\draw [dotted, thick] (7,-0.7) -- (7,-1.3) ;
\draw [thick] (6,0) -- (7,-1.5) ;

\node at (0,-2) {for which ${\rm Arf}(q_B)$ is undefined.};

\end{tikzpicture}
\end{center}

For fixed $(r,p)$ there are at most three orthogonal vanishing lattices, according to whether the Arf invariant is $0$, $1$ or undefined. We denote these by $O^{\#}_0(2r,p), O^{\#}_1(2r,p)$ and $O^{\#}(2r,p)$. Note that $O^{\#}(2r,p)$ only exists for $p > 0$.\\

{\bf Case 3: Special.} There are three sub-cases, of which two admit weakly distinguished bases. We describe these cases first. In the first subcase $B$ can be chosen so that $Gr(B)$ is:

\begin{center}
\begin{tikzpicture}
\draw [thick] (2,0) -- (3.4,0) ;
\draw [thick] (4.6,0) -- (7,0) ;
\draw [dotted, thick] (3.6,0) -- (4.4,0);
\draw [fill] (2,0) circle(0.1);
\draw [fill] (3,0) circle(0.1);
\draw [fill] (5,0) circle(0.1);
\draw [fill] (6,0) circle(0.1);
\draw [fill] (7,0) circle(0.1);
\draw [fill] (7,-0.5) circle(0.1);
\draw [fill] (7,-1.5) circle(0.1);
\node at (2,0.4) {$1$};
\node at (3,0.4) {$2$};
\node at (6,0.4) {$2r-1$};
\node at (7.4,0) {$2r$};
\node at (7.7,-0.5) {$2r+1$};
\node at (7.7,-1.5) {$2r+p$};
\draw [thick] (6,0) -- (7,-0.5) ;
\draw [dotted, thick] (7,-0.7) -- (7,-1.3) ;
\draw [thick] (6,0) -- (7,-1.5) ;
\end{tikzpicture}
\end{center}

We have ${\rm Arf}(q_B) = \begin{cases} 1 \; \; \text{ if } r = 1,2 \;( \text{mod }4), \\ 0 \;\; \text{ if } r = 0,3 \;( \text{mod }4). \end{cases}$ This vanishing lattice is denoted $A^{ev}(2r,p)$.\\

In the second subcase, $Gr(B)$ is:

\begin{center}
\begin{tikzpicture}
\draw [thick] (2,0) -- (3.4,0) ;
\draw [thick] (4.6,0) -- (7,0) ;
\draw [dotted, thick] (3.6,0) -- (4.4,0);
\draw [fill] (2,0) circle(0.1);
\draw [fill] (3,0) circle(0.1);
\draw [fill] (5,0) circle(0.1);
\draw [fill] (6,0) circle(0.1);
\draw [fill] (7,0) circle(0.1);
\draw [fill] (7,-0.5) circle(0.1);
\draw [fill] (7,-1.5) circle(0.1);
\node at (2,0.4) {$1$};
\node at (3,0.4) {$2$};
\node at (6,0.4) {$2r$};
\node at (7.7,0) {$2r+1$};
\node at (7.7,-0.5) {$2r+2$};
\node at (7.7,-1.5) {$2r+p$};
\draw [thick] (6,0) -- (7,-0.5) ;
\draw [dotted, thick] (7,-0.7) -- (7,-1.3) ;
\draw [thick] (6,0) -- (7,-1.5) ;
\end{tikzpicture}
\end{center}

We have ${\rm Arf}(q_B) = \begin{cases} 1 \; \; \text{ if } r = 1 \;( \text{mod }4), \\ 0 \;\; \text{ if } r = 3 \;( \text{mod }4), \\ \text{undefined} \; \; \text{ if } r = 2,4 \;( \text{mod }4). \end{cases}$ This vanishing lattice is denoted $A^{odd}(2r,p)$.\\

The third subcase is obtained by taking the vanishing lattice $A^{odd}(2r,p+1)$ and taking the quotient by the subspace spanned by $e_1 + e_3 + e_5 + \dots + e_{2r+1}$, where $e_i$ denotes the basis vector corresponding to the $i$-th vertex. The resulting vanishing lattice is denoted by $A'(2r,p)$. Note that the quadratic form of the $A^{odd}(2r,p+1)$ vanishing lattice descends to the quotient if and only if $r$ is odd.\\

The following is a very useful criterion for determining whether or not a vanishing lattice is special:
\begin{proposition}[\cite{jan1}, Proposition 4.13]\label{propnotspecial}
Let $(\overline{V} , \langle \; , \; \rangle , \overline{\Delta})$ be a vanishing lattice over $\mathbb{Z}_2$. The following are equivalent:
\begin{itemize}
\item[(1)]{$(\overline{V} , \langle \; , \; \rangle , \overline{\Delta})$ is not of special type.}
\item[(2)]{There exists $\overline{V}_1 \subseteq \overline{V}$, $\overline{\Delta}_1 \subseteq \overline{\Delta}$ such that $(\overline{V}_1 , \langle \; , \; \rangle , \overline{\Delta}_1)$ is a vanishing lattice of type $O^{\#}_1(6,0)$.}
\end{itemize}
\end{proposition}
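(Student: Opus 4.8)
The statement is an equivalence, and the plan is to prove the two implications separately: first ``not special $\Rightarrow$ contains $O^{\#}_1(6,0)$'' by producing the sublattice by hand, then ``special $\Rightarrow$ contains no $O^{\#}_1(6,0)$'' by contradiction. For the first implication I would start from the classification recalled above: a vanishing lattice over $\mathbb{Z}_2$ that is not of special type is one of $Sp^{\#}(2r,p)$, $O^{\#}_0(2r,p)$, $O^{\#}_1(2r,p)$ or $O^{\#}(2r,p)$. The one numerical fact that makes everything go is that the $\mathbb{Z}_2$-Gram matrix of the $E_6$ diagram is non-degenerate (its determinant reduces to $1$ mod $2$); since a non-degenerate alternating form over $\mathbb{Z}_2$ of rank $6$ is unique up to isomorphism, $E_6$ is realised as $Gr(S)$ for a suitable six-element set $S=\{s_1,\dots,s_6\}\subseteq\overline{V}$ spanning a non-degenerate subspace, provided the rank of $\langle\,,\,\rangle$ is at least $6$.

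The content of the forward implication is then to arrange $S\subseteq\overline{\Delta}$. In the symplectic case this is automatic, since $\overline{\Delta}=\overline{V}\setminus\overline{V}_0$. In the orthogonal cases I would start from a weakly distinguished basis $B$ (so $q_B\equiv 1$ on $B$, hence $B\subseteq\overline{\Delta}$), locate an $E_6$ subgraph of $Gr(B)$ — each of the standard graphs pictured above for $O^{\#}_0,O^{\#}_1,O^{\#}$ contains one as soon as the diagram has a trivalent vertex with arms of lengths $\geq 2,2,1$, which holds in all the relevant parameter ranges — and take $S$ to be the corresponding six vertices, replacing a few of them by reflections $T_b(b')\in\overline{\Delta}$ if the graph needs correcting. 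With $S$ fixed, Proposition \ref{propsgen} applies to $S$ inside its span $W$ ($S$ spans $W$, and $E_6$ is connected so condition (ii) holds), yielding a vanishing lattice $(W,\langle\,,\,\rangle,\Gamma_S\cdot S)$; since $S\subseteq\overline{\Delta}$ and $\overline{\Delta}$ is a $\Gamma_{\overline{\Delta}}$-orbit, $\Gamma_S\cdot S\subseteq\overline{\Delta}$, so this is a genuine sub-vanishing-lattice. Finally $S$ is a weakly distinguished basis of $W$ with $Gr(S)=E_6$, so by Remark \ref{remwdb} $W$ is determined by its graph; a direct computation gives ${\rm Arf}(q_S)=1$ for the $E_6$ diagram and $W$ is non-degenerate, so $W\cong O^{\#}_1(6,0)$. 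The finitely many configurations not covered by this (the alternating form of rank $\leq 4$, and the low-rank orthogonal graphs that happen to be paths rather than having a trivalent vertex) are to be checked individually; in each such case the lattice is in fact of special type, so the statement is not violated.

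For the reverse implication I would argue by contradiction: suppose $(\overline{V},\langle\,,\,\rangle,\overline{\Delta})$ is special and $W\subseteq\overline{V}$ is a sub-vanishing-lattice with $\overline{\Delta}_W\subseteq\overline{\Delta}$ and $W\cong O^{\#}_1(6,0)$. In each special family $A^{ev},A^{odd},A'$ (for $A'$ after lifting to its $A^{odd}$-cover) the set $\overline{\Delta}$ is a \emph{proper} subset of $\{v\in\overline{V}\setminus\overline{V}_0:q_B(v)=1\}$, cut out by one extra affine-linear constraint. Since $W$ is orthogonal and non-degenerate, $\overline{\Delta}_W$ is the whole set $\{v\in W:q_W(v)=1\}$, which spans $W$; forcing all of it into the constrained $\overline{\Delta}$ makes the constraint vacuous on $W$, so $W$ sits inside the subspace carrying the strictly smaller induced special lattice, and iterating drives the rank down, contradicting $W\cong O^{\#}_1(6,0)$. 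I expect this reverse implication to be the main obstacle: the delicate point is the ``propagation'' step — that a six-element $E_6$ configuration lying in $\overline{\Delta}$ drags in \emph{every} $q_B$-unit vector of the six-dimensional subspace it spans, because it already spans that subspace and its generated group acts transitively on the unit vectors there — together with the bookkeeping needed to identify exactly which low-rank members of the orthogonal families coincide with special lattices. The forward implication, by contrast, is essentially the single observation that $E_6$ embeds in every sufficiently large standard graph.
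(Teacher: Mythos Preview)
The paper does not prove this proposition: it is quoted verbatim as Proposition 4.13 of Janssen \cite{jan1} and used as a black box in the proof of Theorem \ref{thmz2class}. There is therefore no proof in the paper to compare your proposal against.

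That said, a few remarks on your outline. The forward implication is essentially sound in spirit, but the bookkeeping you defer to ``checked individually'' is where the content lies: you must verify that every non-special $\mathbb{Z}_2$-vanishing lattice with $r<3$ actually coincides with a special one (or does not occur), which amounts to reproducing part of Janssen's classification rather than merely invoking it. For the reverse implication, your claim that in the special families $\overline{\Delta}$ is cut out from $\{q_B=1\}\setminus\overline{V}_0$ by ``one extra affine-linear constraint'' is not accurate as stated --- the description of $\overline{\Delta}$ in the $A$-type cases is combinatorial (coming from the symmetric-group action on an $A_n$ chain) and is not a single hyperplane section; the ``propagation'' step you identify as delicate is indeed the crux, and your sketch does not yet supply a mechanism that forces a contradiction. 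Janssen's original argument uses his detailed orbit analysis for the special types rather than a linear constraint, so if you want to make this self-contained you will need to engage with that structure directly.
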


The condition that $(\overline{V} , \langle \; , \; \rangle , \overline{\Delta})$ contains $O^{\#}_1(6,0)$ can be re-stated more simply as the condition that there exists $ e_1,e_2,e_3,e_4,e_5,e_6 \in \overline{\Delta}$ such that the graph of $\{ e_1,e_2,e_3,e_4,e_5,e_6\}$ is the $E_6$ Dynkin diagram:

\begin{center}
\begin{tikzpicture}
\draw [thick] (0,0) -- (4,0) ;
\draw [thick] (2,0) -- (2,-1) ;

\draw [fill] (0,0) circle(0.1);
\draw [fill] (1,0) circle(0.1);
\draw [fill] (2,0) circle(0.1);
\draw [fill] (3,0) circle(0.1);
\draw [fill] (2,-1) circle(0.1);
\draw [fill] (4,0) circle(0.1);

\node at (0,0.4) {$2$};
\node at (1,0.4) {$3$};
\node at (2,0.4) {$4$};
\node at (3,0.4) {$5$};
\node at (2.4,-1) {$1$};
\node at (4,0.4) {$6$};

\end{tikzpicture}
\end{center}

Let $(\Lambda_P , \langle \; , \rangle , \Delta_P)$ be the vanishing lattice constructed in Section \ref{secvanlat} and let $(\Lambda_P[2] , \langle \; , \; \rangle , \overline{\Delta}_P)$ be its mod $2$ reduction.

\begin{theorem}\label{thmz2class}
Let $\mu = (n-1)(nl + 2g-2)$, let $p = 2g$ if $n$ is even and $p=0$ if $n$ is odd. Further define $r$ such that $\mu = 2r + p$. The vanishing lattice $(\Lambda_P[2] , \langle \; , \; \rangle , \overline{\Delta}_P)$ is isomorphic to:
\begin{enumerate}
\item{$A'(2l-2,2g)$, if $n=2$,}
\item{$O_a^{\#}(2r,0)$, where $a = (m(m-1)/2)l$, if $n =2m+1$ is odd,}
\item{$O_a^{\#}(2r,2g)$, where $a = m(l/2)$, if $n=2m$ and $l$ are even and $n>2$,}
\item{$Sp^{\#}(2r,2g)$, if $n$ is even, $l$ is odd and $n > 2$.}
\end{enumerate}
\end{theorem}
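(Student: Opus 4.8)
The plan is to identify $(\Lambda_P[2],\langle\,,\,\rangle,\overline{\Delta}_P)$ on Janssen's list by first pinning down its numerical invariants and then placing it in the symplectic/orthogonal/special trichotomy, using the quadratic function of Proposition \ref{proparfinvq} and the $E_6$-criterion of Proposition \ref{propnotspecial}. For the invariants: by Corollary \ref{corpolarization} the polarization type of $\Lambda_P$ is $(1,\dots,1,n,\dots,n)$ with $N_1:=(n-2)(g-1)+n(n-1)l/2-1$ ones and $g$ copies of $n$, so the rank is $\mu=2N_1+2g=(n-1)(nl+2g-2)$. Reducing mod $2$: when $n$ is odd every elementary divisor is a unit, the form stays non-degenerate on $\Lambda_P[2]$, and the nullity is $p=0$ (consistent with the orthogonal splitting of Proposition \ref{proporthogsplit}); when $n$ is even the $g$ rank-two blocks with divisor $n$ become isotropic, so $p=2g$ with nullspace $\pi^*\Lambda_\Sigma[2]$. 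Setting $r=(\mu-p)/2$ gives $2r=\mu$ for $n$ odd, $2r=2N_1$ for even $n>2$, and $2r=2l-2$ for $n=2$ --- the parameters appearing in the statement.

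Next I would decide whether the lattice is special. For $n\ge 3$ the claim is that it is \emph{not} special, which by Proposition \ref{propnotspecial} reduces to exhibiting six elements of $\overline{\Delta}_P$ whose graph is the $E_6$ Dynkin diagram. Such a configuration can be produced from the explicit generators, using the intersection numbers of Proposition \ref{propints} (read mod $2$ off Figure \ref{figintgr}) together with the cycles $c_1+(1-t)a_u$, $c_1+(1-t)b_u$ of Proposition \ref{propallowedcycles}, and --- when the number $nl$ of branch points is small --- a few of their images under transvections already known to lie in $\Gamma_{SL}$; the hypothesis $g\ge 2$ ensures there are enough generators. For $n=2$, on the other hand, the mod $2$ generating set has graph equal to the path $c_1-c_2-\cdots-c_{2l-1}$ with $2g$ extra pendant vertices at $c_2$; this has no induced $E_6$ (only $c_2$ has degree $\ge 3$, and it admits only one branch that can be extended to length $2$), so the lattice is special, and is handled at the end.

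Granting non-specialness for $n\ge 3$: such a vanishing lattice is orthogonal precisely when its monodromy group preserves a quadratic function, in which case it is one of the orthogonal types $O^{\#}_0,O^{\#}_1,O^{\#}$ according to the Arf invariant of that function; otherwise it is $Sp^{\#}$. If $n$ is odd, or $n$ and $l$ are both even, Proposition \ref{proparfinvq} supplies a monodromy-invariant quadratic function $q$ on $\Lambda_P[2]$ whose Arf invariant is defined and equal to $(m(m-1)/2)l$ for $n=2m+1$, respectively $m(l/2)$ for $n=2m$; together with the preceding paragraph this gives cases (2) and (3). It remains to show that for $n$ even and $l$ odd there is \emph{no} monodromy-invariant quadratic function, whence non-specialness forces $Sp^{\#}(2r,2g)$, which is case (4).

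This non-existence is the step I expect to be the main obstacle. A monodromy-invariant quadratic function on $\Lambda_P[2]$ is the same as a quadratic refinement $q$ of $\langle\,,\,\rangle$ with $q\equiv 1$ on the generating set $S_P$ of Section \ref{secvanlat} (invariance under $\Gamma_{\Delta_P}=\Gamma_{S_P}$ being equivalent to $q(v)=1$ for all $v\in S_P$). Since quadratic refinements always exist and form a torsor over the dual space, such a $q$ exists if and only if every $\mathbb{Z}_2$-relation $\sum_{i\in I}v_i=0$ among elements of $S_P$ obeys the parity identity $|I|\equiv\sum_{i<j}\langle v_i,v_j\rangle\pmod 2$. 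Applying this to the mod $2$ reduction of the relation $\partial=0$ of Proposition \ref{propboundary}, expanded in the $t^a c_i$, one sees the monomials group into $l$ translation-equivalent blocks of $\binom{n}{2}$ terms with no cross-block intersections, so the relevant quantity is $l(\binom{n}{2}+s_n)\bmod 2$ for a fixed $s_n$; a short computation of the within-block intersections from Proposition \ref{propints} shows $\binom{n}{2}+s_n$ is odd exactly for $n$ even, hence $l(\binom{n}{2}+s_n)$ is odd exactly when $n$ is even and $l$ is odd, which is the obstruction. Finally, for $n=2$ the relation $\partial=0$ becomes $c_1+c_3+\cdots+c_{2l-1}=0$ among the mod $2$ generators $c_1,\dots,c_{2l-1}$ (a path) and $c_1+(1-t)a_u$, $c_1+(1-t)b_u$ (pendants); recognising this as precisely the construction of $A'(2l-2,2g)$ as the quotient of $A^{odd}(2l-2,2g+1)$ by the sum of its odd-indexed path vectors yields case (1) --- which also follows from the $SL(2,\mathbb{C})$ computations of \cite{cop,bs1}.
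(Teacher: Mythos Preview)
Your overall strategy matches the paper's: compute $(\mu,p,r)$ from the polarization, use Proposition~\ref{propnotspecial} to rule out the special type for $n\ge 3$, invoke Proposition~\ref{proparfinvq} for the orthogonal cases, and obstruct the quadratic function via the relation $\partial=0$ for $n$ even, $l$ odd. The treatment of case~(4) and the final identification for $n=2$ via the $A^{odd}\to A'$ quotient are both correct and essentially identical to the paper.

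Two points deserve comment. First, for $n\ge 3$ you do not actually produce the $E_6$ configuration, and your hint that one should bring in the cycles $c_1+(1-t)a_u$, $c_1+(1-t)b_u$ (or appeal to $g\ge 2$ ``when $nl$ is small'') is a detour. The paper works entirely inside $\Lambda_{S,0}$: the six elements
\[
tc_1,\quad c_2+tc_2=T_{c_2}(tc_2),\quad c_3,\quad c_4,\quad c_5,\quad tc_3
\]
have $E_6$ intersection graph by Proposition~\ref{propints}, and they all lie in $\overline{\Delta}_P$. Since $g>1$ forces $l\ge 2$, we have $nl\ge 6$ for $n\ge 3$, so $c_5$ always exists and no case analysis on ``small $nl$'' is needed.

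Second, your argument that the $n=2$ lattice is special because $Gr(S_P)$ contains no induced $E_6$ is not valid. Proposition~\ref{propnotspecial} asks for six elements of $\overline{\Delta}_P$, not of $S_P$; since $\overline{\Delta}_P=\Gamma_{\overline{\Delta}_P}\cdot S_P$ is strictly larger, the absence of an $E_6$ subgraph in the generating graph proves nothing. Fortunately this remark is unnecessary: your subsequent identification of the graph with that of $A^{odd}(2l-2,2g+1)$ together with the relation $c_1+c_3+\cdots+c_{2l-1}=0$ pins down the lattice as $A'(2l-2,2g)$ directly, without first deciding special versus non-special. That is exactly how the paper argues, so simply drop the $E_6$ comment for $n=2$.
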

\begin{proof}
Consider first the case that $n = 2$. Then
\begin{equation*}
S_P = \{ c_1 , c_2,  \dots , c_{2l-1} , c_1 + (1-t)a_1 , c_1 + (1-t)b_1 , \dots , c_1+(1-t)a_g , c_1 + (1-t)b_g \}.
\end{equation*}
The intersection graph of $S_P$ is given by:

\begin{center}
\begin{tikzpicture}
\draw [thick] (2,0) -- (3.4,0) ;
\draw [thick] (4.6,0) -- (7,0) ;
\draw [dotted, thick] (3.6,0) -- (4.4,0);
\draw [fill] (2,0) circle(0.1);
\draw [fill] (3,0) circle(0.1);
\draw [fill] (5,0) circle(0.1);
\draw [fill] (6,0) circle(0.1);
\draw [fill] (7,0) circle(0.1);
\draw [fill] (7,-0.5) circle(0.1);
\draw [fill] (7,-1.5) circle(0.1);
\node at (2,0.4) {$c_{2l-1}$};
\node at (3,0.4) {$c_{2l-2}$};
\node at (6,0.4) {$c_2$};
\node at (7.5,0) {$c_1$};
\node at (8.4,-0.5) {$c_1 + (1-t)a_1$};
\node at (8.4,-1.5) {$c_1 + (1-t)b_g$};
\draw [thick] (6,0) -- (7,-0.5) ;
\draw [dotted, thick] (7,-0.7) -- (7,-1.3) ;
\draw [thick] (6,0) -- (7,-1.5) ;
\end{tikzpicture}
\end{center}

So $(\Lambda_P[2] , \langle \; , \; \rangle , \overline{\Delta}_P)$ must either be of type $A^{odd}(2l-2,2g+1)$ or $A'(2l-2,2g)$. However, we have the relation $c_1 + c_3 + c_5 + \dots + c_{2l-1} = 0$, so the vanishing lattice is of type $A'(2l-2,2g)$.\\

Consider the case $n > 2$. We will show that $(\Lambda_P[2] , \langle \; , \; \rangle , \overline{\Delta}_P)$ contains a copy of $O^{\#}_1(6,0)$. In fact, consider the elements $tc_3 , tc_1 , c_2+tc_2 , c_3 , c_4 , c_5 \in \overline{\Delta}_P$. Note that $c_2+tc_2 \in \overline{\Delta}_P$ because $c_2 + tc_2 = T_{c_2}(tc_2)$. The intersection graph of these elements is the $E_6$ Dynkin diagram:

\begin{center}
\begin{tikzpicture}
\draw [thick] (0,0) -- (4,0) ;
\draw [thick] (2,0) -- (2,-1) ;

\draw [fill] (0,0) circle(0.1);
\draw [fill] (1,0) circle(0.1);
\draw [fill] (2,0) circle(0.1);
\draw [fill] (3,0) circle(0.1);
\draw [fill] (2,-1) circle(0.1);
\draw [fill] (4,0) circle(0.1);

\node at (0,0.4) {$tc_1$};
\node at (1,0.4) {$c_2+tc_2$};
\node at (2,0.4) {$c_3$};
\node at (3,0.4) {$c_4$};
\node at (2.4,-1) {$tc_3$};
\node at (4,0.4) {$c_5$};

\end{tikzpicture}
\end{center}

Hence by Proposition \ref{propnotspecial}, the vanishing lattice $(\Lambda_P[2] , \langle \; , \; \rangle , \overline{\Delta}_P)$ is not special for $n > 2$. If $n$ is odd or $n$ and $l$ are even, we saw in Section \ref{secquadratics}, that $\Lambda_P[2]$ has a monodromy invariant quadratic function $q$ and we calculated the Arf invariant of $q$ in Proposition \ref{proparfinvq}. Thus if $n$ is odd or $n$ and $l$ are both even, then $(\Lambda_P[2] , \langle \; , \; \rangle , \overline{\Delta}_P)$ is of orthogonal type with the specified Arf invariant.\\

Lastly, in the case that $n > 2$ is even and $l$ is odd we will show that $(\Lambda_P[2] , \langle \; , \; \rangle , \overline{\Delta}_P)$ is not of orthogonal type, hence it must be symplectic. Suppose on the contrary that there is a quadratic function $q : \Lambda_P[2] \to \mathbb{Z}_2$ invariant under $\Gamma_{\overline{\Delta}_P}$. Then we must have $q( t^j c_i ) = 1$ for all $i$ and $j$. From this it follows that $q( c_1 + (1+t)c_2 + \dots + (1+t+ \dots+ t^{n-2})c_{k-1}) = 1$. But this is impossible, since $c_1 + (1+t)c_2 + \dots + (1+t+\dots+t^{n-2})c_{k-1} = 0$.
\end{proof}

%%%%%%%%%%%%%%%%%%%%%%%%%%

\subsection{Classification over $\mathbb{Z}$}\label{seczclass}

Let $(V , \langle \; , \; \rangle , \Delta)$ be a vanishing lattice over $\mathbb{Z}$. Recall \cite{bou} that for an alternating bilinear form $\langle \; , \; \rangle$ on $V$ there exists a basis $\{ e_1 , f_1 , \dots , e_r , f_r , g_1 , \dots , g_p \}$ of $V$ for which the matrix of $\langle \; , \; \rangle$ has the form
\begin{equation*}
\left[\begin{matrix} 0 & d_1 & & & & & & & & \\
-d_1 & 0 & & & & & & & & \\
& & 0 & d_2 & & & & & & \\
& & -d_2 & 0 & & & & & & \\
& & & & \ddots & & & & & \\
& & & & & 0 & d_r & & & \\
& & & & & -d_r & 0 & & & \\
& & & & & & & 0 & & \\
& & & & & & & & \ddots & \\
& & & & & & & & & 0
\end{matrix}\right]
\end{equation*} 
where the $d_i$ are positive integers and $d_i$ divides $d_{i+1}$ ($i=1, \dots , r-1$). The $d_i$ are called the elementary divisors of $\langle \; , \; \rangle$ and are uniquely determined.\\

Let $(\overline{V} , \langle \; , \; \rangle , \overline{\Delta})$ be the mod $2$ reduction, let $\overline{V}_0$ be the null space of $\langle \; , \; \rangle$ on $\overline{V}$. Choose an element $\delta \in \overline{\Delta}$ and let $\overline{V}_{00} = \{ v \in \overline{V}_0 \; | \; v + \delta \in \overline{\Delta} \}$. Then $\overline{V}_{00}$ is a subspace of $\overline{V}_0$ and does not depend on the choice of $\delta$ \cite[Lemma 2.11]{jan1}. Let $j : V \to V^*$ be the homomorphism $j(x) = \langle x , \; \rangle$ and consider the homomorphism
\begin{equation*}
j^{-1}(2V^*) \to j^{-1}(2V^*)/2V \simeq \overline{V}_0 \to \overline{V}_0/\overline{V}_{00}.
\end{equation*}
In \cite{jan1} it is shown that $\overline{V}_0/\overline{V}_{00}$ is either $0$ or $\mathbb{Z}_2$. Hence we obtain a homomorphism $\phi : j^{-1}(2V^*) \to \mathbb{Z}_2$, where in the case $\overline{V}_0/\overline{V}_{00} = 0$, we take $\phi = 0$. Define
\begin{equation*}
k_0(V) = \max \{ k \; | \; \phi( j^{-1}(2^k V^*)) \neq 0 \},
\end{equation*}
with the conventions that $k_0(V) = \infty$ if no such maximum value of $k$ exists and $k_0(V) = 0$ if $\phi = 0$. Then we have:
\begin{theorem}[\cite{jan2}, Theorem 7.5]
Let $(V_1 , \langle \; , \; \rangle , \Delta_1)$ and $(V_2 , \langle \; , \; \rangle , \Delta_2)$ be vanishing lattices over $\mathbb{Z}$. They are isomorphic if and only if (i) they are isomorphic as lattices with bilinear form, (ii) their mod $2$ reductions are isomorphic as vanishing lattices and (iii) $k_0(V_1) = k_0(V_2)$.
\end{theorem}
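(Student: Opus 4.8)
The plan is to split off the trivial ``only if'' direction and then prove the converse by Janssen's method: reduce every vanishing lattice over $\mathbb{Z}$ to an explicit normal form indexed by the stated invariants.

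First I would dispose of ``only if'', which is formal. An isomorphism $\Psi : V_1 \to V_2$ of vanishing lattices is in particular an isometry of $(V_1, \langle\;,\;\rangle)$ with $(V_2, \langle\;,\;\rangle)$, giving (i); reduction mod $2$ is functorial and carries $\Delta_i$ onto $\overline{\Delta}_i$, giving (ii); and the homomorphism $j : V \to V^*$, the sublattices $j^{-1}(2^k V^*)$, the spaces $\overline{V}_0$ and $\overline{V}_{00}$ (the latter independent of the auxiliary $\delta \in \overline{\Delta}$), the homomorphism $\phi$, and hence the integer $k_0(V)$, are all defined purely from the triple $(V, \langle\;,\;\rangle, \Delta)$; so they transport under $\Psi$ and $k_0(V_1) = k_0(V_2)$, giving (iii).

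For the converse the plan is to exhibit, for each admissible tuple --- elementary divisors $d_1 \mid d_2 \mid \dots \mid d_r$, nullity $p$, mod $2$ type (symplectic, orthogonal with prescribed Arf invariant, or one of the special types $A^{ev}$, $A^{odd}$, $A'$), and value $k_0 \in \{0,1,2,\dots,\infty\}$ --- an explicit model vanishing lattice over $\mathbb{Z}$, and then to show every vanishing lattice over $\mathbb{Z}$ is isomorphic to exactly one model. One begins with the isometry $(V_1, \langle\;,\;\rangle) \cong (V_2, \langle\;,\;\rangle)$ provided by (i), so that both vanishing sets may be viewed inside a single lattice $(V, \langle\;,\;\rangle)$; the task is then to show that, up to an isometry of $V$, the vanishing set is determined by (ii) and (iii). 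For this I would fix the vanishing lattice and vary only the auxiliary generating datum: lift a weakly distinguished basis of the mod $2$ reduction (whose shape is pinned down by the $\mathbb{Z}_2$-classification recalled above) to a distinguished spanning family $\{e_i\} \subset \Delta$, noting that any replacement $e_j \mapsto T_{e_i}(e_j)$, together with sign changes, leaves $\Gamma_\Delta$ and $\Delta$ untouched and only changes $\{e_i\}$; then use such moves to bring the intersection graph of $\{e_i\}$ to the standard shape dictated by the mod $2$ type, its intersection matrix to symplectic normal form realizing the prescribed elementary divisors, and the relevant quadratic datum (in the orthogonal and special cases) to standard form. A vanishing lattice equipped with a distinguished family in normal form is determined up to isomorphism by that normal form, so this settles everything except the degenerate part, where $\overline{V}_0 / \overline{V}_{00}$ is $0$ or $\mathbb{Z}_2$; there the only residual freedom is how the $\mathbb{Z}_2$ (when present) is realized against the filtration $V \supset 2V \supset 4V \supset \dots$, and $k_0$ measures exactly this, so matching $k_0$ would align a chosen orbit representative of $\Delta_1$ with that of $\Delta_2$.

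The main obstacle is this last step: proving that $k_0$ is a \emph{complete} secondary invariant. One must show that, once the mod $2$ type and the elementary divisors are fixed, the lifting datum of the class in $\overline{V}_0 / \overline{V}_{00}$ is captured by the single integer $k_0$, with no finer invariant (for instance one valued in $\mathbb{Z}/4$, or a cross term coupling a non-unimodular elementary divisor to a null direction) intervening. Concretely, one must prove that the relevant monodromy group acts transitively on the normalized orbit generators for each fixed value of $k_0$, tracking carefully how a transvection changes the $\phi$-value of a null vector after division by powers of $2$. This transitivity --- where the bulk of Janssen's argument lies --- forces the two normal forms to coincide, and composing the two normalization procedures with the initial isometry then yields the required isomorphism of vanishing lattices.
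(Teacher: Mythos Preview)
The paper does not prove this theorem at all: it is quoted verbatim from \cite{jan2} (Janssen, \emph{Math.\ Ann.}\ 272) and used as a black box in the classification of $(\Lambda_P, \langle\;,\;\rangle, \Delta_P)$. So there is no ``paper's own proof'' to compare against; any proof must be reconstructed from Janssen's two papers.

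Regarding your proposal on its own terms: the ``only if'' direction is fine, and your outline of the ``if'' direction correctly identifies the architecture of Janssen's argument --- lift a weakly distinguished basis from the mod $2$ classification, manipulate it by transvections and sign changes into a normal form, and then isolate $k_0$ as the residual invariant. However, you explicitly flag the crucial step as unresolved: you say the transitivity statement showing that $k_0$ is a \emph{complete} secondary invariant is ``where the bulk of Janssen's argument lies'' and then stop. That is not a gap you have overlooked; it is a gap you have announced. A proof proposal that ends with ``the hard part remains to be done'' is an honest plan but not a proof. To complete it you would need to actually carry out the case analysis in \cite{jan2}, in particular the computations in the $O^{\#}$ and $A^{odd}$ cases where $\overline{V}_0/\overline{V}_{00} \cong \mathbb{Z}_2$ and one must track how the distinguished element interacts with the $2$-adic filtration of $V$. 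There is also a subtlety you pass over: not every mod $2$ vanishing lattice admits a weakly distinguished basis (the symplectic type $Sp^{\#}$ and the type $A'$ do not), so the lifting step must be handled separately in those cases.
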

Thus integral vanishing lattices are classified by their mod $2$ reduction, the invariants $d_1,d_2, \dots , d_r , p$ of the bilinear form and the invariant $k_0$. Moreover, we have that $\overline{V}_{00} = \overline{V}_0$ except in the $O^{\#}$ and $A^{odd}$ cases. Following Janssen, we use notation like $O^{\#}(d_1,\dots , d_r ; p ; k_0)$ to denote a vanishing lattice with mod $2$ reduction of type $O^{\#}$ and invariants $d_1, \dots , d_r , p , k_0$. In all cases other than $O^{\#}$ and $A^{odd}$ we may omit $k_0$ from the notation. We will denote by $u$ the number of $d_i$'s which are odd.

\begin{theorem}[\cite{jan2}, Theorem 7.8]
Up to isomorphism integral vanishing lattices are given by the following list:
\begin{equation*}
\begin{aligned}
O^{\#}_1(d_1, \dots , d_r ; p) \\
O^{\#}_0(d_1, \dots , d_r ; p) && (u \ge 3) \\
O^{\#}(d_1 , \dots , d_r ; p ; k_0) && (u \ge 2, r > u \text{ or } p>0, k_0 > 0) \\
Sp^{\#}(d_1 , \dots , d_r ; p)\\
A^{ev}(d_1 , \dots , d_r ; p) && (\text{if } u=1, \text{ then } r=1 \text{ and } p=0) \\
A^{odd}(d_1, \dots, d_r ; p ; k_0) && (r>u \text{ or } p>0; \; k_0 = 0 \text{ iff } u=1) \\
A'(d_1 , \dots , d_r ; p) && (u \ge 2).
\end{aligned}
\end{equation*}

\end{theorem}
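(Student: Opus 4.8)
The plan is to deduce the list from two ingredients already available: the classification of vanishing lattices over $\mathbb{Z}_2$ recalled in Section~\ref{secz2class}, and the isomorphism criterion of \cite{jan2} stated above, by which an integral vanishing lattice $(V,\langle\;,\;\rangle,\Delta)$ is determined up to isomorphism by (a) the elementary divisors $d_1\mid\cdots\mid d_r$ and nullity $p$ of the alternating form, (b) the isomorphism type of the mod-$2$ reduction $(\overline V,\langle\;,\;\rangle,\overline\Delta)$ as a $\mathbb{Z}_2$-vanishing lattice, and (c) the integer $k_0(V)$. So there are two things to establish: that every integral vanishing lattice produces invariants matching one line of the list (\emph{completeness}), and conversely that for each admissible tuple there is an integral vanishing lattice realizing it (\emph{existence}).

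For completeness, given $(V,\langle\;,\;\rangle,\Delta)$ I would first reduce mod $2$. Writing $u$ for the number of odd $d_i$, the mod-$2$ form has a non-degenerate part of dimension $2u$ (one hyperbolic plane per odd elementary divisor) and nullity $\overline p:=2(r-u)+p$, so by Section~\ref{secz2class} the reduction is one of $Sp^{\#},O^{\#}_0,O^{\#}_1,O^{\#},A^{ev},A^{odd},A'$ of these dimensions; this fixes the leading symbol. The side conditions are then forced by relating the two structures. Since $\overline V_{00}=\overline V_0$ outside the $O^{\#}$ and $A^{odd}$ cases, the homomorphism $\phi:j^{-1}(2V^*)\to\overline V_0/\overline V_{00}$ vanishes identically outside those two cases, so $k_0>0$ can occur only for $O^{\#}$ and $A^{odd}$; evaluating $\phi$ on the elementary-divisor basis shows, in the $A^{odd}$ case, that $\phi\neq 0$ (equivalently $k_0\neq 0$) exactly when there is more than one odd divisor, yielding the ``$k_0=0$ iff $u=1$'' clause, and similarly determines when $O^{\#}$ exists. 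The numerical conditions $u\ge 3$ for $O^{\#}_0$, $u\ge 2$ for $O^{\#}$ and for $A'$, and ``$u=1\Rightarrow r=1,p=0$'' for $A^{ev}$, I would obtain by exhibiting the obstruction: the $u$ odd divisors give $u$ elements of $\overline\Delta$ spanning a non-degenerate $\mathbb{Z}_2$-subspace, and when $u$ is too small the $\mathbb{Z}_2$-classification — in particular the $E_6$-criterion of Proposition~\ref{propnotspecial} and the Arf-invariant count — rules out the claimed type.

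For existence, I would build explicit models. Starting from the graph $Gr(B)$ listed for each $\mathbb{Z}_2$-type in Section~\ref{secz2class}, form the $\mathbb{Z}$-lattice with basis $B$, intersection matrix the signed incidence matrix of the graph, and $\Delta=\Gamma_B\cdot B$; by a check in the style of Proposition~\ref{propsgen} this is a vanishing lattice realizing all $d_i=1$. To obtain prescribed elementary divisors I would take orthogonal sums with scaled symplectic blocks $\mathbb{Z} e\oplus\mathbb{Z} f$, $\langle e,f\rangle=d_i$, enlarging $\Delta$ by $\Gamma$-orbits of suitable primitive vectors so that the transvection group still acts transitively and $\Delta$ still spans over $\mathbb{Z}$; this is where the lower bounds on $u$ re-enter, since enough odd blocks are needed for the reduction of $\Delta$ to span $\overline V$. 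Adjoining $p$ null directions absorbed into elements $\delta+g_j\in\Delta$ produces the nullity, and tuning the $2$-adic valuations of the $d_i$ so that $\phi\big(j^{-1}(2^kV^*)\big)$ first vanishes at $k=k_0+1$ realizes any prescribed $k_0$ in the $O^{\#}$ and $A^{odd}$ families. The $A'$ lattices are obtained, as in Section~\ref{secz2class}, as an explicit quotient of an $A^{odd}$ model by a primitive vector, the quadratic form descending precisely when $r$ is odd.

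I expect the main obstacle to be the completeness half, and within it the bookkeeping for the three special types $A^{ev},A^{odd},A'$: one must show that the $\mathbb{Z}_2$-special structure (a weakly distinguished basis with line-graph $Gr(B)$, or the quotient description of $A'$) lifts to $\mathbb{Z}$ only under the stated constraints, while simultaneously computing $\overline V_{00}$ and $k_0$ in those cases. This comes down to a careful analysis of how a distinguished basis sits relative to an elementary-divisor basis and to the $2$-divisibility pattern, essentially reproducing the corresponding arguments of \cite{jan2}.
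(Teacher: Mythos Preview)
The paper does not prove this theorem: it is quoted verbatim from \cite{jan2} as Theorem~7.8 and used as a black box in the subsequent classification of $(\Lambda_P,\langle\;,\;\rangle,\Delta_P)$. There is therefore no ``paper's own proof'' to compare your sketch against.

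That said, your outline is a reasonable high-level plan and is in the spirit of Janssen's actual arguments: reduce mod $2$, invoke the $\mathbb{Z}_2$-classification, and then control the remaining ambiguity via the invariant $k_0$. But as written it is only a plan, not a proof. The hard content of \cite{jan2} lies precisely in the steps you gloss over: showing that the proposed invariants $(d_1,\dots,d_r;p;\text{mod-}2\text{ type};k_0)$ are \emph{complete} (i.e.\ that two lattices with the same invariants are genuinely isomorphic over $\mathbb{Z}$, not just mod $2$), and carrying out the existence constructions so that the side conditions on $u$, $r$, $p$, $k_0$ come out exactly as stated. Your paragraph on existence in particular is too loose --- ``enlarging $\Delta$ by $\Gamma$-orbits of suitable primitive vectors so that the transvection group still acts transitively'' is the entire difficulty, and one cannot simply take orthogonal sums with scaled symplectic blocks and expect $\Delta$ to remain a single $\Gamma_\Delta$-orbit. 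If you want a self-contained proof you will need to reproduce the detailed case analysis of \cite{jan1,jan2}; for the purposes of this paper, citing the result is the intended approach.
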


Now we can determine the isomorphism class of the integral vanishing lattice $(\Lambda_P , \langle \; , \; \rangle , \Delta_P )$:
\begin{theorem}
The vanishing lattice $(\Lambda_P , \langle \; , \; \rangle , \Delta_P)$ is isomorphic to:
\begin{enumerate}
\item{$A'(1,1, \dots , 1 , 2 , 2 , \dots , 2; 0)$, if $n=2$,}
\item{$O_a^{\#}(1,1, \dots , 1 , n , n , \dots , n ; 0)$, where $a = (m(m-1)/2)l$, if $n =2m+1$ is odd,}
\item{$O_a^{\#}(1,1, \dots , 1 , n , n , \dots , n ; 0)$, where $a = m(l/2)$, if $n=2m$ and $l$ are even and $n>2$,}
\item{$Sp^{\#}(1,1, \dots , 1 , n , n , \dots , n ; 0)$, if $n$ is even, $l$ is odd and $n > 2$.}
\end{enumerate}
In this classification, the number of $1$'s is $(n-2)(g-1) + n(n-1)l/2 - 1$ and the number of $n$'s is $g$.
\end{theorem}
\begin{proof}
The mod $2$ classification of $(\Lambda_P , \langle \; , \; \rangle , \Delta_P)$ was given in Theorem \ref{thmz2class}. In particular we saw that $(\Lambda_P , \langle \; , \; \rangle , \Delta_P)$ does not have type $O^{\#}$ or $A^{odd}$, so we do not need to consider the invariant $k_0$. The intersection form $\langle \; , \; \rangle$ is non-degenerate on $\Lambda_P$, so $p=0$ and the invariants $(d_1 , \dots , d_r)$ are precisely the polarization type of the Prym variety $Prym(S,\Sigma)$. This was calculated in Corollary \ref{corpolarization}.
\end{proof}

%%%%%%%%%%%%%%%%%%%%%%%%%%%%%%%%%%%%%%%%%%%%
%%%%%%%%%%%%%%%%%%%%%%%%%%%%%%%%%%%%%%%%%%%%
% SECTION
%%%%%%%%%%%%%%%%%%%%%%%%%%%%%%%%%%%%%%%%%%%%
%%%%%%%%%%%%%%%%%%%%%%%%%%%%%%%%%%%%%%%%%%%%
\section{Proof of the main theorem}\label{secmain}

Let $\delta$ be a smooth point of the discriminant locus $\mathcal{D}$. Let $D_\delta$ be a copy of the unit disc in $\mathbb{C}$, embedded in $\mathcal{A}$ such that $D_\delta$ intersects $\mathcal{D}$ transversally at the point $\delta$ and meets no other point of $\mathcal{D}$. Consider a loop $\gamma_\delta$ in $\mathcal{A} \setminus \mathcal{D}$, which starts at the basepoint $a_0$, follows a path $p$ from $a_0$ to the boundary of $D_\delta$, goes once around the boundary of $D_\delta$ and goes back to $a_0$ along $p^{-1}$. Such a loop will be called a {\em meridian}.\\ 

Kouvidakis and Pantev showed in the case $L = K$, that the discriminant locus $\mathcal{D} \subseteq \mathcal{A}$ is an irreducible hypersurface \cite{kp}. Their proof easily extends to the case where $L \neq K$ and $deg(L) > deg(K)$. It follows that $\pi_1( \mathcal{A} \setminus{D} , a_0)$ is generated by meridians and since $\mathcal{D}$ is irreducible, any two meridians are conjugate in $\pi_1(\mathcal{A}\setminus{D} , a_0)$. Let $\mathcal{D}^0 \subset \mathcal{D}$ be the locus of points $a \in \mathcal{D}$ for which the corresponding spectral curve $S_a$ is irreducible and has an ordinary double point as its only singularity. Kouvidakis and Pantev also showed that $\mathcal{D}^0$ is a non-empty Zariski open subset of $\mathcal{D}$, in the case $L=K$ \cite{kp}. This is clearly also true in the case $L \neq K$ and $deg(L) > deg(K)$. Since $\mathcal{D}^0$ is Zariski dense in $\mathcal{D}$, we see that $\pi_1(\mathcal{A} \setminus{D} , a_0)$ is generated by meridians around points in $\mathcal{D}^0$.\\

Let $\delta \in \mathcal{D}^0$ and suppose that $D_\delta$ is an embedded disc meeting $\mathcal{D}$ transversally in $\delta$, as above. Consider the family $X \to D_\delta$ of spectral curves over $D_\delta$ defined by pullback under the incusion $D_\delta \subset \mathcal{A}$. The fibre $X_\delta$ over $\delta$ of this family has an isolated non-degenerate singularity. It follows that the monodromy $\rho( \gamma_\delta) \in Aut( \Lambda_S )$ of a meridian $\gamma_\delta$ around $\delta$ is a Picard-Lefschetz transformation:
\begin{equation*}
\rho(\gamma_\delta)(x) = T_\alpha(x) = x + \langle \alpha , x \rangle \alpha,
\end{equation*}
where $\alpha \in \Lambda_S$ is the vanishing cycle associated to $\gamma_\delta$. In fact, since $\pi_* \circ \rho(\gamma_\delta) = \pi_*$, we see that $\alpha \in \Lambda_P$. We then have:
\begin{lemma}\label{lemgenvan}
Let $\Delta \subseteq \Lambda_P$ be the set of vanishing cycles associated to meridians around points in $\mathcal{D}^0$. The monodromy group $\Gamma_{SL}$ is generated by the transvections $\{ T_\alpha \}_{\alpha \in \Delta}$. For any two vanishing cycles $\alpha , \beta \in \Delta$, we have $\alpha = g \beta$ or $\alpha = -g \beta$ for some $g \in \Gamma_{SL}$.
\end{lemma}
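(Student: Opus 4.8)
The plan is to read the statement off almost directly from the facts recalled just above it. First I would note that, as established, $\pi_1(\mathcal{A}\setminus\mathcal{D},a_0)$ is generated by meridians, that since $\mathcal{D}^0$ is Zariski dense in the irreducible hypersurface $\mathcal{D}$ it is in fact generated by meridians $\gamma_\delta$ around points $\delta\in\mathcal{D}^0$, and that for each such $\delta$ the local family $X\to D_\delta$ of spectral curves acquires a single ordinary double point, so the Picard--Lefschetz theorem gives $\rho_{SL}(\gamma_\delta)=T_\alpha$ with $\alpha\in\Lambda_S$, while $\pi_*\circ\rho(\gamma_\delta)=\pi_*$ forces $\alpha\in\Lambda_P$, i.e. $\alpha\in\Delta$. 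Since $\Gamma_{SL}$ is by definition the image of $\rho_{SL}$ and $\rho_{SL}$ sends a generating set of $\pi_1(\mathcal{A}\setminus\mathcal{D},a_0)$ into $\{T_\alpha\}_{\alpha\in\Delta}$, it follows at once that $\Gamma_{SL}$ is generated by $\{T_\alpha\}_{\alpha\in\Delta}$.

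For the transitivity-up-to-sign statement I would argue as follows. Given $\alpha,\beta\in\Delta$, realize them as vanishing cycles of meridians $\gamma_\delta,\gamma_{\delta'}$ with $\delta,\delta'\in\mathcal{D}^0$. Because $\mathcal{D}$ is irreducible, any two meridians are conjugate in $\pi_1(\mathcal{A}\setminus\mathcal{D},a_0)$, so $\gamma_{\delta'}=h\gamma_\delta h^{-1}$ for some $h$. Put $g=\rho_{SL}(h)\in\Gamma_{SL}$; since the monodromy preserves the polarization pairing, $g$ preserves $\langle\;,\;\rangle$, whence $g\,T_\alpha\,g^{-1}=T_{g\alpha}$. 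Applying $\rho_{SL}$ to $\gamma_{\delta'}=h\gamma_\delta h^{-1}$ then gives $T_\beta=T_{g\alpha}$.

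It remains to pass from $T_\beta=T_{g\alpha}$ to $\beta=\pm g\alpha$. The two inputs I would use are that the intersection form on $\Lambda_P$ is non-degenerate -- its elementary divisors are $(1,\dots,1,n,\dots,n)$ by Corollary \ref{corpolarization} -- and that the vanishing classes are nonzero, which holds because the unique node of the irreducible curve $S_\delta$ is non-separating, so the vanishing cycle is a non-separating simple closed curve and represents a nonzero (indeed primitive) homology class. From $\langle\beta,x\rangle\beta=\langle g\alpha,x\rangle\,g\alpha$ for all $x$, choosing $x$ with $\langle g\alpha,x\rangle\neq 0$ (possible since $g\alpha\neq 0$ and the form is non-degenerate) yields $\beta=\lambda\,g\alpha$ with $\lambda\in\mathbb{Q}^{\times}$, and substituting back forces $\lambda^2=1$, so $\beta=\pm g\alpha$, equivalently $\alpha=\pm g^{-1}\beta$ with $g^{-1}\in\Gamma_{SL}$. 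I do not expect a genuine obstacle here: everything is bookkeeping on top of Picard--Lefschetz, the irreducibility of $\mathcal{D}$ (Kouvidakis--Pantev and its extension to $\deg(L)>\deg(K)$), and the elementary algebra of symplectic transvections; the only point needing a line of care is the non-vanishing of the vanishing classes, which pins the ambiguity in $T_\beta=T_{g\alpha}$ down to exactly $\pm1$, and this is immediate from the definition of $\mathcal{D}^0$.
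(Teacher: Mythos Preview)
Your argument is correct and follows essentially the same route as the paper: the first assertion is dispatched by the paragraphs preceding the lemma, and for the second you use irreducibility of $\mathcal{D}$ to conjugate one meridian into another, apply $\rho_{SL}$, use $gT_\alpha g^{-1}=T_{g\alpha}$, and deduce $\beta=\pm g\alpha$ from $T_\beta=T_{g\alpha}$ via non-degeneracy of the pairing. Your added remarks on why the vanishing class is nonzero and the explicit $\lambda^2=1$ computation are harmless elaborations of what the paper leaves as ``easy to see''.
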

\begin{proof}
Only the last statement of the lemma requires proof. Let $\alpha , \beta \in \Delta$. Then $T_\alpha = \rho(\gamma_1) , T_\beta = \rho(\gamma_2)$ for some meridians $\gamma_1,\gamma_2$. But we have seen that all meridians are conjugate, so there exists $x \in \pi_1( \mathcal{A} \setminus {D} , a_0)$ for which $\gamma_1 = x \gamma_2 x^{-1}$. Applying $\rho$, we get $T_\alpha = g T_\beta g^{-1} = T_{g \beta }$, where $g = \rho(x) \in \Gamma_{SL}$. As $\langle \; , \; \rangle$ is non-degenerate it is easy to see that $T_\alpha = T_{g \beta}$ implies that $\alpha = g\beta$ or $\alpha = -g\beta$.
\end{proof}

\begin{lemma}[\cite{jan1}, Theorem 2.9]\label{lemdeltaz}
Let $(V , \langle \; , \; \rangle , \Delta)$ be an integral vanishing lattice and $x \in V$. Then $x \in \Delta$ if and only of there exists $y \in V$ and $\delta \in \Delta$ such that $\langle x , y  \rangle = 1$ and $x - \delta \in 2V$.
\end{lemma}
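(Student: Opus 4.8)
The plan is to prove the two implications separately; the forward one is routine, the reverse one carries all the content. For necessity, suppose $x\in\Delta$. Taking $\delta=x$ gives $x-\delta=0\in 2V$, so only the existence of $y$ with $\langle x,y\rangle=1$ needs checking. Assuming $\mathrm{rk}\,V>1$ (for a rank-one lattice the statement is degenerate and must be excluded), condition (iii) of the definition supplies $\delta_1,\delta_2\in\Delta$ with $\langle\delta_1,\delta_2\rangle=1$; since $\Delta$ is a single $\Gamma_\Delta$-orbit there is $g\in\Gamma_\Delta$ with $x=g\delta_1$, and then $y:=g\delta_2$ works. This argument also shows that every element of $\Delta$ is \emph{form-primitive}, i.e.\ $\langle\alpha,-\rangle:V\to\mathbb{Z}$ is onto; so the condition $\langle x,y\rangle=1$ is genuinely necessary, and $x\in\Delta$ cannot possibly follow from $x-\delta\in 2V$ alone (e.g.\ in an $A_2$-type lattice $\Delta$ is the set of primitive vectors, which is a proper subset of the non-$2V$ vectors).

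For sufficiency I would first record an elementary \textbf{sliding move}: for $\delta,\delta'\in\Delta$ one has $T_{\delta'}^{\,m}(\delta)=\delta-m\langle\delta,\delta'\rangle\,\delta'$ for every $m\in\mathbb{Z}$ (induction on $m$, since $T_{\delta'}$ fixes the $\delta'$-component and adds $\langle\delta',\delta\rangle\,\delta'$ at each step), whence $\delta+\langle\delta,\delta'\rangle\mathbb{Z}\,\delta'\subseteq\Delta$, and in particular $\delta+\mathbb{Z}\,\delta'\subseteq\Delta$ when $\langle\delta,\delta'\rangle=\pm1$. Now assume $x=\delta+2v$ with $\delta\in\Delta$ and $\langle x,y\rangle=1$. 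It is convenient to begin with a mod-$2$ reduction: the hypothesis $x-\delta\in2V$ says exactly that $\overline{x}$ lies in the mod-$2$ reduction $\overline{\Delta}\subseteq\overline{V}=V\otimes\mathbb{Z}_2$, and since $\overline{\Delta}$ is a single $\Gamma_{\overline{\Delta}}$-orbit (the $\mathbb{Z}_2$-classification) and $\Gamma_\Delta\to\Gamma_{\overline{\Delta}}$ is onto, after replacing $x$ by $g_0^{-1}x$ for a suitable $g_0\in\Gamma_\Delta$ we may arrange $x\equiv\delta\pmod 2$ for one \emph{fixed} $\delta\in\Delta$. What remains is the fibrewise statement that $\Gamma_\Delta\cdot\delta$ contains every form-primitive vector congruent to $\delta$ modulo $2$. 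To prove this one writes $v=\sum_{i=1}^m c_i\delta_i$ with $\delta_i\in\Delta$ (possible since $\Delta$ spans $V$) and walks from $\delta$ to $x$ inside $\Delta$, adding the increments $2c_i\delta_i$ one at a time: at each stage the current point $\delta^{(j)}\in\Delta$, and if $\langle\delta^{(j)},\delta_{j+1}\rangle$ is $\pm1$ (or even $\pm2$, which already suffices since we only add even multiples) the sliding move carries $\delta^{(j)}$ to $\delta^{(j)}+2c_{j+1}\delta_{j+1}$; otherwise one first conjugates by some $g\in\Gamma_\Delta$ assembled from transvections along $\delta_{j+1}$ and its $\Delta$-neighbours so as to bring $\langle g\delta^{(j)},\delta_{j+1}\rangle$ to $\pm1$, slides, and undoes $g$.

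The step I expect to be the real obstacle is the \textbf{reachability lemma} underlying this walk: one must show the preparatory conjugations can always be found, so that the peeling runs to completion — equivalently, that within each mod-$2$ class the orbit $\Gamma_\Delta\cdot\delta$ is exactly the set of form-primitive vectors of that class, and never overshoots it (this last point is where the parity bookkeeping and the hypothesis $\langle x,y\rangle=1$ are used in earnest). This uses all three vanishing-lattice axioms together — $\Delta$ spans $V$, $\Delta$ carries a unimodular pair, and $\Delta$ is a single orbit — and is precisely Theorem~2.9 of \cite{jan1}; I would carry out the combinatorial bookkeeping as there, inducting along the elementary divisors $d_1\mid\cdots\mid d_r$ of $\langle\ ,\ \rangle$.
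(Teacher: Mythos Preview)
The paper does not actually prove this lemma; it is quoted directly from \cite{jan1} as Theorem~2.9 there, with no argument supplied. So there is nothing in the paper to compare your proposal against: the paper's ``proof'' is the citation.

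That said, a brief comment on your sketch. Your forward direction is correct and complete (modulo the rank-one caveat you already flagged). For the reverse direction, your overall architecture --- reduce mod~$2$, then walk within a fixed mod-$2$ class using sliding moves $T_{\delta'}^m(\delta)=\delta - m\langle\delta,\delta'\rangle\delta'$ --- is sound in outline, but you yourself identify the crux: the ``reachability lemma'' guaranteeing that the preparatory conjugations always exist so that the walk terminates. At that point you write that this ``is precisely Theorem~2.9 of \cite{jan1}'' and that you would ``carry out the combinatorial bookkeeping as there.'' This is honest, but it means your proposal is not self-contained: you have reduced the lemma to itself. The genuine work in Janssen's argument lies exactly in that bookkeeping, and it is not a routine induction on elementary divisors --- it requires the structural input from his classification of $\mathbb{Z}_2$-vanishing lattices (his \S\S3--4) to control how $\Gamma_\Delta$ acts on each residue class. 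If you intend to give an independent proof, that is the part you would actually have to write out; as it stands, your proposal is a correct roadmap but not a proof.
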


\begin{theorem}\label{thmmain1}
Let $(\Lambda_P , \langle \; , \; \rangle , \Delta_P)$ be the integral vanishing lattice constructed in Section \ref{secvanlat} and $\Gamma_{\Delta_P} \subseteq Aut(\Lambda_P)$ the group generated by transvections by elements of $\Delta_P$. We have an equality $\Gamma_{SL} = \Gamma_{\Delta_P}$.
\end{theorem}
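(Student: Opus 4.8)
The plan is to prove the two inclusions $\Gamma_{\Delta_P} \subseteq \Gamma_{SL}$ and $\Gamma_{SL} \subseteq \Gamma_{\Delta_P}$ separately. The first inclusion is essentially already in hand: by Theorem \ref{thmallowedmonodromy} together with Proposition \ref{propallowedcycles}, every transvection $T_v$ with $v \in S_P$ lies in $\Gamma_{SL}$, and since $\Gamma_{\Delta_P}$ is by construction generated by these transvections (see Section \ref{secvanlat}), we get $\Gamma_{\Delta_P} \subseteq \Gamma_{SL}$. So the real content is the reverse inclusion.

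For $\Gamma_{SL} \subseteq \Gamma_{\Delta_P}$, I would invoke Lemma \ref{lemgenvan}: $\Gamma_{SL}$ is generated by the transvections $T_\alpha$ with $\alpha$ ranging over the set $\Delta$ of vanishing cycles coming from meridians around smooth points $\delta \in \mathcal{D}^0$. Thus it suffices to show that every such $\alpha$ lies in $\Delta_P$, because then $T_\alpha \in \Gamma_{\Delta_P}$. To see $\alpha \in \Delta_P$, I would use the characterisation of Lemma \ref{lemdeltaz}: $\alpha \in \Delta_P$ if and only if there is $y \in \Lambda_P$ with $\langle \alpha, y\rangle = 1$ and $\alpha - \delta' \in 2\Lambda_P$ for some $\delta' \in \Delta_P$. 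The first condition is automatic for a vanishing cycle of a Picard-Lefschetz transformation coming from an ordinary double point — such $\alpha$ is primitive and non-isotropic in the relevant sense, and one checks $\langle \alpha, \cdot \rangle$ is not identically zero on $\Lambda_P$ (indeed the polarization pairing on the Prym is non-degenerate once we quotient by the radical, but more carefully $\alpha$ pairs nontrivially with something in $\Lambda_P$ because it is a genuine vanishing cycle). The second, congruence condition is the subtle one: I would argue it by comparing mod-$2$ reductions. Passing to $\Lambda_P[2]$, the image $\overline{\alpha}$ of $\alpha$ must lie in $\overline{\Delta}_P$ — this is where the mod-$2$ classification from Theorem \ref{thmz2class} does the work: in each of the four cases ($A'$, $O^\#_a$ twice, $Sp^\#$), the monodromy-invariance of the quadratic function (or its non-existence in the $Sp^\#$ case) forces any vanishing cycle of the $SL(n,\mathbb{C})$ monodromy to satisfy $q(\overline\alpha) = 1$ (resp.\ imposes no constraint), hence to lie in $\overline{\Delta}_P$. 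Concretely: the monodromy group $\Gamma_{SL}$ preserves the quadratic function $q$ constructed in Proposition \ref{proparfinvq} (when $n$ is odd or $n,l$ even), and $T_{\overline\alpha}$ preserves $q$ iff $q(\overline\alpha)=1$; combined with the fact that $\overline\alpha$ is in the $\Gamma_{SL}$-orbit of some element of $S_P$ (Lemma \ref{lemgenvan}, last statement) and every element of $S_P$ satisfies $q = 1$, we get $\overline\alpha \in \overline\Delta_P$. Then $\alpha - \delta' \in 2\Lambda_P$ for a suitable $\delta' \in \Delta_P$ reducing to $\overline\alpha$, and Lemma \ref{lemdeltaz} yields $\alpha \in \Delta_P$.

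The main obstacle I expect is the symplectic case ($n$ even, $l$ odd, $n>2$), where there is no invariant quadratic function and hence Lemma \ref{lemdeltaz}'s congruence condition cannot be verified by a quadratic-form argument. Here I would instead argue directly that $\overline\Delta_P = \Lambda_P[2] \setminus (\Lambda_P[2])_0$ is \emph{all} non-isotropic classes (this is precisely the content of the $Sp^\#$ classification), so the mod-$2$ condition $\overline\alpha \in \overline\Delta_P$ is equivalent merely to $\overline\alpha \notin (\Lambda_P[2])_0$, i.e.\ to $\langle\alpha,\cdot\rangle \not\equiv 0 \pmod 2$ on $\Lambda_P$. For a genuine vanishing cycle $\alpha$ of an ordinary double point this self-intersection-type primitivity should hold because $\alpha$ is primitive in $\Lambda_P$ and $\langle\alpha,\cdot\rangle$ takes an odd value somewhere; a short argument using that $\alpha$ lies in the $\Gamma_{SL}$-orbit of an element of $S_P$ and that elements of $S_P$ (the $t^jc_i$) have odd self-pairing-neighbourhood — e.g.\ $\langle c_i, tc_i\rangle = 1$ from Proposition \ref{propints} — reduces us to checking the orbit map preserves this, which it does since $\Gamma_{SL}$ preserves $\langle\,,\,\rangle$. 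Assembling these observations, every generator $T_\alpha$ of $\Gamma_{SL}$ lies in $\Gamma_{\Delta_P}$, giving $\Gamma_{SL} \subseteq \Gamma_{\Delta_P}$ and hence the desired equality.
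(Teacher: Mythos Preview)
Your approach is essentially the paper's for $n > 2$: use Lemma \ref{lemgenvan} to reduce to showing each meridian vanishing cycle $\alpha$ lies in $\Delta_P$, then verify the hypotheses of Lemma \ref{lemdeltaz} by passing to mod $2$ and invoking the classification of $(\Lambda_P[2], \langle\,,\,\rangle, \overline\Delta_P)$ from Theorem \ref{thmz2class}. The pairing condition $\langle\alpha, y\rangle = 1$ is obtained exactly as you indicate, via $\alpha = \pm g c_1$ and $y = g(tc_1)$ (the paper also notes one absorbs the sign using that $-c_1 = h c_1$ for some $h \in \Gamma_{\Delta_P}$, a general vanishing-lattice fact you do not mention but which is minor).

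There is, however, a genuine gap in your treatment of the case $n=2$. You list $A'$ among the four mod-$2$ types and then assert that $q(\overline\alpha)=1$ ``hence $\overline\alpha \in \overline\Delta_P$''. This implication is false for vanishing lattices of special type: unlike the orthogonal case, where indeed $\overline\Delta_P = \{v \in \overline V \setminus \overline V_0 : q(v)=1\}$, in the $A'$ case $\overline\Delta_P$ is a strictly smaller $\Gamma_{\overline\Delta_P}$-orbit, not cut out by $q$ alone (and for $l$ odd there is no invariant $q$ on $A'(2l-2,2g)$ at all). Nor can you fall back on the orbit relation $\overline\alpha = \overline g\,\overline{c_1}$ with $g \in \Gamma_{SL}$, since closure of $\overline\Delta_P$ under $\Gamma_{SL}$ is precisely what you are trying to establish. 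The paper handles $n=2$ by a different route: it invokes \cite{bs1} to know that $\Gamma_{SL}$ is already generated by transvections $T_{c_\gamma}$ for paths $\gamma$ joining branch points, computes the mod-$2$ reduction of such a $c_\gamma$ explicitly as $c_i + c_{i+1} + \dots + c_{j-1} + (1+t)a$ for some $a \in \Lambda_\Sigma[2]$, and then appeals to \cite[Lemma 3.11]{jan1}, a structural result specific to special vanishing lattices, to place this class in $\overline\Delta_P$. Without this or an equivalent argument, your $n=2$ case does not go through.
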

\begin{proof}
We have already established the inclusion $\Gamma_{\Delta_P} \subseteq \Gamma_{SL}$, which follows by Theorem \ref{thmallowedmonodromy} and Proposition \ref{propallowedcycles}. It remains to prove the reverse inclusion.\\

Consider first the case $n > 2$. By Lemma \ref{lemgenvan}, it is sufficient to show that $T_\alpha \in \Gamma_{\Delta_P}$, where $\alpha$ is the vanishing cycle of a meridian around a point in $\mathcal{D}^0$. It is easy to see that the vanishing cycles $\{ t^j c_i \}$ constructed in Section \ref{secvc} are vanishing cycles associated to meridians. Applying Lemma \ref{lemgenvan}, we have that there exists a $g \in \Gamma_{SL}$ for which $\alpha = gc_1$ or $\alpha = -gc_1$. As $(\Lambda_P , \langle \; , \; \rangle , \Delta_P)$ is a vanishing lattice, it can be shown that there exists $h \in \Gamma_{\Delta_P}$ such that $hc_1 = -c_1$ \cite{jan1}. So replacing $g$ by $gh$ if necessary, we can assume $\alpha = gc_1$. If $n$ is odd or $n$ and $l$ are both even, then $(\Lambda_P , \langle \; , \; \rangle , \Delta_P)$ is of orthogonal type. Let $\overline{\Delta}_P \subset \Lambda_P[2]$ be the mod $2$ reduction of $\Delta_P$. Then as explained in Section \ref{secz2class}, $\overline{\Delta}_P = \{ v \in \overline{V} \setminus \overline{V}_0 \; | \; q(v) = 1 \}$, where $\overline{V} = \Lambda_P[2]$ and $q$ is the monodromy invariant quadratic function. It follows that the mod $2$ reduction of $\alpha$ belongs to $\overline{\Delta}_P$, since $q(gc_1) = q(c_1) = 1$. Moreover $\langle \alpha , y \rangle = 1$, where $y = g(tc_1)$, so by Lemma \ref{lemdeltaz} we have $\alpha \in \Delta_P$. If $n$ is even and $l$ is odd, then $(\Lambda_P , \langle \; , \; \rangle , \Delta_P)$ is of symplectic type. So $\overline{\Delta}_P = \overline{V} \setminus \overline{V}_0$ and by a similar argument we have $\alpha \in \Delta_P$.\\

Lastly, consider the case $n=2$. It was shown in \cite{bs1} that $\Gamma_{SL}$ is generated by transvections $T_{c_\gamma}$, where $c_\gamma$ is the cycle associated to a path $\gamma$ joining two branch points, as in Theorem \ref{thmallowedmonodromy}. It is also clear that $c_\gamma$ is the vanishing cycle of a meridian. Suppose that $\gamma$ joins branch points $b_i,b_j$ and assume $i < j$ (the case $i>j$ is similar). Let $\overline{c} \in \Lambda_P[2]$ be the mod $2$ reduction of $c_\gamma$. Then it is easy to see that $\overline{c}$ must have the form $\overline{c} = c_i + c_{i+1} + \dots + c_{j-1} + (1+t)a$ for some $a \in \Lambda_\Sigma[2]$. In the case $n=2$, we have that $\overline{\Delta}_P \subset \Lambda_P[2]$ is of special type. Then using \cite[Lemma 3.11]{jan1}, we see that $\overline{c} \in \overline{\Delta}_P$. By the same argument as in the $n > 2$ case, we have $c_\gamma = g(c_1)$ for some $g \in \Gamma_{SL}$. So $\langle c_\gamma , y \rangle = 1$ for $y = g(c_2)$. Then by Lemma \ref{lemdeltaz}, we have $c_\gamma \in \Delta_P$.
\end{proof}

The next theorem shows that the monodromy of the $GL(n,\mathbb{C})$ Hitchin fibration is determined by the vanishing lattice $(\Lambda_P , \langle \; , \; \rangle , \Delta_P)$ together with the extension $\Lambda_P \to \Lambda_S \to \Lambda_\Sigma$.

\begin{theorem}
The monodromy group $\Gamma_{GL}$ is the subgroup of $Aut(\Lambda_S)$ generated by transvections $T_{\alpha} : \Lambda_S \to \Lambda_S$, where $\alpha \in \Delta_P$.
\end{theorem}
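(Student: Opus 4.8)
The plan is to run the argument from the proof of Theorem~\ref{thmmain1} again, but now tracking the monodromy action on the larger lattice $\Lambda_S$ rather than on $\Lambda_P$. Write $\widetilde{\Gamma}_{\Delta_P}$ for the subgroup of $Aut(\Lambda_S)$ generated by the transvections $T_\alpha$ with $\alpha\in\Delta_P$; this is well-defined because $\Delta_P\subset\Lambda_P\subset\Lambda_S$ and the formula $T_\alpha(x)=x+\langle\alpha,x\rangle\alpha$ gives an automorphism of $\Lambda_S$ preserving $\langle\,,\,\rangle$ whose restriction to $\Lambda_P$ is the transvection appearing in the vanishing lattice. The goal is to prove $\Gamma_{GL}=\widetilde{\Gamma}_{\Delta_P}$ by establishing the two inclusions.

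For $\widetilde{\Gamma}_{\Delta_P}\subseteq\Gamma_{GL}$: since $\Delta_P=\Gamma_{\Delta_P}\cdot S_P$ and $\Gamma_{\Delta_P}$ is generated by $\{T_s\}_{s\in S_P}$, any $\alpha\in\Delta_P$ has the form $\alpha=w(s)$ for some $s\in S_P$ and a word $w$ in transvections by elements of $S_P$; applying the corresponding word $\widetilde w$ of transvections on $\Lambda_S$ gives $T_\alpha=\widetilde w\,T_s\,\widetilde w^{-1}$, so $\widetilde{\Gamma}_{\Delta_P}$ is already generated by $\{T_s\}_{s\in S_P}$. By Proposition~\ref{propallowedcycles}, each $s\in S_P$ is one of the vanishing cycles $t^j c_\gamma$ of a path $\gamma$ joining two branch points, and the proof of Theorem~\ref{thmallowedmonodromy} realizes the associated transvection by an explicit loop in $\mathcal{A}^{\rm reg}$ whose monodromy on $H^1(S,\mathbb{Z})=\Lambda_S$ is precisely the Picard--Lefschetz transformation $T_s$. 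Hence $T_s\in\Gamma_{GL}$ for all $s\in S_P$, giving the inclusion.

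For $\Gamma_{GL}\subseteq\widetilde{\Gamma}_{\Delta_P}$: as recalled at the start of Section~\ref{secmain}, the Kouvidakis--Pantev irreducibility of $\mathcal{D}$ together with the Zariski density of $\mathcal{D}^0$ imply that $\pi_1(\mathcal{A}^{\rm reg},a_0)$, hence $\Gamma_{GL}$, is generated by the monodromies $\rho_{GL}(\gamma_\delta)$ of meridians $\gamma_\delta$ around points $\delta\in\mathcal{D}^0$. Each such meridian has $\rho_{GL}(\gamma_\delta)=T_\alpha$ on $\Lambda_S$ for the geometric vanishing cycle $\alpha$, and $\alpha\in\Lambda_P$ because $\pi_*\circ\rho_{GL}(\gamma_\delta)=\pi_*$. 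Since $\rho_{SL}$ is just the restriction of $\rho_{GL}$ to the invariant sublattice $\Lambda_P$, this $\alpha$ is exactly the vanishing cycle treated in the proof of Theorem~\ref{thmmain1}, where it is shown --- via the conjugacy of meridians (Lemma~\ref{lemgenvan}), Janssen's characterization of $\Delta_P$ (Lemma~\ref{lemdeltaz}), and the $\mathbb{Z}_2$-classification of $(\Lambda_P,\langle\,,\,\rangle,\Delta_P)$ --- that $\alpha\in\Delta_P$. Therefore $\rho_{GL}(\gamma_\delta)=T_\alpha\in\widetilde{\Gamma}_{\Delta_P}$, and the inclusion follows.

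The genuinely new content, and the only place needing care, is the bookkeeping that makes this recycling of the $SL$ argument legitimate: that the geometric vanishing cycle of a meridian governing the $GL$-monodromy on $\Lambda_S$ is literally the same element $\alpha\in\Lambda_P$ handled in the proof of Theorem~\ref{thmmain1} (immediate once one observes that $\Lambda_P$ is $\rho_{GL}$-invariant and $\rho_{SL}=\rho_{GL}|_{\Lambda_P}$), and that passing from $S_P$ to its $\Gamma_{\Delta_P}$-orbit $\Delta_P$ does not enlarge the generated subgroup of $Aut(\Lambda_S)$. Neither point is difficult, so the proof itself will be short.
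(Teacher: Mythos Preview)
Your proof is correct and follows essentially the same route as the paper: both arguments sandwich $\Gamma_{GL}$ between the group generated by transvections in $S_P$ and the group generated by transvections in $\Delta_P$ (which coincide by the conjugation identity $T_{g(s)}=gT_sg^{-1}$, i.e.\ the argument of Proposition~\ref{propsgen}), use that the $T_s$ for $s\in S_P$ are realized geometrically as monodromies of loops in $\mathcal{A}^{\rm reg}$, and use the proof of Theorem~\ref{thmmain1} to show that every meridian vanishing cycle lies in $\Delta_P$. The only cosmetic difference is that you spell out the conjugation argument explicitly, whereas the paper simply cites Proposition~\ref{propsgen}.
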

\begin{proof}
By the same argument used for $\Gamma_{SL}$, we have that $\Gamma_{GL}$ is the subgroup of $Aut(\Lambda_S)$ generated by transvections in vanishing cycles associated to meridians around points in $\mathcal{D}^0$. Let $\Gamma_{GL,\Delta_P}$ be the subgroup of $Aut(\Lambda_S)$ generated by transvections in $\Delta_P$. Recall from Section \ref{secvanlat} the subset $S_P \subset \Delta_P$. We let $\Gamma_{GL,S_P}$ be the subgroup of $Aut(\Lambda_S)$ generated by transvections in $S_P$.\\

In the proof of Theorem \ref{thmmain1}, we established that if $\alpha$ is such a vanishing cycle, then $\alpha \in \Delta_P$. Hence $\Gamma_{GL} \subseteq \Gamma_{GL,\Delta_P}$. On the other hand we have that the elements of $S_P$ are vanishing cycles associated to meridians, so $\Gamma_{GL,S_P} \subseteq \Gamma_{GL}$. Arguing as in the proof of Proposition \ref{propsgen}, we see that the subgroup of $Aut(\Lambda_S)$ generated by transvections in $\Delta_P$ is also generated by transvections in $S_P$. Hence $\Gamma_{GL,S_P} = \Gamma_{GL} = \Gamma_{GL,\Delta_P}$. In particular, $\Gamma_{GL}$ is the subgroup of $Aut(\Lambda_S)$ generated by transvections in $\Delta_P$.
\end{proof}

\begin{proposition}
Let $\mathcal{VC} \subset \Lambda_P$ be the set of vanishing cycles associated to paths in $\Sigma$ joining pairs of branch points and let $\Gamma_{\mathcal{VC}}$ be the group generated by transvections by cycles in $\mathcal{VC}$. Then $\Delta_P = \Gamma_{\mathcal{VC}} \cdot \mathcal{VC}$ and $\Gamma_{\mathcal{VC}} = \Gamma_{\Delta_P} = \Gamma_{SL}$.
\end{proposition}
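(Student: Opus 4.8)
The plan is to exploit the two inclusions that are already in hand. On one side, Theorem~\ref{thmallowedmonodromy} shows that for every embedded path $\gamma$ joining a pair of branch points the transvections $T_{c_\gamma}, T_{tc_\gamma}, \dots, T_{t^{n-1}c_\gamma}$ all lie in $\Gamma_{SL}$; since $\mathcal{VC}$ consists exactly of such cycles, this gives $\Gamma_{\mathcal{VC}} \subseteq \Gamma_{SL}$. On the other side, the generating set $S_P$ from Section~\ref{secvanlat} is contained in $\mathcal{VC}$ by Proposition~\ref{propallowedcycles}, and $\Gamma_{\Delta_P}$ is by construction the group generated by $\{T_v\}_{v \in S_P}$, so $\Gamma_{\Delta_P} \subseteq \Gamma_{\mathcal{VC}}$. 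Feeding in Theorem~\ref{thmmain1} ($\Gamma_{SL} = \Gamma_{\Delta_P}$) we obtain the chain $\Gamma_{SL} = \Gamma_{\Delta_P} \subseteq \Gamma_{\mathcal{VC}} \subseteq \Gamma_{SL}$, hence $\Gamma_{\mathcal{VC}} = \Gamma_{\Delta_P} = \Gamma_{SL}$, which is the group-theoretic half of the statement.

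For the orbit statement, it suffices to prove the inclusion $\mathcal{VC} \subseteq \Delta_P$. Indeed, granting this: from $S_P \subseteq \mathcal{VC}$ and $\Delta_P = \Gamma_{\Delta_P}\cdot S_P$ we get $\Delta_P \subseteq \Gamma_{\mathcal{VC}}\cdot\mathcal{VC}$; conversely $\Gamma_{\mathcal{VC}}\cdot\mathcal{VC} = \Gamma_{\Delta_P}\cdot\mathcal{VC} \subseteq \Gamma_{\Delta_P}\cdot\Delta_P = \Delta_P$, using that $\Delta_P$ is a single $\Gamma_{\Delta_P}$-orbit and $\Gamma_{\mathcal{VC}} = \Gamma_{\Delta_P}$.

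To prove $\mathcal{VC}\subseteq\Delta_P$ I would re-run the argument of Theorem~\ref{thmmain1}. Fix $\alpha = t^jc_\gamma \in \mathcal{VC}$. The first step is to see that $\alpha$ is the vanishing cycle of a meridian around a point of $\mathcal{D}^0$: in the versal deformation $\mathcal{A}'$ of the $A_{n-1}$ singularity constructed in the proof of Theorem~\ref{thmallowedmonodromy}, the braid generators $\sigma_i$ are represented by loops around smooth branches of the local discriminant $D'$, whose generic points parametrise irreducible spectral curves with a single node, and the corresponding Picard--Lefschetz transformations are the Dehn twists about $c_\gamma, tc_\gamma, \dots, t^{n-1}c_\gamma$ (the last obtained from the cyclic symmetry). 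Lemma~\ref{lemgenvan} then says any two vanishing cycles of meridians around points of $\mathcal{D}^0$ differ by $\pm$ an element of $\Gamma_{SL}$; since $c_1$ is one such cycle and $(\Lambda_P,\langle\,,\,\rangle,\Delta_P)$ is a vanishing lattice there is $h\in\Gamma_{\Delta_P}$ with $hc_1 = -c_1$ \cite{jan1}, so after absorbing a sign we may write $\alpha = gc_1$ with $g\in\Gamma_{SL} = \Gamma_{\Delta_P}$. Now invoke Lemma~\ref{lemdeltaz} with $y = g(tc_1)\in\Lambda_P$: then $\langle\alpha,y\rangle = \langle c_1,tc_1\rangle = 1$ by Proposition~\ref{propints}, and the mod~$2$ reduction $\bar\alpha$ lies in $\bar\Delta_P$ by exactly the case analysis used in Theorem~\ref{thmmain1} --- in the orthogonal cases because $q(\bar\alpha) = q(\bar c_1) = 1$ and $\bar\alpha$ does not lie in the radical of the form, in the symplectic case because $\bar\Delta_P$ is the set of all vectors outside the radical, and for $n=2$ from the special-type analysis carried out there for arbitrary $c_\gamma$. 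Lemma~\ref{lemdeltaz} then gives $\alpha\in\Delta_P$, as required.

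The step I expect to demand the most care is the identification, in the previous paragraph, of each $t^jc_\gamma$ --- for an \emph{arbitrary} path $\gamma$, not just the special ones defining $S_P$ --- as the vanishing cycle of a genuine meridian around a point of $\mathcal{D}^0$, rather than merely as a Dehn-twist generator of the local $A_{n-1}$-monodromy, so that Lemma~\ref{lemgenvan} genuinely applies. Concretely this means checking that a small loop around a smooth point of a branch of $D'$ is conjugate in $\pi_1(\mathcal{A}^{\rm reg},a_0)$ to a meridian around a point of $\mathcal{D}^0$, which rests on the generic spectral curve in such a branch being irreducible with a single node together with the irreducibility of $\mathcal{D}$ established by Kouvidakis and Pantev. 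Once this is secured, the rest is a routine repetition of Section~\ref{secmain}.
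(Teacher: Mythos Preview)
Your proposal is correct and follows essentially the same approach as the paper: the chain $\Gamma_{\Delta_P}\subseteq\Gamma_{\mathcal{VC}}\subseteq\Gamma_{SL}=\Gamma_{\Delta_P}$ via $S_P\subseteq\mathcal{VC}$ and Theorems~\ref{thmallowedmonodromy}, \ref{thmmain1}, together with the identification of each $t^jc_\gamma$ as the vanishing cycle of a meridian so that Lemma~\ref{lemgenvan} yields $\alpha=\pm gc_1$ with $g\in\Gamma_{SL}$. The only difference is that once you have $\alpha=gc_1$ with $g\in\Gamma_{SL}=\Gamma_{\Delta_P}$, the paper concludes $\alpha\in\Delta_P$ immediately from $c_1\in\Delta_P$ and the $\Gamma_{\Delta_P}$-invariance of $\Delta_P$, whereas you re-run the mod~$2$ case analysis and Lemma~\ref{lemdeltaz}---this is correct but redundant at this stage.
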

\begin{proof}
The elements of $S_P$ are all vanishing cycles associated to certain paths joining branch points, so $S_P \subseteq \mathcal{VC}$ and $\Gamma_{S_P} \subseteq \Gamma_{\mathcal{VC}}$, where $\Gamma_{S_P}$ is the group generated by transvections in $S_P$. By Proposition \ref{propsgen}, we have $\Gamma_{S_P} = \Gamma_{\Delta_P}$ and $\Gamma_{\Delta_P} = \Gamma_{SL}$ by Theorem \ref{thmmain1}, so $\Gamma_{SL} \subseteq \Gamma_{\mathcal{VC}}$. On the other hand $\Gamma_{\mathcal{VC}} \subseteq \Gamma_{SL}$, by Theorem \ref{thmallowedmonodromy}. So $\Gamma_{\mathcal{VC}} = \Gamma_{SL}$. It remains to show that $\mathcal{VC} \subseteq \Delta_P$. Let $\alpha \in \mathcal{VC}$. Then as $T_\alpha$ is the monodromy around a meridian, arguing along the same lines as in the proofs of Lemma \ref{lemgenvan} and Theorem \ref{thmmain1} gives $\alpha = g c_1$ for some $g \in \Gamma_{SL}$. But $g c_1 \in \Delta_P$, so we have shown that $\mathcal{VC} \subseteq \Delta_P$ and hence $\Gamma_{\mathcal{VC}} \cdot \mathcal{VC} \subseteq \Gamma_{SL} \cdot \Delta_P = \Delta_P$. However, $\Delta_P$ is an orbit of $\Gamma_{SL} = \Gamma_{\mathcal{VC}}$, so we must have $\Gamma_{\mathcal{VC}} \cdot \mathcal{VC} = \Delta_P$.
\end{proof}

%%%%%%%%%%%%%%%%%%%%%%%%%%%%%%%%%%%%%%%%%%%%
%%%%%%%%%%%%%%%%%%%%%%%%%%%%%%%%%%%%%%%%%%%%
% SECTION
%%%%%%%%%%%%%%%%%%%%%%%%%%%%%%%%%%%%%%%%%%%%
%%%%%%%%%%%%%%%%%%%%%%%%%%%%%%%%%%%%%%%%%%%%
\section{Application to the topology of Higgs bundle moduli spaces}\label{secappl}

Let $\mathcal{M}(n,d,L)$ denote the moduli space of rank $n$ $L$-twisted Higgs bundles with trace-free Higgs field and determinant equal to a fixed line bundle $D$ of degree $d$. Up to isomorphism, $\mathcal{M}(n,d,L)$ depends on $D$ only through the degree $d$. We can define the Hitchin fibration $h : \mathcal{M}(n,d,L) \to \mathcal{A}$ and one finds that for any value of $d$, the moduli space $\mathcal{M}(n,d,L)$ is a torsor for the family of Prym varieties $p : Prym(\mathcal{S}/\mathcal{A}^{\rm reg}) \to \mathcal{A}^{\rm reg}$ as defined in Section \ref{sec2}. In particular, this implies that the monodromy local system of $\mathcal{M}(n,d,L)$ is $\underline{\Lambda}$, independent of $d$. Let $a \in \mathcal{A}^{\rm reg}$ and let $F_a = h^{-1}(a)$ be the corresponding non-singular fibre of the Hitchin fibration. Our goal this section will be to prove the following:
\begin{theorem}\label{thmrestriction}
Let $\omega \in H^2( F_a , \mathbb{Q} )$ be the cohomology class of the polarization on $F_a$. The image
\begin{equation*}
Im( H^*( \mathcal{M}(n,d,L) , \mathbb{Q} )) \to H^*( F_a , \mathbb{Q} ) )
\end{equation*}
of the restriction map in cohomology is the subspace spanned by $1 , \omega , \omega^2 , \dots , \omega^u$, where $u = dim_{\mathbb{C}}(F_a)$ is the dimension of the fibre.
\end{theorem}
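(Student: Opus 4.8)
The plan is to trap the image of the restriction map between an elementary upper bound coming from monodromy invariance and a lower bound coming from a relatively ample class on $\mathcal{M}(n,d,L)$, and then to identify the space of invariants using the classification of $\Gamma_{SL}$ established above. First I would set up notation: write $H=H^1(F_a,\mathbb{Q})$, so that $H^*(F_a,\mathbb{Q})=\wedge^\bullet H$ since $F_a$ is a torsor over the abelian variety $Prym(S_a,\Sigma)$; via the polarization I identify $H$ with $\Lambda_P\otimes\mathbb{Q}$ as a symplectic $\Gamma_{SL}$-module (the monodromy group being $\Gamma_{SL}$ for every value of $d$). The class $\omega\in\wedge^2H=H^2(F_a,\mathbb{Q})$ is the symplectic form of the polarization; it is $\Gamma_{SL}$-invariant, as is each $\omega^j$ for $0\le j\le u=dim_{\mathbb{C}}(F_a)=\mathrm{rk}(\Lambda_P)/2$.

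For the upper bound, I would note that the restriction map factors as $H^*(\mathcal{M}(n,d,L),\mathbb{Q})\to H^*(\mathcal{M}^{\rm reg}(n,d,L),\mathbb{Q})\to H^*(F_a,\mathbb{Q})$, where $\mathcal{M}^{\rm reg}(n,d,L)=h^{-1}(\mathcal{A}^{\rm reg})$. The second map factors through the Leray edge homomorphism $H^*(\mathcal{M}^{\rm reg}(n,d,L),\mathbb{Q})\to H^0(\mathcal{A}^{\rm reg},R^\bullet h_*\mathbb{Q})$ followed by evaluation at $a$, and for the local system $R^q h_*\mathbb{Q}$ on $\mathcal{A}^{\rm reg}$ the group $H^0(\mathcal{A}^{\rm reg},R^q h_*\mathbb{Q})$ maps isomorphically onto the monodromy invariants $(\wedge^q H)^{\Gamma_{SL}}\subseteq\wedge^q H=H^q(F_a)$. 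Hence $Im\big(H^*(\mathcal{M}(n,d,L),\mathbb{Q})\to H^*(F_a,\mathbb{Q})\big)\subseteq(\wedge^\bullet H)^{\Gamma_{SL}}$; no properness or smoothness hypothesis is needed for this inclusion.

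The heart of the argument, which I would carry out next, is the claim that $(\wedge^{2j}H)^{\Gamma_{SL}}=\mathbb{Q}\,\omega^j$ for $0\le j\le u$, while $(\wedge^mH)^{\Gamma_{SL}}=0$ for $m$ odd (and $\wedge^mH=0$ for $m>2u$). Since $\Gamma_{SL}=\Gamma_{\Delta_P}$ is generated by the symplectic transvections $T_\alpha$, $\alpha\in\Delta_P$, each of which is unipotent with square-zero logarithm $\phi_\alpha\colon x\mapsto\langle\alpha,x\rangle\alpha$, the Lie algebra of the $\mathbb{Q}$-Zariski closure $G=\overline{\Gamma_{SL}}\subseteq Sp(H)$ contains all the $\phi_\alpha$; under the isomorphism $\mathfrak{sp}(H)\cong S^2H$ these correspond to the symmetric squares $\alpha\cdot\alpha$. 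I would then check, case by case along the classification of the vanishing lattice $(\Lambda_P,\langle\,,\,\rangle,\Delta_P)$, that the $\{\alpha\cdot\alpha:\alpha\in\Delta_P\}$ span $S^2(\Lambda_P\otimes\mathbb{Q})$: it suffices to exhibit inside $\Delta_P$ a $\mathbb{Q}$-basis of $\Lambda_P$ together with sufficiently many of its pairwise sums, which the spanning set $S_P$ and the identity $\alpha+\beta=T_\beta(\alpha)$ (valid when $\langle\alpha,\beta\rangle=1$) supply. It follows that $\mathrm{Lie}(G)=\mathfrak{sp}(H)$, so $G=Sp(H)$, and the claim is then the first fundamental theorem of invariant theory for the symplectic group, which says $(\wedge^\bullet W)^{Sp(W)}$ is the subalgebra generated by the symplectic form.

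Finally, for the lower bound: $\mathcal{M}(n,d,L)$ is quasi-projective, so it carries an ample line bundle; let $\eta\in H^2(\mathcal{M}(n,d,L),\mathbb{Q})$ be its first Chern class. As $F_a$ is a closed subvariety of positive dimension (a fibre of the proper map $h$), the restriction $\eta|_{F_a}$ is the first Chern class of an ample line bundle on $F_a$, hence nonzero, and by the upper bound it lies in $(\wedge^2H)^{\Gamma_{SL}}=\mathbb{Q}\,\omega$, so $\eta|_{F_a}=c\,\omega$ with $c\ne0$. Then $\eta^j|_{F_a}=c^j\omega^j$, and since $\omega^j\ne0$ for $0\le j\le u$, every $\omega^j$ lies in the image; combining this with the upper bound and the computation of the invariants gives $Im\big(H^*(\mathcal{M}(n,d,L),\mathbb{Q})\to H^*(F_a,\mathbb{Q})\big)=\langle1,\omega,\dots,\omega^u\rangle$. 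The step I expect to be the main obstacle is the Zariski-density assertion $G=Sp(H)$ --- in particular verifying, in the orthogonal cases and in the special case $n=2$ (type $A'$), that the mod-$2$ conditions defining $\Delta_P$ do not force $G$ to be a proper subgroup of $Sp(H)$; once the classification of $(\Lambda_P,\langle\,,\,\rangle,\Delta_P)$ is in hand, everything else is essentially formal.
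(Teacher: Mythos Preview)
Your proof is correct, and the overall architecture (monodromy-invariance upper bound, quasi-projectivity lower bound) matches the paper exactly. The genuine difference lies in how the invariants $(\wedge^\bullet H)^{\Gamma_{SL}}$ are computed.

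The paper does \emph{not} pass to the rational Zariski closure. Instead it works arithmetically: for each odd prime $p\nmid n$ it shows, via a result of Brown--Humphries on groups generated by transvections, that the reduction $\Gamma_{SL}\to Sp(\Lambda_P\otimes\mathbb{Z}_p)$ is surjective; it then proves directly (Lemma~\ref{leminvs}) that $(\wedge^\bullet V)^{Sp(V,\mathbb{Z}_p)}$ is spanned by the powers of the symplectic form, and finally uses infinitely many primes to conclude over $\mathbb{Q}$. Your route---show that the logarithms $\phi_\alpha\in\mathfrak{sp}(H)$ of the unipotent transvections span $\mathfrak{sp}(H)\cong S^2H$ as a Lie algebra, deduce that the Zariski closure of $\Gamma_{SL}$ is all of $Sp(H)$, and invoke the classical description of $(\wedge^\bullet H)^{Sp(H)}$---is more direct and avoids the external input from Brown--Humphries. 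Two remarks on your execution: first, ``sufficiently many pairwise sums'' is doing real work and deserves to be spelled out; the cleanest way is to observe that from $\alpha^2,\beta^2\in\mathrm{Lie}(G)$ one gets $[\alpha^2,\beta^2]=c\langle\alpha,\beta\rangle\,\alpha\beta$, and then $[\alpha\beta,\gamma^2]$ propagates along the connected intersection graph of $S_P$ to reach every $e_ie_j$. Second, your stated worry about the mod-$2$ constraints in the orthogonal and $A'$ cases is unfounded: those constraints live in $\Lambda_P\otimes\mathbb{Z}_2$ and impose no algebraic condition on the $\mathbb{Q}$-Zariski closure, so no case analysis along the classification is actually needed---the argument is uniform in $n$. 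What the paper's approach buys in exchange for the extra input is a sharper statement (surjectivity onto $Sp$ over $\mathbb{Z}_p$ for almost all $p$), which may be of independent interest.
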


To prove this theorem we need to use a result concerning symplectic vector spaces over finite fields of prime order. Let $p$ be an odd prime, $V$ a vector space over $\mathbb{Z}_p$ of dimension $2v$ and $\langle \; , \; \rangle$ a non-degenerate alternating bilinear form over $V$. Given a symplectic basis $\{ e_1 , f_1, e_2 , f_2 , \dots , e_v , f_v \}$ and $m \in \{0,1,2, \dots , v\}$, let $\alpha_{2m} \in \wedge^{2m} V$ be given by
\begin{equation}\label{equalpha}
\alpha_{2m} = \sum_{ i_1 < i_2 < \dots < i_m } (e_{i_1} \wedge f_{i_1} ) \wedge \dots \wedge (e_{i_m} \wedge f_{i_m} ). 
\end{equation}
It can be shown that $\alpha_{2m}$ is independent of the choice of symplectic basis and is invariant under the group $Sp( V , \mathbb{Z}_p)$ of symplectic transformations of $V$ \cite{de}.
\begin{lemma}\label{leminvs}
The subspace of $\wedge^* V$ invariant under $Sp(V , \mathbb{Z}_p)$ is spanned by $1 , \alpha_2 , \alpha_4 , \dots , \alpha_{2v}$.
\end{lemma}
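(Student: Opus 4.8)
The goal is to identify the $Sp(V,\mathbb{Z}_p)$-invariant subspace of the exterior algebra $\wedge^* V$ with the span of $1,\alpha_2,\ldots,\alpha_{2v}$. Since the invariants of $\alpha_{2m}$ are already known (one inclusion is free), the real content is showing there are no \emph{other} invariants. The plan is to compute, for each $j$, the dimension of the space of $Sp(V,\mathbb{Z}_p)$-invariants in $\wedge^j V$ and show it is $1$ when $j$ is even, $0$ when $j$ is odd. My preferred route is representation-theoretic: I would invoke the decomposition of $\wedge^j V$ into irreducible $Sp(V,\mathbb{Z}_p)$-modules (or, what is cheaper and avoids the modular subtleties of $Sp(V,\mathbb{Z}_p)$ in characteristic $p$, the analogous fact over $\mathbb{C}$ or $\mathbb{Q}$ for the algebraic group $Sp_{2v}$, using that the relevant characters and multiplicities are controlled by classical invariant theory and a density/Zariski-closure argument to transfer back to the finite group). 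The classical fact is that $\wedge^j V$ for the symplectic group decomposes, via contraction with the symplectic form $\omega \in \wedge^2 V^*$, as $\bigoplus_{k\ge 0} V^{\langle j-2k\rangle}$ where $V^{\langle i \rangle}$ is the irreducible "primitive" (fundamental) representation; the trivial summand appears exactly once, precisely when $j$ is even (coming from the top contraction $\omega^{j/2}$), and $\alpha_j$ is — up to scalar — the image of $1$ under the dual map $\wedge^0 V \to \wedge^j V$ given by wedging with the bivector dual to $\omega$. This pins down both the dimension count and the explicit generator.

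First I would fix the non-degenerate form $\langle\,,\,\rangle$ on $V$ and let $\pi \in \wedge^2 V$ be the associated Poisson bivector, i.e. $\pi = \sum_{i=1}^v e_i \wedge f_i$ in any symplectic basis; standard Lefschetz-type $\mathfrak{sl}_2$-theory on $\wedge^* V$ (with raising operator $L(\xi) = \pi \wedge \xi$, lowering operator the contraction with $\langle\,,\,\rangle$, and the grading operator) gives the primitive decomposition and shows the invariants of $L$-weight zero in $\wedge^j V$ form a one-dimensional space spanned by $L^{j/2}(1) = \pi^{\wedge j/2}/(j/2)!$-type expressions when $j$ is even, and is zero when $j$ is odd. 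One checks directly that $L^m(1)$ expands to $m!\,\alpha_{2m}$ (the $m!$ accounting for orderings of the $m$ chosen pairs), so $\alpha_{2m}$ spans this line. The key remaining point is that \emph{every} $Sp(V,\mathbb{Z}_p)$-invariant in $\wedge^j V$ is automatically $L$-weight zero and primitive-free in the required sense; over an algebraically closed field of characteristic zero this is immediate from complete reducibility, and for $\mathbb{Z}_p$ with $p$ odd one argues that the non-trivial fundamental pieces $V^{\langle i\rangle}$, $i>0$, have no $Sp(V,\mathbb{Z}_p)$-fixed vectors — e.g. by exhibiting explicit symplectic transvections (which generate $Sp(V,\mathbb{Z}_p)$ by a theorem recalled implicitly in the vanishing-lattice setup) whose only common fixed vectors in $\wedge^j V$ are multiples of $\alpha_j$.

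Concretely, the cleanest elementary argument I would actually write down avoids heavy representation theory: take an arbitrary invariant $\beta \in \wedge^j V$, expand it in the basis of wedges of the $e_i, f_i$, and use invariance under (i) the symplectic transvections $T_{e_i}$, $T_{f_i}$ and $T_{e_i+f_j}$, and (ii) the "coordinate swap" symmetries permuting the hyperbolic planes $\langle e_i,f_i\rangle$ and the $Sp_2$-action within each plane, to force all coefficients to be equal and to kill every basis monomial that is not a wedge of full pairs $e_i\wedge f_i$. Averaging / orbit arguments under these generators successively eliminate monomials containing an unpaired $e_i$ or $f_i$; what survives is supported on the $\binom{v}{m}$ monomials $\prod_{s}(e_{i_s}\wedge f_{i_s})$ with a common coefficient, i.e. a multiple of $\alpha_{2m}$, and $j=2m$ must be even. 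The main obstacle — and the step I would spend the most care on — is precisely this combinatorial elimination in characteristic $p$: one must make sure the averaging operators are not identically zero mod $p$ (this is where $p$ odd, and more precisely $p \nmid$ the relevant multinomial factors, enters), and handle the interaction between transvection invariance and the $Sp_2$-in-each-plane invariance without circularity. Once that bookkeeping is done, Lemma~\ref{leminvs} follows, and in passing one re-derives that $\alpha_{2m}$ is basis-independent and $Sp$-invariant, consistent with \cite{de}.
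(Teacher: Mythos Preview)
Your second, ``elementary'' route is on the right track and is close in spirit to what the paper actually does, but the paper organises it much more cleanly as an induction on $v$ and, crucially, never averages over any group. For each index $i$ one writes an invariant $\lambda \in \wedge^k V$ uniquely as
\[
\lambda = \alpha_0 + e_i \wedge \alpha_1 + f_i \wedge \alpha_2 + e_i \wedge f_i \wedge \alpha_3, \qquad \alpha_s \in \wedge^* V_i,
\]
where $V_i = \mathrm{span}(B \setminus \{e_i,f_i\})$. The single equation $T_{e_i}\lambda = \lambda$ gives $e_i \wedge \alpha_2 = 0$, hence $\alpha_2 = 0$; likewise $T_{f_i}\lambda = \lambda$ forces $\alpha_1 = 0$. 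No division, no averaging, no characteristic--$p$ hazard. Then $\alpha_0 = \pi_i(\lambda)$ is $Sp(V_i,\mathbb{Z}_p)$-invariant (since $Sp(V_i)$ sits inside $Sp(V)$ fixing $e_i,f_i$), and one invokes the inductive hypothesis on the $(2v-2)$-dimensional space $V_i$. Running this over all $i$ pins $\lambda$ down to a multiple of $\alpha_{2m}$ (or to $0$ when $k$ is odd). So the ``main obstacle'' you flagged --- making sure averaging operators do not vanish mod $p$ --- simply does not arise if you argue this way; the transvection equations are linear and used directly, not through projectors.

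Your first route, via the Lefschetz $\mathfrak{sl}_2$ and the primitive decomposition of $\wedge^* V$, is genuinely problematic in characteristic $p$ and is best abandoned here. The $\mathfrak{sl}_2$-action on $\wedge^* V$ need not be semisimple over $\mathbb{Z}_p$, and the hard Lefschetz isomorphisms $L^m : \wedge^{v-m}V \to \wedge^{v+m}V$ involve factorials that can vanish mod $p$; so one cannot simply read off that the only invariants are the powers of $\pi$ without re-proving, by other means, exactly the statement you are after. Your proposed fix --- showing the nontrivial fundamental pieces $V^{\langle i\rangle}$ have no $Sp(V,\mathbb{Z}_p)$-fixed vectors --- is not easier than the lemma itself. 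The inductive argument above is both shorter and characteristic-free (for $p$ odd).
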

\begin{proof}
We use induction on the dimension $2v$ of $V$. Assume the result holds in dimension $2v-2$. Choose a symplectic basis $B = \{e_1,f_1 , \dots , e_v , f_v\}$ for $V$. For $i = 1,2, \dots , v$, let $V_i$ be the subspace spanned by $B \setminus \{ e_i , f_i \}$, let $\iota_i : V_i \to V$ be the inclusion and $\pi_i : V \to V_i$ the projection with kernel spanned by $e_i,f_i$.\\

Let $\lambda \in \wedge^k V$ be invariant. Consider first the case where $k$ is odd. For any $i$ we have that $\pi_i(\lambda)$ is invariant under $Sp( V_i , \mathbb{Z}_p)$, so $\pi_i(\lambda) = 0$ by induction. It follows that $\lambda = e_i \wedge \alpha + f_i \wedge \beta + e_i \wedge f_i \wedge \gamma$, where $\alpha,\beta,\gamma$ are in the image of $\iota_i : \wedge^* V_i \to \wedge^* V$. But $\lambda$ is invariant under the transvections $T_{e_i},T_{f_i}$ and it follows easily that $\alpha = \beta = 0$, so that $\lambda$ is a multiple of $e_i \wedge f_i$. Since $i$ was arbitrary, we have that $\lambda$ is a multiple of $e_1\wedge f_1 \wedge \dots \wedge e_v \wedge f_v$. But $\lambda$ has odd degree so this can only happen if $\lambda = 0$.\\

Consider the case where $k = 2m$ is even. Then for each $i$, we have that $\pi_i( \lambda)$ is invariant under $Sp(V_i , \mathbb{Z}_p)$, so by induction we have $\pi_i(\lambda) = c_i \pi_i(\alpha_{2m})$ for some $c_i \in \mathbb{Z}_p$. Clearly this can only happen if $c_1 = c_2 = \dots = c_m$ and thus $\pi_i( \lambda -c_1 \alpha_{2m}) = 0$ for all $i$. By the same argument as used in the case where $k$ is odd, we see that either $\lambda - c_1 \alpha_{2m} = 0$, or $m = v$ and $\lambda-c_1 \alpha_{2v}$ is a multiple of $e_1 \wedge f_1 \wedge  \dots \wedge e_v \wedge f_v = \alpha_{2v}$. In either case $\lambda$ is a multiple of $\alpha_{2m}$.
\end{proof}

\begin{lemma}\label{leminvcohom}
The space $H^*( F_a , \mathbb{Q} )^{\rho_{SL}}$ of monodromy invariant rational cohomology classes on $F_a$ is spanned by $1,\omega , \omega^2 , \dots , \omega^u$.
\end{lemma}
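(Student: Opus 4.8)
The plan is to reduce the statement to invariant theory of the symplectic group. Since $F_a$ is a torsor over the abelian variety $Prym(S_a,\Sigma)$ it is a complex torus, so $H^{*}(F_a,\mathbb{Q})\cong\wedge^{*}H^{1}(F_a,\mathbb{Q})$ as $\Gamma_{SL}$-modules, and via the polarization (which identifies $\Lambda_P\otimes\mathbb{Q}$ with its dual $\Gamma_{SL}$-equivariantly) the $\Gamma_{SL}$-representation $H^{1}(F_a,\mathbb{Q})$ is isomorphic to $V:=\Lambda_P\otimes\mathbb{Q}$. Under this identification $\omega$ becomes the class of the symplectic form in $\wedge^{2}V$, and for $0\le m\le u$ the power $\omega^{m}=m!\,\alpha_{2m}$ is a nonzero rational multiple of the canonical class $\alpha_{2m}$ of \eqref{equalpha}. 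By \cite{de} (cf. Lemma \ref{leminvs}) each $\alpha_{2m}$, hence each $\omega^{m}$, is $\Gamma_{SL}$-invariant, so $1,\omega,\dots,\omega^{u}$ are $u+1$ linearly independent invariants; it remains to prove the matching upper bound, namely that $\dim_{\mathbb{Q}}(\wedge^{k}V)^{\Gamma_{SL}}\le 1$, with equality only when $k$ is even and $\le 2u$.

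The crux is that $\Gamma_{SL}$ is ``as large as possible'' inside $Sp(V)$. First, $\Gamma_{SL}=\Gamma_{\Delta_P}$ acts \emph{irreducibly} on $V$: if $0\neq U\subsetneq V$ were $\Gamma_{SL}$-stable, then for each $a\in S_P$ the identity $T_a U=U$ forces either $a\in U$ or $U\subseteq a^{\perp}$. Since $S_P$ spans $\Lambda_P$ and $\langle\;,\;\rangle$ is non-degenerate, the set $\{a\in S_P:a\in U\}$ is neither empty nor all of $S_P$, so by the connectedness of the intersection graph of $S_P$ (Proposition \ref{propsgen}(ii), which $S_P$ satisfies) there exist $a\in U\cap S_P$ and $b\in S_P\setminus U$ with $\langle a,b\rangle=\pm1$; but $b\notin U$ gives $U\subseteq b^{\perp}$, contradicting $a\in U$ and $\langle a,b\rangle\neq 0$. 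Next, $\Gamma_{SL}$ is finitely generated (it is generated by the finitely many transvections $T_a$, $a\in S_P$), is contained in $Aut(\Lambda_P,\langle\;,\;\rangle)$, and contains transvections. A Zariski-closed subgroup $G$ of $Sp(V\otimes\mathbb{C})$ acting irreducibly and containing a transvection must equal $Sp(V\otimes\mathbb{C})$ — its unipotent radical is normal, so has nonzero and $G$-stable, hence full, fixed space, hence is trivial by faithfulness, so $G$ is reductive and contains a long-root $SL_2$ acting irreducibly on the standard module, which forces $G=Sp$. Therefore the Zariski closure of $\Gamma_{SL}$ is all of $Sp(V\otimes\mathbb{C})$, and likewise over $\mathbb{Q}$.

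Now I would finish by reducing modulo a prime. Choose an odd prime $p$ with $p\nmid n$, so that $\langle\;,\;\rangle$ is non-degenerate on $\Lambda_P\otimes\mathbb{F}_p$, making it a symplectic $\mathbb{F}_p$-space of dimension $2u$. By strong approximation for the simply connected $\mathbb{Q}$-group $Sp_{2u}$, applied to the finitely generated, Zariski-dense subgroup $\Gamma_{SL}$, for all but finitely many such $p$ the reduction map carries $\Gamma_{SL}$ onto $Sp(\Lambda_P\otimes\mathbb{F}_p)$. For such a $p$, Lemma \ref{leminvs} shows that $(\wedge^{k}(\Lambda_P\otimes\mathbb{F}_p))^{\Gamma_{SL}}$ is spanned by the mod-$p$ reduction of $\alpha_k$, so has dimension $\le 1$ and vanishes unless $k$ is even with $k\le 2u$. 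Finally, saturating the invariant sublattice $(\wedge^{k}\Lambda_P)^{\Gamma_{SL}}\subseteq\wedge^{k}\Lambda_P$ to a direct summand $N$ (which still consists of invariants) and reducing mod $p$ gives the elementary inequality $\dim_{\mathbb{Q}}(\wedge^{k}V)^{\Gamma_{SL}}=\operatorname{rk}N\le\dim_{\mathbb{F}_p}(\wedge^{k}(\Lambda_P\otimes\mathbb{F}_p))^{\Gamma_{SL}}$, which yields the required upper bound and completes the proof.

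The main obstacle is precisely this ``largeness'' input: in the orthogonal and special cases $\Gamma_{SL}$ is a proper subgroup of $Aut(\Lambda_P,\langle\;,\;\rangle)$, yet one needs it to be Zariski-dense in $Sp$, and that is exactly what irreducibility together with the presence of transvections provides. An alternative to strong approximation is to invoke the classical description of the $Sp(2u,\mathbb{C})$-invariants in $\wedge^{*}\mathbb{C}^{2u}$ directly (the characteristic-zero analogue of Lemma \ref{leminvs}), which again rests on the same Zariski-density statement; I have presented the mod-$p$ route because it uses Lemma \ref{leminvs} in the form stated.
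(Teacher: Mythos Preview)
Your overall architecture matches the paper's: identify $H^*(F_a,\mathbb{Q})$ with $\wedge^*(\Lambda_P\otimes\mathbb{Q})$, reduce modulo a prime $p\nmid 2n$, show that $\Gamma_{SL}$ surjects onto $Sp(\Lambda_P\otimes\mathbb{F}_p)$, invoke Lemma~\ref{leminvs}, and then pass back to $\mathbb{Q}$. The genuine difference is in how the surjectivity onto $Sp(\mathbb{F}_p)$ is obtained. The paper cites the Brown--Humphries theorem \cite{bh1} directly: since $\Gamma_{SL}$ is generated by the transvections $T_a$ with $a$ running over the connected spanning set $S_P$, its mod~$p$ image is all of $Sp(\Lambda_P\otimes\mathbb{F}_p)$ for every odd $p\nmid n$. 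You instead first prove irreducibility of $\Gamma_{SL}$ on $\Lambda_P\otimes\mathbb{Q}$ (your argument here is clean and correct), deduce Zariski density in $Sp$, and then appeal to strong approximation (Nori/Matthews--Vaserstein--Weisfeiler) to get mod~$p$ surjectivity for almost all $p$. Both routes work; yours is more conceptual but imports heavier machinery, while the paper's is a one-line citation that moreover gives surjectivity for \emph{all} relevant $p$, not just almost all. Your final lifting step (saturate the invariant sublattice and compare ranks) is equivalent to the paper's ``divisible by infinitely many primes'' argument.

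One point deserves tightening: the sentence ``$G$ is reductive and contains a long-root $SL_2$ acting irreducibly on the standard module, which forces $G=Sp$'' is doing real work that you have not justified. A transvection gives you only a $\mathbb{G}_a$ inside $G^0$; to get an $SL_2$ you need Jacobson--Morozov, and to pass from ``connected reductive, irreducible on $V$, contains a long-root unipotent'' to ``equals $Sp(V)$'' you need either the classification of such subgroups or an explicit Lie-algebra argument (e.g.\ the $Ad(G)$-orbit of $a\otimes a$ inside $\mathfrak{sp}(V)\cong Sym^2 V$ generates everything because the $G$-orbit of $a$ spans $V$). You should also check that $G^0$, not just $G$, acts irreducibly, since the transvection lies in $G^0$. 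The conclusion is correct and standard in Picard--Lefschetz theory, but as written the step is asserted rather than proved.
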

\begin{proof}
Suppose that $\mu \in H^k( F_a , \mathbb{Q} )^{\rho_{SL}}$ is a monodromy invariant cohomology class on $F_a$. Multiplying $\mu$ by a sufficiently large positive integer, it suffices to assume that $\mu \in H^k( F_a , \mathbb{Z} )^{\rho_{SL}}$. Let $p$ be an odd prime not dividing $n$ and let $V = \Lambda_P^* \otimes_{\mathbb{Z}} \mathbb{Z}_p$. Then since $H^k( F_a , \mathbb{Z}) = \wedge^k \Lambda_P^*$, we have $H^k( F_a ,\mathbb{Z}_p) = \wedge^k V$. By Corollary \ref{corpolarization}, the polarization type of $\langle \; , \; \rangle$ on $\Lambda_P$ is $(1,1 , \dots , 1 , n , n , \dots , n)$, where there are $g$ copies of $n$. It follows that the mod $p$ reduction of $\langle \; , \; \rangle$ is a non-degenerate alternating bilinear form on $\Lambda_P \otimes_{\mathbb{Z}} \mathbb{Z}_p$ and so by duality induces a non-degenerate alternating bilinear form $\langle \; , \; \rangle$ on $V$ preserved by the monodromy action. Reduction mod $p$ thus induces a homomorphism $\phi : \Gamma_{SL} \to Sp( V , \mathbb{Z}_p)$ and the reduction of $\mu$ mod $p$ gives an element $\mu_p \in \wedge^k V$ invariant under $\phi(\Gamma_{SL})$. Since $p$ is odd, it follows from \cite[Theorem 2.7 and Theorem 6.5]{bh1} and the fact that $\Gamma_{SL}$ is generated by transvections in the set $S_P$ given in Section \ref{secvanlat}, that $\phi$ is actually surjective. Then by Lemma \ref{leminvs}, we have that $\mu_p = 0$ if $k$ is odd and $\mu_p$ is a multiple of $\alpha_{2m}$ if $k = 2m$ is even.\\

Suppose first that $k$ is odd. Then $\mu$ is divisible by infinitely many primes, hence $\mu = 0$. Now suppose that $k = 2m$ is even. Then for every odd prime $p$ not dividing $n$ we have that the mod $p$ reduction of $\mu$ is a multiple of $\alpha_{2m}^{(p)}$ (here we use a superscript $p$ to remind us that $\alpha_{2m} = \alpha_{2m}^{(p)}$ depends on $p$). Suppose also that $p > m$. Then from Equation (\ref{equalpha}), we have that $\alpha_{2m}^{(p)}$ is the mod $p$ reduction of $\omega^m/m!$, where $\omega \in \wedge^2 \Lambda_P^*$ is the alternating form $\langle \; , \; \rangle$ thought of as a $2$-form. Let $\tau \in \wedge^{2m} \Lambda_P^*$ be a primitive vector such that $\omega^m/m!$ is an integral multiple of $\tau$. The mod $p$ reduction of $\mu$ is a multiple of the mod $p$ reduction of $\tau$ for infinitely many primes $p$. It follows that $\mu$ is in the $\mathbb{Z}$-span of $\tau$ and thus $\mu$ is a rational multiple of $\omega^m$ in $\wedge^{2m} \Lambda_P^* \otimes_{\mathbb{Z}} \mathbb{Q}$.
\end{proof}

\begin{proof}[Proof of Theorem \ref{thmrestriction}]
As the restriction map $r : H^*(\mathcal{M}(n,d,L) , \mathbb{Q} ) \to H^*( F_a , \mathbb{Q} )$ factors through $\mathcal{M}^{\rm reg}(n,d,L)$, it is clear that the image of $r$ is contained in $H^*( F_a , \mathbb{Q} )^{\rho_{SL}}$, the subgroup of monodromy invariants. By Lemma \ref{leminvcohom}, $H^*( F_a , \mathbb{Q} )^{\rho_{SL}}$ is spanned by $1,\omega , \dots , \omega^u$. On the other hand since $\mathcal{M}(n,d,L)$ is quasi-projective, there exists a class $\alpha \in H^2( \mathcal{M}(n,d,L) , \mathbb{Q})$ whose restriction $r(\alpha)$ to $F_a$ is a K\"ahler class, hence non-zero. Thus $r(\alpha)$ must be some non-zero rational multiple of $\omega$. It follows that the image of $r$ is precisely the span of $1,\omega , \omega^2 , \dots , \omega^u$.
\end{proof}

\begin{remark}
When $n$ and $d$ are coprime and $L=K$, Theorem \ref{thmrestriction} may also be proved as follows. By Theorem 7 of \cite{mar}, the cohomology ring of $\mathcal{M}(n,d,K)$ is generated by the K\"unneth factors of the Chern classes of the universal $PGL(n,\mathbb{C})$-Higgs bundle on $\mathcal{M}(n,d,K) \times \Sigma$. Then by a generalisation of the proof of Proposition 5.1.2 in \cite{chm}, one can determine the image of the generators on restriction to a non-singular fibre. The advantage of our proof is that it applies without restriction on the values of $n$ and $d$ and does not require knowing a set of generators for the cohomology of $\mathcal{M}(n,d,L)$.
\end{remark}

%%%%%%%%%%%%%%%%%%%%%%%%%%%%%%%%%%%%%%%%%%%%%%%%%
%%%%%%%%%%%%%%%%%%%%%%%%%%%%%%%%%%%%%%%%%%%%%%%%%
%%%%%%%%%%%%%%%%%%%%%%%%%%%%%%%%%%%%%%%%%%%%%%%%%
%%%%%%%%%%%%%%%%%%%%%%%%%%%%%%%%%%%%%%%%%%%%%%%%%
\bibliographystyle{amsplain}

\end{document}